\numberwithin{equation}{section}
\newtheorem{Theorem}{Theorem}[section]
\newtheorem{Lemma}{Lemma}[section]
\newtheorem{Proposition}{Proposition}[section]
\theoremstyle{definition}
\newtheorem{Definition}{Definition}[section]
\newtheorem{Remark}{Remark}[section]
\renewcommand{\r}{\rho}
\def\i{\varepsilon}
\renewcommand{\u}{{\bf u}}
\newcommand{\R}{{\mathbb R}}
\newcommand{\Dv}{{\rm div}}
\newcommand{\dl}{\delta}
\def\f{\frac}
\renewcommand{\O}{\omega}
\def\ov{\overline}
\def\D{\Delta }
\def\hf1{^\f{1}{1-\xi^2}}
\def\be{\begin{equation}}
\def\ee{\end{equation}}
\newcommand{\F}{{\mathtt F}}
\author{Xianpeng Hu and Dehua Wang}
\address{Department of Mathematics, University of Pittsburgh,
                           Pittsburgh, PA 15260, USA.}
\email{xih15@pitt.edu}
\address{Department of Mathematics, University of Pittsburgh,
                           Pittsburgh, PA 15260, USA.}
\email{dwang@math.pitt.edu}
\title[Compressible viscoelastic flows]
{Global Existence  for the Multi-Dimensional Compressible
Viscoelastic flows}
\keywords{Compressible viscoelastic flows, Besov spaces, global
existence}
\subjclass{35Q36, 35D05, 76W05.}
\date{April 30, 2010}
\begin{document}

\begin{abstract}
The global solutions in critical spaces to the multi-dimensional
compressible  viscoelastic flows are considered. The global
existence of the Cauchy problem with initial data close to an
equilibrium state is established in Besov spaces. Using uniform
estimates for a hyperbolic-parabolic linear system with convection
terms, we prove the global existence in the Besov space which is
invariant with respect to the {scaling} of the associated
equations. Several important estimates are achieved, including a
smoothing effect on the velocity, and the $L^1-$decay  of the
density and  deformation gradient.
\end{abstract}

\maketitle

\section{Introduction}
We consider the following equations of multi-dimensional
compressible viscoelastic flows \cite{CD, Gurtin, LZ, RHN}:
\begin{subequations} \label{e1e}
\begin{align}
&\widehat{\r}_t +\Dv(\widehat{\r}\widehat{\u})=0,\label{e1e1}\\
&(\widehat{\r}\widehat{\u})_t+\Dv\left(\widehat{\r}\widehat{\u}\otimes\widehat{\u}\right)-\mu\D \widehat{\u}-(\lambda+\mu)\nabla\Dv\widehat{\u}
+\nabla P(\widehat{\r})=\alpha\Dv(\widehat{\r}\, \F\,\F^\top),\label{e1e2}\\
&\F_t+\widehat{\u}\cdot\nabla\F=\nabla\widehat{\u} \,
\F,\label{e1e3}
\end{align}
\end{subequations}
where $\widehat{\r}$ stands for the density, $\widehat{\u}\in
\R^N$ $(N=2,3)$ the velocity, and $\F\in M^{N\times N}$ (the set
of $N\times N$ matrices)  the deformation gradient. The viscosity
coefficients $\mu, \lambda$ are two constants satisfying $\mu>0,
2\mu+N\lambda>0$, which ensures that the operator $-\mu\D
\widehat{\u}-(\lambda+\mu)\nabla\Dv\widehat{\u}$ is a strongly
elliptic operator. The pressure term $P(\widehat{\r})$ is an
increasing and convex function of $\widehat{\rho}$ for
$\widehat{\rho}>0$. The symbol $\otimes$ denotes the Kronecker
tensor product, $\F^\top$ means the transpose matrix of $\F$, and
the notation $\widehat{\u}\cdot\nabla\F$ is understood to be
$(\widehat{\u}\cdot\nabla)\F$.
%As usual, we call equation \eqref{e1e1} the continuity equation.
For system \eqref{e1e}, the
corresponding elastic energy is chosen to be  the special form of
the Hookean linear elasticity:
$$W(\F)=\frac{\alpha}{2}|\F|^2,\quad \alpha>0,$$
which, however, does not reduce the essential difficulties for
analysis. The methods and results of this paper can be applied to
more general cases.

In this paper, we consider the Cauchy problem of system \eqref{e1e} subject to the initial condition:
\begin{equation}\label{IC1}
(\widehat{\r}, \widehat{\u}, \F)|_{t=0}=(\widehat{\r}_0(x),
\widehat{\u}_0(x), \F_0(x)), \quad x\in\R^N,
\end{equation}
and we are  interested  in the global existence and uniqueness of
strong solution to the initial-value problem
\eqref{e1e}-\eqref{IC1} near its equilibrium state in the
multi-dimensional space $\R^N$. Here the {equilibrium state} of
the system \eqref{e1e} is defined as: $\widehat{\r}$ is a positive
constant (for simplicity, $\widehat{\r}=1$), $\widehat{\u}=0$, and
$\F=I$ (the identity matrix in $M^{3\times 3}$). We introduce a
new unknown variable $E$ by setting
$$\F=I+E.$$ Then, \eqref{e1e} becomes
\begin{subequations} \label{e1}
\begin{align}
&\widehat{\r}_t +\Dv(\widehat{\r}\widehat{\u})=0,\label{e11}\\
&(\widehat{\r}\widehat{\u})_t+\Dv\left(\widehat{\r}\widehat{\u}\otimes\widehat{\u}\right)-\mu\D \widehat{\u}-(\mu+\lambda)\nabla\Dv\widehat{\u}
+\nabla P(\widehat{\r})=\alpha\Dv(\widehat{\r} (I+E)(I+E)^\top),\label{e12}\\
&E_t+\widehat{\u}\cdot\nabla E=\nabla\widehat{\u}
E+\nabla\widehat{\u},\label{e13}
\end{align}
\end{subequations}
with the initial data
\begin{equation}\label{IC}
(\widehat{\r}, \widehat{\u}, E)|_{t=0}=(\widehat{\r}_0(x),
\widehat{\u}_0(x), E_0(x)), \quad x\in\R^N.
\end{equation}

There have been some results about the local existence of strong
solutions to the compressible viscoelastic flows, see \cite{HW,
LZ} and the references therein. The global existence to
\eqref{e1e} is a difficult problem due to the appearance of the
deformation gradient. The challenge is to identify an appropriate
functional space where the Cauchy problem \eqref{e1e}-\eqref{IC1}
is well-posed globally in time. In this paper, to construct a
global solution, we are going to use the scaling for the
compressible  viscoelastic flow to guess which space may be
critical. We observe that system \eqref{e1e} is invariant under
the transformation
$$(\widehat{\r}_0(x), \widehat{\u}_0(x), \F_0(x))\rightarrow (\widehat{\r}_0(lx), l\widehat{\u}_0(lx),
\F_0(lx)),$$
$$(\widehat{\r}(t,x), \widehat{\u}(t,x), \F(t,x))\rightarrow (\widehat{\r}(l^2t, lx),
l\widehat{\u}(l^2t,lx), \F(l^2t, lx)),$$ up to changes of the
pressure law $P$ into $l^2P$, and $\alpha$ into $l^2\alpha$. This
suggests the following definition:
%\begin{Definition}
\textit{A functional space $\mathfrak{A}\subset
\mathcal{S}'(\R^N)\times(\mathcal{S}'(\R^N))^N\times(\mathcal{S}'(\R^N))^{N\times
N}$ is called a critical space if the associated norm is invariant
under the transformation $(\r, \u, \F)\rightarrow (\r(l\cdot),
l\u(l\cdot), \F(l\cdot))$ (up to a constant independent of $l$),
where $\mathcal{S}'$ is the space of tempered distributions, i.e.,
the dual of the Schwartz space $\mathcal{S}$.}
%\end{Definition}
According to this definition,
$B^{\f{N}{2}}\times(B^{\f{N}{2}-1})^N\times B^{\f{N}{2}}$ (see
Section 2 for the definition of
$B^s:=\dot{B}^s_{2,1}(\R^N)$) is a critical space.
The motivations to use the homogeneous Besov space $B^s$ with the
derivative index $\f{N}{2}$ include two points: first,
$B^{\f{N}{2}}$ is an algebra embedded in $L^\infty$, which allows
us to control the density and the deformation gradient from below
and from above without requiring more regularity on derivatives of
$\widehat{\r}$ and $\F$; second, the product is continuous from
$B^{\f{N}{2}-\alpha}\times B^{\f{N}{2}}$ to $B^{\f{N}{2}-\alpha}$
for $0\le\alpha<N$.

For the global existence, the hardest part of the argument is to
deal with the linear terms $\nabla\widehat{\r}$, $\Dv E$ and
$\nabla\widehat{\u}$, especially the first two terms. It turns out
that finding some dissipation for $\Dv E$ is a crucial step. This
step for the incompressible case has been fulfilled successfully
in \cite{LLZ}. For the compressible system \eqref{e1e}, we will
reformulate the system, and use the divergence-free property of
compressible viscoelastic flows for the ``compressible'' part of
the velocity, while the property on curl is used to deal with the
``incompressible'' part of the velocity. Meanwhile, we decompose
the deformation gradient into two parts: the symmetric part and
the antisymmetric part. With this technique and decomposition, we
will be able to obtain successfully the dissipation estimates on
the density and the deformation gradient for an auxiliary system
with convection terms. These estimates are crucial for the global
existence. We remark that for the global existence of solutions to
\eqref{e1e} near equilibrium, the intrinsic properties of the
divergence and the curl are important and necessary. During the
final stage of this paper, we noticed that some similar results
are also obtained independently in \cite{QZ}, where
 the intrinsic properties of the divergence and curl of viscoelastic flows (see Appendix) to system \eqref{e1e} are used to control the dissipation of the deformation gradient $\F$.

For the incompressible viscoelastic flows and related models, there are many papers in literature on classical solutions (cf. \cite{CM, CZ, KP, LLZH2,
LZP} and the references therein). On the other hand, the global existence of weak solutions
to the incompressible viscoelastic flows with large initial data
is still an outstanding open question, although there are some
progress in that direction (\cite{LLZH, LM, LW}).
For the well-posedness of global solutions to the compressible
Navier-Stokes equations, see \cite{RD3, RD1} (for barotropic
cases), and \cite{RD2} (for barotropic cases with heat
conduction). For the inviscid elastodynamics, see \cite{ST} and
their references on the global existence of classical solutions.

The rest of this paper is organized as follows. In Section 2, we
review the definitions of Besov spaces and  show
some good property of the Besov spaces. In Section 3, we
reformulate the system \eqref{e1e} and state the main theorem.
Section 4 is devoted to \textrm{a priori} estimates for an
auxiliary linear system with convection terms. In Section 5, we give the
proof of our main result, while in the appendix (Section 6), we prove two
intrinsic properties of compressible viscoelastic flows.

\bigskip

\section{Basic Properties of Besov Spaces}

Throughout this paper,  we use $C$ for a generic constant, and
denote  $A\le CB$ by  $A\lesssim B$. The notation $A\thickapprox
B$ means that $A\lesssim B$ and $B\lesssim A$. Also we use
$(\alpha_q)_{q\in\mathbb{Z}}$ to denote a sequence such that
$\sum_{q\in\mathbb{Z}}\alpha_q\le 1$. $(f|g)$ denotes the inner
product of two functions $f, g$ in $L^2(\R^N)$. The standard
summation notation over the repeated index is adopted in this
paper.

The definition of homogeneous Besov spaces is built on an
homogeneous Littlewood-Paley decomposition. First, we introduce a
function $\psi\in C^\infty(\R^N)$, supported in the shell
$$\mathcal{C}=\{\xi\in\R^N: \f{5}{6}\le|\xi|\le\f{12}{5}\},$$
 such that
$$\sum_{q\in\mathbb{Z}}\psi(2^{-q}\xi)=1,\textrm{ if }\xi\neq 0.$$
Denoting  by $h:=\mathcal{F}^{-1}\psi$ the inverse Fourier transform  of $\psi$, we define the dyadic blocks as follows:
$$\D_q f=\psi(2^{-q}D)f=2^{qN}\int_{\R^N}h(2^qy)f(x-y)dy,$$
and
$$S_q f=\sum_{p\le q-1}\D_pf,$$
where $D$ is the first order differential operator. The formal
decomposition
\begin{equation}\label{21}
f=\sum_{q\in\mathbb{Z}}\D_qf
\end{equation}
is called the homogeneous Littlewood-Paley decomposition. But
unfortunately, the above identity is not always true in
$\mathcal{S}'(\R^N)$ as pointed out in \cite{RD1}. Nevertheless,
\eqref{21} is true modulo polynimials (see \cite{CH, RD, JP}).
\iffalse%%%%%%%
\begin{itemize}
\item The right-hand side does not necessarily converge in
$\mathcal{S}'(\R^N)$;
\item Even if it does, the equality is not always true in
$\mathcal{S}'(\R^N)$.
\end{itemize}
Furthermore, the above dyadic decomposition has nice properties of
quasi-orthogonality: with our assumption on the support of $\psi$,
we have
\begin{equation}\label{22}
\D_p\D_qf=0\quad\textrm{if}\quad |p-q|\ge 2
\end{equation}
and
\begin{equation}\label{23}
\D_p(S_{q-1}f\D_qg)=0\quad\textrm{if}\quad|p-q|\ge 4.
\end{equation}
\fi%%%%%%%%%

For $s\in\R$ and $f\in \mathcal{S}'(\R^N)$, we denote
%$$\|f\|_{B^s}:=\sum_{q\in\mathbb{Z}}2^{sq}\|\D_qf\|_{L^2}.$$
$$\|f\|_{B^s}:=\sum_{q\in\mathbb{Z}}2^{sq}\|\D_qf\|_{L^2}.$$
Notice that $\|\cdot\|_{B^s}$ is only a semi-norm on
$\{f\in\mathcal{S}'(\R^N):\|f\|_{B^s}<\infty\}$, because
$\|f\|_{B^s}$ vanishes if and only if $f$ is a polynomial. This
leads us to introduce the following definition for homogeneous
Besov spaces:
\begin{Definition}
Let $s\in\R$ and $m=-\left[\f{N}{2}+1-s\right]$. If $m<0$, we set
$$B^s=\left\{f\in \mathcal{S}'(\R^N):\|f\|_{B^s}<\infty\textrm{
and }f=\sum_{q\in\mathbb{Z}}\D_qf\textrm{ in
}\mathcal{S}'(\R^N)\right\}.$$ If $m\ge 0$, we denote by
$\mathcal{P}_m$ the set of  polynomials with $N$ variables of degree
$\le m$ and define
$$B^s=\left\{f\in \mathcal{S}'(\R^N)/\mathcal{P}_m:\|f\|_{B^s}<\infty\textrm{
and }f=\sum_{q\in\mathbb{Z}}\D_qf\textrm{ in
}\mathcal{S}'(\R^N)/\mathcal{P}_m\right\}.$$
\end{Definition}

\iffalse %%%%
The definition of $B^s$ does not depend on the choice of
the Littlewood-Paley decomposition. Indeed, the following lemma
holds true (see Lemma 2.4 in \cite{RD1}).
\begin{Lemma}\label{l1}
Let$f\in B^s$ and $\phi\in C_0^\infty$ with support in the annulus
$C(0, R_1,R_2)$. Then there exists a sequence
$(c_q)_{q\in\mathbb{Z}}$ such that $\sum_{q}c_a\le 1$ and
$$\|\phi(2^{-q}D)f\|_{L^2}\lesssim c_q2^{-qs}\|f\|_{B^s}\textrm{
for all }q\in\mathbb{Z}.$$ Conversely, suppose that
$f=\sum_{q}f_q$ in $\mathcal{S}'(\R^N)$ (or in
$\mathcal{S}'(\R^N)/\mathcal{P}_m$ when
$m=\left[\f{N}{2}+1-s\right]$ is non-negative) with
Supp$\mathcal{F}(f_q)\subset 2^qC(0,R_1,R_2)$, and that
$$\sum_{q\in\mathbb{Z}}2^{qs}\|f_q\|_{L^2}=K<\infty.$$
Then $f\in B^s$ and $\|f\|_{B^s}\le K$.
\end{Lemma}
\fi%%%%%%

Functions in $B^s$ have many good properties (see Proposition 2.5
in \cite{RD1}):
\begin{Proposition}\label{p3}
The following properties hold:
\begin{itemize}
\item Density: the set $C_0^\infty$ is dense in $B^s$ if $|s|\le
\f{N}{2}$;
\item Derivation: $\|f\|_{B^s}\thickapprox \|\nabla f\|_{B^{s-1}}$;
\item Fractional derivation: let $\Gamma=\sqrt{-\D}$ and
$\sigma\in\R$; then the operator $\Gamma^\sigma$ is an isomorphism
from $B^s$ to $B^{s-\sigma}$;
\item Algebraic properties: for $s>0$, $B^s\cap L^\infty$ is an
algebra;
\item Interpolation: $(B^{s_1}, B^{s_2})_{\theta,1}=B^{\theta
s_1+(1-\theta)s_2}$.
\end{itemize}
\end{Proposition}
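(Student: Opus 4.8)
All five assertions are classical facts of homogeneous Littlewood--Paley theory, and I would verify them one at a time; the only point that I expect to require genuine care is the density statement. The recurring tools are two. First, \emph{Bernstein-type estimates}: since $\D_q f=2^{qN}\int_{\R^N}h(2^qy)f(\x-y)\,dy$ is a convolution against an $L^1$-normalised kernel, one has for any multiplier $\va\in C_0^\infty$ supported in a fixed annulus that $\|\va(2^{-q}D)g\|_{L^2}\lesssim\|g\|_{L^2}$, and, because $\psi$ is supported where $\f56\le|\xi|\le\f{12}5$, one also gets the two-sided bound $\|\na\D_qf\|_{L^2}\thickapprox 2^q\|\D_qf\|_{L^2}$. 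Second, the \emph{quasi-orthogonality} relations $\D_p\D_qf=0$ for $|p-q|\ge2$ and $\D_p(S_{q-1}f\,\D_qg)=0$ for $|p-q|\ge4$, both immediate from the support of $\psi$.

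\textbf{Derivation and fractional derivation.} Applying the two-sided Bernstein bound to $\D_q f$, multiplying by $2^{(s-1)q}$ and summing over $q\in\mathbb Z$ gives $\|\na f\|_{B^{s-1}}\thickapprox\|f\|_{B^s}$. For $\Gamma^\sigma=(\sqrt{-\D})^\sigma$, write $|\xi|^\sigma\psi(2^{-q}\xi)=2^{q\sigma}\widetilde\psi(2^{-q}\xi)$ with $\widetilde\psi(\xi):=|\xi|^\sigma\psi(\xi)\in C_0^\infty(\mathcal C)$; then $\D_q\Gamma^\sigma f=2^{q\sigma}\widetilde\psi(2^{-q}D)\widetilde\D_qf$ with $\widetilde\D_q:=\D_{q-1}+\D_q+\D_{q+1}$, and the first Bernstein estimate together with quasi-orthogonality yields $\|\D_q\Gamma^\sigma f\|_{L^2}\thickapprox 2^{q\sigma}\|\D_qf\|_{L^2}$, whence $\|\Gamma^\sigma f\|_{B^{s-\sigma}}\thickapprox\|f\|_{B^s}$. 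Since $\Gamma^{-\sigma}$ is the inverse map and obeys the same estimate, $\Gamma^\sigma\colon B^s\to B^{s-\sigma}$ is an isomorphism.

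\textbf{Algebra property and interpolation.} For the algebra property I would use Bony's decomposition $fg=T_fg+T_gf+R(f,g)$, with $T_fg:=\sum_q S_{q-1}f\,\D_qg$ and $R(f,g):=\sum_q\D_qf\,\widetilde\D_qg$. By quasi-orthogonality $\D_p(S_{q-1}f\,\D_qg)$ is nonzero only for $|p-q|\le3$ and is bounded in $L^2$ by $\|f\|_{L^\infty}\|\D_qg\|_{L^2}$; the weighted summation then gives $\|T_fg\|_{B^s}\lesssim\|f\|_{L^\infty}\|g\|_{B^s}$ and, symmetrically, $\|T_gf\|_{B^s}\lesssim\|g\|_{L^\infty}\|f\|_{B^s}$, for every $s\in\R$. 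For the remainder, $\D_p(\D_qf\,\widetilde\D_qg)$ vanishes unless $q\ge p-N_0$ for a fixed $N_0$, so that $2^{sp}\|\D_pR(f,g)\|_{L^2}\lesssim\|g\|_{L^\infty}\sum_{q\ge p-N_0}2^{s(p-q)}\,2^{sq}\|\D_qf\|_{L^2}$; summing in $p$ and exchanging the order of summation, the geometric factor $\sum_{j\le N_0}2^{sj}$ is finite precisely when $s>0$, giving $\|R(f,g)\|_{B^s}\lesssim\|f\|_{B^s}\|g\|_{L^\infty}+\|f\|_{L^\infty}\|g\|_{B^s}$; adding the three pieces proves $B^s\cap L^\infty$ is an algebra for $s>0$. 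For interpolation, the map $f\mapsto(\D_qf)_{q\in\mathbb Z}$ exhibits $B^s$ as a retract of the weighted sequence space $\ell^1(\mathbb Z;2^{sq}L^2)$, with bounded reconstruction $(f_q)_q\mapsto\sum_q\widetilde\D_qf_q$; a direct computation of the $K$-functional gives $(\ell^1(\mathbb Z;2^{s_1q}L^2),\ell^1(\mathbb Z;2^{s_2q}L^2))_{\theta,1}=\ell^1(\mathbb Z;2^{(\theta s_1+(1-\theta)s_2)q}L^2)$ for $s_1\ne s_2$, and the retract lemma for real interpolation transports this identity to $(B^{s_1},B^{s_2})_{\theta,1}=B^{\theta s_1+(1-\theta)s_2}$.

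\textbf{Density, and the expected obstacle.} Given $f\in B^s$ with $f=\sum_q\D_qf$, the symmetric partial sums $f_Q:=\sum_{|q|<Q}\D_qf$ converge to $f$ in $B^s$ since the tail $\sum_{|q|\ge Q}2^{sq}\|\D_qf\|_{L^2}\to0$; each $f_Q$ is band-limited, hence $C^\infty$, but need not decay, so one further approximates it by $\chi(\cdot/R)f_Q$ with $\chi\in C_0^\infty$, $\chi\equiv1$ near the origin, and lets $R\to\infty$. The step where the hypothesis $|s|\le\f N2$ enters — and the one I expect to be the real obstacle — is controlling $\|(1-\chi(\cdot/R))f_Q\|_{B^s}\to0$: multiplication by a bounded smooth cutoff must act boundedly on $B^s$ and the truncation error must vanish, which in the \emph{homogeneous} setting (where the low-frequency contribution is not absorbed) requires exactly the embeddings available in the range $|s|\le\f N2$. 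Everything else above is a mechanical application of Bernstein's inequalities, quasi-orthogonality, and Bony's paraproduct calculus.
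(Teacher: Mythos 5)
The paper never proves this proposition: it is quoted verbatim from Proposition 2.5 of the cited reference of Danchin, so there is no in-paper argument to compare yours against. Your reconstruction of the derivation, fractional derivation, algebra and interpolation items is the standard one from that literature (two-sided Bernstein estimates on each dyadic annulus, Bony's decomposition together with the quasi-orthogonality of the blocks, and the retract of $B^s$ onto the weighted sequence space $\ell^1(\mathbb{Z};2^{qs}L^2)$), and those four arguments are sound as sketched; in particular your bookkeeping of where $s>0$ enters in the remainder $R(f,g)$, and the observation that $\widetilde{\D}_q\D_q=\D_q$ makes the reconstruction map a left inverse in the retract argument, are correct.

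The one genuine gap is the density statement, which you flag but do not close. The reduction to partial sums $f_Q=\sum_{|q|<Q}\D_qf$ is fine (note that for $s\le\f{N}{2}$ one has $m=-\left[\f{N}{2}+1-s\right]<0$, so $B^s$ is defined with the convergence $f=\sum_q\D_qf$ built in and no quotient by polynomials intervenes). What is missing is the actual proof that $\|(1-\chi(\cdot/R))f_Q\|_{B^s}\to0$. The mid- and high-frequency part of this norm is controlled by $\|(1-\chi(\cdot/R))f_Q\|_{H^K}\to0$ for $K$ large, since $f_Q$ is band-limited and lies in every Sobolev space; the real difficulty is the low-frequency tail $\sum_{q\le -Q'}2^{qs}\|\D_q\bigl((1-\chi(\cdot/R))f_Q\bigr)\|_{L^2}$, because the cutoff destroys the spectral localization of $f_Q$. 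For $s>0$ this tail is summable against the crude bound $\|\D_qh\|_{L^2}\le\|h\|_{L^2}$; for $s\le 0$ one must invoke the $L^p\to L^2$ Bernstein inequality $\|\D_qh\|_{L^2}\lesssim 2^{qN(\f{1}{p}-\f{1}{2})}\|h\|_{L^p}$ with $p<2$ chosen so that $s+N(\f{1}{p}-\f{1}{2})>0$, which is possible precisely when $s>-\f{N}{2}$, and one must also verify that the cutoff error actually lies in such an $L^p$; the endpoint $s=-\f{N}{2}$ requires a further refinement. So the hypothesis $|s|\le\f{N}{2}$ enters exactly where you predicted, but the estimate that uses it is never carried out, and as written the density item remains unproved.
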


For the composition in $B^s$, we refer to \cite{RD} for the proof
of the following estimates:
\begin{Lemma}\label{l11}
Given $s>0$ and $f\in L^\infty\cap B^s$.
\begin{itemize}
\item Let $\Psi\in W^{[s]+2}_{loc}(\R^N)$ such that $\Psi(0)=0$.
Then $\Psi(f)\in B^s$. Moreover, there exists a function $C$ of
one variable depending only on $s$, $N$ and $\Psi$, and such that
$$\|\Psi(f)\|_{B^s}\le C(\|f\|_{L^\infty})\|f\|_{B^s}.$$
\item Let $\Phi\in W^{[s]+2}_{loc}(\R^N)$ such that $\Phi'(0)=0$.
Suppose that $f$ and $g$ belong to $B^{\f{N}{2}}$ and that
$(f-g)\in B^s$ for some $s\in(-\f{N}{2},\f{N}{2}]$. Then
$\Phi(f)-\Phi(g)$ belongs to $B^s$ and there exists a function of
two variables C depending only on $s,N$ and $\Phi$, and such that
$$\|\Phi(f)-\Phi(g)\|_{B^s}\le C(\|f\|_{L^\infty},
\|g\|_{L^\infty})\left(\|f\|_{B^{\f{N}{2}}}+\|g\|_{B^{\f{N}{2}}}\right)\|f-g\|_{B^s}.$$
\end{itemize}
\end{Lemma}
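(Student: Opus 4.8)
The plan is to reduce both assertions to Meyer's linearization argument --- writing a nonlinear function of $f$ as a telescoping series of dyadic increments, each of which is a pointwise multiplier acting on a single block $\D_q f$ --- and then to invoke the continuity of the product on Besov spaces recalled in the Introduction.

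For the first item, since $\Psi(0)=0$ one writes
$$\Psi(f)=\sum_{q\in\mathbb Z}\bigl(\Psi(S_{q+1}f)-\Psi(S_q f)\bigr),$$
and the fundamental theorem of calculus gives
$$\Psi(S_{q+1}f)-\Psi(S_q f)=m_q\,\D_q f,\qquad m_q:=\int_0^1\Psi'\bigl(S_q f+t\,\D_q f\bigr)\,dt.$$
Three facts then drive the estimate. First, $\|S_q f\|_{L^\infty}+\|\D_q f\|_{L^\infty}\lesssim\|f\|_{L^\infty}$, so $\|m_q\|_{L^\infty}\le\sup_{|y|\le C_0\|f\|_{L^\infty}}|\Psi'(y)|=:C(\|f\|_{L^\infty})$, which only uses the continuity of $\Psi'$. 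Second, each increment $m_q\,\D_q f$ is \emph{essentially} frequency-localized near $2^q$: it is not exactly so, since $\Psi'$ of a band-limited function need not be band-limited, but the frequency leakage of $m_q$ is controlled through the derivatives of $\Psi$ by Bernstein's inequality together with the Leibniz and chain rules --- splitting $m_q$ into a low-frequency piece (so that its product with $\D_q f$ sits in an annulus $\sim 2^q$) plus a small remainder makes this precise, and this is where all $[s]+2$ derivatives of $\Psi$ are spent. Third, since
$$\sum_{q\in\mathbb Z}2^{qs}\|m_q\,\D_q f\|_{L^2}\le C(\|f\|_{L^\infty})\sum_{q\in\mathbb Z}2^{qs}\|\D_q f\|_{L^2}=C(\|f\|_{L^\infty})\,\|f\|_{B^s}<\infty,$$
one can reconstruct the $B^s$-norm from the (essentially annulus-supported) series $\sum_q m_q\,\D_q f$ --- and it is here that the hypothesis $s>0$ is used --- obtaining $\Psi(f)\in B^s$ with the asserted bound.

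For the second item, one first factors
$$\Phi(f)-\Phi(g)=(f-g)\,G,\qquad G:=\int_0^1\Phi'\bigl((1-\tau)g+\tau f\bigr)\,d\tau .$$
Since $f,g\in B^{\f{N}{2}}$ and $B^{\f{N}{2}}\hookrightarrow L^\infty$, the convex combination $(1-\tau)g+\tau f$ lies in $B^{\f{N}{2}}$ with norm at most $\|f\|_{B^{\f{N}{2}}}+\|g\|_{B^{\f{N}{2}}}$; because $\Phi'(0)=0$, the first item applied with $\Psi=\Phi'$ at smoothness index $\f{N}{2}$, followed by Minkowski's inequality in $\tau$, yields
$$\|G\|_{B^{\f{N}{2}}}\le C\bigl(\|f\|_{L^\infty},\|g\|_{L^\infty}\bigr)\bigl(\|f\|_{B^{\f{N}{2}}}+\|g\|_{B^{\f{N}{2}}}\bigr).$$
As $f-g\in B^s$ with $s\in(-\f{N}{2},\f{N}{2}]$, the continuity of the product from $B^s\times B^{\f{N}{2}}$ into $B^s$ (the case $\alpha=\f{N}{2}-s$ of the mapping property quoted in the Introduction) then gives
$$\|\Phi(f)-\Phi(g)\|_{B^s}\lesssim\|f-g\|_{B^s}\,\|G\|_{B^{\f{N}{2}}},$$
which is the desired inequality.

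I expect the main obstacle to be the analytic core of the first item rather than the algebra above: quantifying the frequency leakage of the multipliers $m_q$ and then reconstructing the $B^s$-norm, which requires the Bernstein and quasi-orthogonality estimates and a careful count of derivatives, cleanly organized by induction on $[s]$; for this we follow \cite{RD}. A further subtlety is that the quick factorization in the second item spends slightly more smoothness than $W^{[s]+2}_{loc}$ when $s<\f{N}{2}$, so to obtain the sharp hypothesis one instead telescopes the difference directly, $\Phi(f)-\Phi(g)=\sum_{p}\bigl(m_p^f\,\D_p(f-g)+(m_p^f-m_p^g)\,\D_p g\bigr)$, expands $m_p^f-m_p^g$ by one more use of the fundamental theorem of calculus (introducing $\Phi''$), and bounds the two pieces by paraproduct-type estimates --- again following \cite{RD}.
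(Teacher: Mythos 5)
The paper offers no proof of this lemma at all --- it simply defers to \cite{RD} --- and your sketch is precisely the Meyer first-linearization argument given there: telescoping over $S_q f$, bounding the multipliers $m_q=\int_0^1\Psi'(S_qf+t\D_qf)\,dt$ in $L^\infty$, controlling their frequency leakage by spending the $[s]+2$ derivatives of $\Psi$ through Bernstein and the chain rule, and resumming the quasi-localized series using $s>0$. Your treatment of the second item is also the right one, including the correct observation that the quick factorization $\Phi(f)-\Phi(g)=(f-g)\int_0^1\Phi'((1-\tau)g+\tau f)\,d\tau$ costs an extra derivative of $\Phi$ (in fact it does so even at $s=\f{N}{2}$, not only for $s<\f{N}{2}$) and must be replaced by a direct telescoping of the difference to reach the stated $W^{[s]+2}_{loc}$ hypothesis.
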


But, different from the nonhomogeneous Besov space, the
homogeneous Besov spaces fail to have nice inclusion properties.
For example, owing to the low frequencies, the inclusion
$B^s\hookrightarrow B^r$ does not hold for $s>r$. Still, the
functions of $B^s$ are locally more regular than those of $B^r$:
for any $\varphi\in C_0^\infty$ and $f\in B^s$, the function
$\varphi f$ is in $B^r$. This motivates the definition of
\textit{hybrid Besov spaces} where the growth conditions satisfied
by the dyadic blocks are not the same for low and high
frequencies. Let us recall that using hybrid Besov spaces has been
crucial for proving global well-posedness for compressible gases
in critical spaces (see \cite{RD1, RD2}).
The definition of the hybird Besov space is given as follows (see
Definition 2.8 in \cite{RD1} or \cite{RD2}).
\begin{Definition}
Let $s,t\in \R$. We set
$$\|f\|_{\tilde{B}^{s,t}}=\sum_{q\le
0}2^{qs}\|\D_qf\|_{L^2}+\sum_{q>0}2^{qt}\|\D_qf\|_{L^2}.$$
Denoting $m=-\left[\f{N}{2}+1-s\right]$, we define
$$\tilde{B}^{s,t}=\left\{f\in\mathcal{S}'(\R^N):   \|f\|_{\tilde{B}^{s,t}}<\infty\right\}\textrm{
if }m<0,$$
$$\tilde{B}^{s,t}=\left\{f\in\mathcal{S}'(\R^N)/\mathcal{P}_m:   \|f\|_{\tilde{B}^{s,t}}<\infty\right\}\textrm{
if }m\ge 0,$$
\end{Definition}

\begin{Remark}
Some remarks about the hybrid Besov spaces are in order:
\begin{itemize}
\item $\tilde{B}^{s,s}=B^s$;
\item If $s\le t$, then $\tilde{B}^{s,t}=B^s\cap B^t$. Otherwise,
$\tilde{B}^{s,t}=B^s+B^t$. In particular,
$\tilde{B}^{s,\f{N}{2}}\hookrightarrow L^\infty$ as $s\le
\f{N}{2}$;
\item The space $\tilde{B}^{0,s}$ coincides with the usual
nonhomogeneous Besov space
$$\left\{f\in\mathcal{S}'(\R^N):   \|\chi(D)f\|_{L^2}+\sum_{q\ge
0}2^{qs}\|\D_qf\|_{L^2}<\infty\right\}, \;\text{where}\;
\chi(\xi)=1-\sum_{q\ge 0}\phi(2^{-q}\xi);$$
\item If $s_1\le s_2$ and $t_1\ge t_2$, then
$\tilde{B}^{s_1,t_1}\hookrightarrow\tilde{B}^{s_2,t_2}$.
\end{itemize}
\end{Remark}
For  products of functions in hybrid Besov spaces, we have (see Proposition
2.10 in \cite{RD1}):
\begin{Proposition}\label{p2}
Given $s_1, s_2, t_1, t_2 \in \R$.
\begin{itemize}
\item For all $s_1, s_2>0$,
$$\|fg\|_{\tilde{B}^{s_1,s_2}}\lesssim
\|f\|_{L^\infty}\|g\|_{\tilde{B}^{s_1,s_2}}+\|g\|_{L^\infty}\|f\|_{\tilde{B}^{s_1,s_2}}.$$
\item
For all $s_1, s_2\le\f{N}{2}$ such that $\min\{s_1+t_1,
s_2+t_2\}>0$,
$$\|fg\|_{\tilde{B}^{s_1+s_2-\f{N}{2},t_1+t_2-\f{N}{2}}}\lesssim
\|f\|_{\tilde{B}^{s_1,s_2}}\|g\|_{\tilde{B}^{t_1,t_2}}.$$
\end{itemize}
\end{Proposition}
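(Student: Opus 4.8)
The plan is to use Bony's paraproduct decomposition. Writing $fg=T_fg+T_gf+R(f,g)$, where $T_fg=\sum_q S_{q-1}f\,\Delta_q g$ is the paraproduct and $R(f,g)=\sum_q\Delta_q f\,\widetilde{\Delta}_q g$ with $\widetilde{\Delta}_q:=\Delta_{q-1}+\Delta_q+\Delta_{q+1}$ is the remainder, one reduces the problem to estimating each of the three pieces in the hybrid norm. Two spectral-localization facts are used throughout: each summand $S_{q-1}f\,\Delta_q g$ has Fourier support in a dyadic annulus of size $2^q$, so $\Delta_p(S_{q-1}f\,\Delta_q g)=0$ unless $|p-q|\le 4$; and each summand $\Delta_q f\,\widetilde{\Delta}_q g$ has Fourier support in a ball of radius $\lesssim 2^q$, so $\Delta_p R(f,g)$ only collects the terms with $q\ge p-N_0$ for a fixed integer $N_0$. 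Combined with the Bernstein inequalities $\|S_{q-1}u\|_{L^\infty}\lesssim\|u\|_{L^\infty}$ and $\|\Delta_q u\|_{L^\infty}\lesssim 2^{qN/2}\|\Delta_q u\|_{L^2}$, this turns every piece into a weighted $\ell^1$ sum of $L^2$ norms of dyadic blocks of $f$ and $g$, which is precisely what the hybrid norms control, provided a few geometric series converge.

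For the first (tame) estimate I would handle the three pieces separately. For $T_fg$ one has $\|\Delta_p T_fg\|_{L^2}\lesssim\|f\|_{L^\infty}\sum_{|q-p|\le 4}\|\Delta_q g\|_{L^2}$; multiplying by $2^{ps_1}$ and summing over $p\le 0$, then by $2^{ps_2}$ and summing over $p>0$, and using that $2^{ps_i}\approx 2^{qs_i}$ (and $2^{ps_1}\approx 2^{ps_2}$ for the finitely many indices near $p=0$) when $|p-q|\le 4$, gives $\|T_fg\|_{\tilde B^{s_1,s_2}}\lesssim\|f\|_{L^\infty}\|g\|_{\tilde B^{s_1,s_2}}$; no sign condition is needed here. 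The symmetric term $T_gf$ gives $\|g\|_{L^\infty}\|f\|_{\tilde B^{s_1,s_2}}$. For $R(f,g)$ I would write $\|\Delta_p R(f,g)\|_{L^2}\lesssim\sum_{q\ge p-N_0}\|\Delta_q f\|_{L^2}\|\widetilde{\Delta}_q g\|_{L^\infty}\lesssim\|g\|_{L^\infty}\sum_{q\ge p-N_0}\|\Delta_q f\|_{L^2}$ and sum the resulting double series by exchanging the order of summation: this is where $s_1>0$ enters, to sum $\sum_{p\le\min(0,\,q+N_0)}2^{ps_1}$, and where $s_2>0$ enters, to absorb the tail $\sum_{q>0}\|\Delta_q f\|_{L^2}\le\|f\|_{\tilde B^{s_1,s_2}}$. (One can just as well put the other factor in $L^\infty$; either choice produces one of the two terms on the right-hand side, which is enough.) Adding the three contributions yields the first estimate.

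For the second estimate the architecture is unchanged, but since $f$ and $g$ now lie only in hybrid spaces of bounded regularity the $L^\infty$ bounds must be replaced by Bernstein estimates, e.g. $\|S_{q-1}f\|_{L^\infty}\lesssim\sum_{j\le q-2}2^{jN/2}\|\Delta_j f\|_{L^2}$, which for $q\le 0$ is $\lesssim 2^{q(N/2-s_1)}\|f\|_{\tilde B^{s_1,s_2}}$ when $s_1<N/2$ (the endpoint $s_1=N/2$ being absorbed by summing $2^{js_1}\|\Delta_j f\|_{L^2}$ directly, which is why one assumes $s_1,s_2\le N/2$), and for $q>0$ splits into a bounded low-frequency part plus $\sum_{0<j\le q-2}2^{jN/2}\|\Delta_j f\|_{L^2}$. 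Feeding these bounds into $T_fg$, $T_gf$ and $R(f,g)$, each application of Bernstein $L^2\to L^\infty$ costs a factor $2^{qN/2}$, which is absorbed against the output weight and accounts for the $-\f{N}{2}$ shifts in $\tilde B^{s_1+s_2-\f{N}{2},\,t_1+t_2-\f{N}{2}}$; the weighted summations then leave, at each output frequency, a geometric factor whose exponent is the appropriate combination of $s_1,s_2,t_1,t_2$, and the hypotheses $\min\{s_1+t_1,\,s_2+t_2\}>0$ and $s_1,s_2\le N/2$ are exactly what forces every such series to converge with a constant independent of $f$ and $g$. The remainder, where low output frequencies receive contributions from arbitrarily high input frequencies, is controlled by first putting one factor in $L^\infty$ via $\|\widetilde{\Delta}_q g\|_{L^\infty}\lesssim 2^{qN/2}\|\widetilde{\Delta}_q g\|_{L^2}$ and then exchanging summation orders as before. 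A routine approximation argument (or working directly with the convergent paraproduct series in $\mathcal{S}'$) justifies the estimates for all $f,g$ in the relevant spaces.

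The main obstacle is really just this hybrid bookkeeping: one must track, piece by piece, the exponent of $2$ produced after exchanging the order of summation, check that it equals the correct $\min(0,\cdot)$ or $\max(0,\cdot)$ of a combination of the indices, and confirm that the stated hypotheses are precisely those making every geometric series finite — the delicate spots being the junction $q\approx 0$ where the low-frequency index hands over to the high-frequency index, and the remainder term, in which output frequencies of one size are fed by input frequencies of all larger sizes. Away from these points, the computation is the classical homogeneous-Besov paraproduct estimate.
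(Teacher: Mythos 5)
First, a point of reference: the paper does not prove this proposition itself --- it simply cites Proposition 2.10 of Danchin's paper \cite{RD1} --- so there is no in-paper argument to compare against. Your route (Bony's decomposition $fg=T_fg+T_gf+R(f,g)$, spectral localization of each piece, Bernstein inequalities, exchange of the order of summation) is exactly the standard one used in the cited reference, and your proof of the first (tame) estimate is essentially complete and correct: the paraproducts need no sign condition, and $s_1>0$, $s_2>0$ enter only in resumming the remainder, precisely as you say.

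There is, however, a genuine gap in your treatment of the remainder in the second estimate. You propose to control $\Delta_pR(f,g)$ by putting one input factor in $L^\infty$ via $\|\tilde{\Delta}_qg\|_{L^\infty}\lesssim 2^{qN/2}\|\tilde{\Delta}_qg\|_{L^2}$ and then exchanging sums ``as before.'' Track the exponents: with $c_q=2^{q\sigma_f}\|\Delta_qf\|_{L^2}$ and $d_q=2^{q\sigma_g}\|\tilde{\Delta}_qg\|_{L^2}$ (where $\sigma_f,\sigma_g$ are the relevant regularity indices in the given frequency regime), this bound gives
$$2^{p\sigma}\|\Delta_pR\|_{L^2}\lesssim\sum_{q\ge p-N_0}2^{(p-q)\sigma}c_qd_q,\qquad \sigma=\sigma_f+\sigma_g-\tfrac{N}{2},$$
and after exchanging the order of summation the inner geometric series $\sum_{p\le q+N_0}2^{(p-q)\sigma}$ converges only when $\sigma>0$, i.e.\ when $\sigma_f+\sigma_g>\tfrac{N}{2}$ --- strictly stronger than the stated hypothesis $\sigma_f+\sigma_g>0$. (Take $\sigma_f+\sigma_g=\tfrac{N}{2}$ to see the divergence.) The standard fix, which is what the reference does, is to apply Bernstein to the \emph{output} block rather than to an input factor: since $\Delta_qf\,\tilde{\Delta}_qg\in L^1$ and $\Delta_p$ localizes to frequencies of size $2^p$, one has $\|\Delta_p(\Delta_qf\,\tilde{\Delta}_qg)\|_{L^2}\lesssim 2^{pN/2}\|\Delta_qf\|_{L^2}\|\tilde{\Delta}_qg\|_{L^2}$; the factor $2^{pN/2}$ then combines with the output weight $2^{p\sigma}$ to produce the geometric ratio $2^{(p-q)(\sigma_f+\sigma_g)}$, which is summable over $p\le q+N_0$ exactly under the stated condition $\min\{s_1+t_1,s_2+t_2\}>0$. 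The paraproduct pieces of your second estimate are fine (there the $L^2\to L^\infty$ Bernstein on $S_{q-1}f$ is the right tool, and $s_1,s_2\le\tfrac{N}{2}$ enters as you indicate), but as written the remainder estimate does not close. I would also note that the pairing of indices in the target space as printed ($\tilde{B}^{s_1+s_2-\f{N}{2},t_1+t_2-\f{N}{2}}$ with $f\in\tilde{B}^{s_1,s_2}$, $g\in\tilde{B}^{t_1,t_2}$) appears to be a transcription slip from \cite{RD1} --- the hypotheses $\min\{s_1+t_1,s_2+t_2\}>0$ match the target $\tilde{B}^{s_1+t_1-\f{N}{2},\,s_2+t_2-\f{N}{2}}$ --- which you inherited rather than introduced, but which you should resolve before the junction bookkeeping at $q\approx 0$ can be checked.
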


In order to state our existence result, we introduce some
functional spaces and explain the notations. Let $T>0$,
$r\in[0,\infty]$ and $X$ be a Banach space. We denote by
$\mathcal{M}(0,T;X)$ the set of measurable functions on $(0,T)$
valued in $X$. For $f\in \mathcal{M}(0,T; X)$, we define
$$\|f\|_{L^r_T(X)}=\left(\int_0^T\|f(\tau)\|_X^rd\tau\right)^{\f{1}{r}}\textrm{
if }r<\infty,$$
$$\|f\|_{L^\infty_T(X)}=\sup \textrm{ess}_{\tau\in(0,T)}\|f(\tau)\|_X.$$
Denote
$$L^r(0,T;X)=\{f\in \mathcal{M}(0,T;X):   \|f\|_{L^r_T(X)}<\infty\}.$$
 If $T=\infty$, we
denote by $L^r(\R^+; X)$ and $\|f\|_{L^r(X)}$ the corresponding
spaces and norms. Also denote by $C([0,T],X)$ (or $C(\R^+,X)$) the
set of continuous X-valued functions on $[0,T]$ (resp. $\R^+$). We
shall further denote by $C_b(\R^+;X)$ the set of bounded
continuous X-valued functions.

For $\alpha\in(0,1)$, $C^\alpha([0,T];X)$ (or $C^\alpha(\R^+;X)$)
stands for the space of the H\"older continuous functions in time with
order $\alpha$, that is, for every $t,s$ in $[0,T]$ (resp.
$\R^+$), we have
$$\|f(t)-f(s)\|_X\lesssim |t-s|^\alpha.$$

In this paper, the following estimates for the convection
terms arising in the localized system will be used several times (cf.
Lemma 5.1 in \cite{RD2} or Lemma 6.2 in \cite{RD1}).

\begin{Lemma}\label{cl}
Let $G$ be an homogeneous smooth function of degree $m$. Suppose
$-\f{N}{2}<s_i,t_i\le 1+\f{N}{2}$ for $i=1,2$. Then the following
 inequalities hold:
 \begin{equation}\label{25}
\begin{split}
&|(G(D)\D_q(\u\cdot\nabla f)|G(D)\D_q f)| \\
&\quad \le C\alpha_q
2^{-q(\phi^{s_1,s_2}(q)-m)}
\|\u\|_{B^{1+\f{N}{2}}}\|f\|_{\tilde{B}^{s_1,s_2}}\left\|G(D)\D_q
f)\right\|_{L^2},
\end{split}
\end{equation}
and
 \begin{equation}\label{26}
\begin{split}
&\left|(G(D)\D_q(\u\cdot\nabla f)|\D_q g)+(\D_q(\u\cdot\nabla
g)|G(D)\D_q f)\right|\\
&\quad\le
C\alpha_q\|\u\|_{B^{1+\f{N}{2}}}\Big(2^{-q(\phi^{t_1,t_2}(q)-m)}\left\|G(D)\D_qf\right\|_{L^2}\|g\|_{\tilde{B}^{t_1,t_2}}\\
&\qquad+2^{-q(\phi^{s_1,s_2}-m)}
\|f\|_{\tilde{B}^{s_1,s_2}}\|\D_qg\|_{L^2}\Big),
\end{split}
\end{equation}
where %the notation $\phi^{s,t}(q)$ means
\begin{equation*}
\phi^{s,t}(q):=
\begin{cases}
s,\quad\textrm{if}\quad q\le 0,\\
t,\quad\textrm{if}\quad q\ge 1.
\end{cases}
\end{equation*}
\end{Lemma}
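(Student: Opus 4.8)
The approach is the standard energy method for transport equations in Besov spaces: Bony's paraproduct decomposition combined with commutator estimates. The whole point is to extract enough structure from the bilinear pairing to avoid the loss of one derivative that a crude bound on $\u\cdot\nabla f$ would entail. First we dispose of $G$: since $G$ is homogeneous of degree $m$ and smooth on the shell $\mathcal{C}$, the symbol of $G(D)\D_q$ is $2^{qm}$ times a fixed smooth function of $2^{-q}\xi$ supported in $\mathcal{C}$; hence $\|G(D)\D_q g\|_{L^2}\lesssim 2^{qm}\|\D_q g\|_{L^2}$, and by the kernel representation $\D_q g(x)=2^{qN}\int h(2^q y)g(x-y)\,dy$ and the mean value theorem the commutator $[G(D)\D_q,a]$ gains one derivative with the corresponding $2^{q(m-1)}$ weight. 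Thus, up to bookkeeping of the $2^{qm}$ factors, it suffices to argue as in the case $G=\mathrm{Id}$, $m=0$.

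Write $\u\cdot\nabla f=u^j\partial_j f$ (summation over $j$) and apply Bony's decomposition $u^j\partial_j f=T_{u^j}\partial_j f+T_{\partial_j f}u^j+R(u^j,\partial_j f)$, where $T_a b=\sum_{q'}S_{q'-1}a\,\D_{q'}b$ and $R(a,b)=\sum_{|q'-q''|\le1}\D_{q'}a\,\D_{q''}b$. In $T_{\partial_j f}u^j$ and $R(u^j,\partial_j f)$ the factor $u^j$ always carries a frequency $\gtrsim 2^q$, so after localizing with $\D_q$ only finitely many --- respectively a geometric tail of --- dyadic blocks of $u^j$ survive, by the standard spectral-support bookkeeping of the Littlewood--Paley blocks. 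These pieces are estimated directly in $L^2$, the low-frequency factor in $L^\infty$ by Bernstein and the other in $L^2$; summing the resulting geometric series in $q'$ --- here the hypotheses $-\f{N}{2}<s_i\le 1+\f{N}{2}$ are precisely what make both the low- and the high-frequency ends of the sum converge --- gives a bound $\lesssim\alpha_q 2^{-q\phi^{s_1,s_2}(q)}\|\u\|_{B^{1+\f{N}{2}}}\|f\|_{\tilde{B}^{s_1,s_2}}$, with \emph{no} loss of derivative, because the gain needed to absorb $\partial_j$ is built into the high-frequency placement of $u^j$. Inserting the $2^{qm}$ from $\|G(D)\D_q(\cdot)\|_{L^2}$ produces the desired weight $2^{-q(\phi^{s_1,s_2}(q)-m)}$ for these contributions.

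The diagonal piece $\D_q T_{u^j}\partial_j f=\sum_{|q'-q|\le4}\D_q(S_{q'-1}u^j\,\D_{q'}\partial_j f)$ is the delicate one; for it we use the commutator identity
\[
\D_q T_{u^j}\partial_j f=\sum_{|q'-q|\le4}\Big([\D_q,S_{q'-1}u^j]\D_{q'}\partial_j f+(S_{q'-1}-S_{q-1})u^j\,\D_q\D_{q'}\partial_j f\Big)+S_{q-1}u^j\,\partial_j\D_q f.
\]
In the first term the commutator produces a $2^{-q}$ that cancels the $2^{q'}$ coming from $\partial_j\D_{q'}f$; in the second, $(S_{q'-1}-S_{q-1})u^j$ involves only dyadic blocks of $\u$ near frequency $2^q$, whose $L^\infty$ norm carries a $2^{-q}$ by Bernstein, again absorbing the derivative; both are then controlled by $\|\nabla\u\|_{L^\infty}\lesssim\|\u\|_{B^{1+\f{N}{2}}}$ times $\alpha_q 2^{-q\phi^{s_1,s_2}(q)}\|f\|_{\tilde{B}^{s_1,s_2}}$. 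The genuinely dangerous term $S_{q-1}u^j\,\partial_j\D_q f$ must be kept inside the inner product: moving $G(D)$ across the slowly varying factor $S_{q-1}u^j$ up to a derivative-gaining commutator, and then integrating by parts in $x$, we obtain
\[
\big(S_{q-1}u^j\,\partial_j G(D)\D_q f\,\big|\,G(D)\D_q f\big)=-\tfrac12\big(\Dv(S_{q-1}\u)\,G(D)\D_q f\,\big|\,G(D)\D_q f\big),
\]
which is $\lesssim\|\nabla\u\|_{L^\infty}\|G(D)\D_q f\|_{L^2}^2$; writing $\|G(D)\D_q f\|_{L^2}\lesssim 2^{qm}\|\D_q f\|_{L^2}=\alpha_q 2^{-q(\phi^{s_1,s_2}(q)-m)}\|f\|_{\tilde{B}^{s_1,s_2}}$ for one of the two factors yields \eqref{25}.

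For \eqref{26} one runs exactly the same decomposition on both $(G(D)\D_q(\u\cdot\nabla f)|\D_q g)$ and $(\D_q(\u\cdot\nabla g)|G(D)\D_q f)$. All contributions except the two diagonal $S_{q-1}u^j\partial_j(\cdot)$ terms are estimated as above, with $\|\u\|_{B^{1+\f{N}{2}}}$ distributed between the pairs $\|f\|_{\tilde{B}^{s_1,s_2}}\,\|\D_q g\|_{L^2}$ and $\|g\|_{\tilde{B}^{t_1,t_2}}\,\|G(D)\D_q f\|_{L^2}$ and the hybrid weights $\phi^{s_1,s_2}$, $\phi^{t_1,t_2}$ matched accordingly. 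The two diagonal terms are instead \emph{added}: integrating by parts in the first, $(S_{q-1}u^j\partial_j G(D)\D_q f|\D_q g)=-(\Dv(S_{q-1}\u)\,G(D)\D_q f|\D_q g)-(S_{q-1}u^j\,G(D)\D_q f|\partial_j\D_q g)$, and after moving $G(D)$ across (up to a commutator) the last term is exactly the negative of the diagonal term of the second inner product; the two derivative-losing contributions cancel, leaving only terms bounded by $\|\nabla\u\|_{L^\infty}$, which gives \eqref{26}. The crux of the whole argument --- and the reason the lemma is phrased for the bilinear pairing rather than for a Besov norm of $\u\cdot\nabla f$ --- is precisely this recovery of the lost derivative in the diagonal paraproduct, achievable only through the interplay of the commutator gain, the integration by parts, and (for \eqref{26}) the cancellation between the two inner products; tracking whether each frequency $2^q$, $2^{q'}$ lies below or above $2^0$, so as to produce the hybrid exponents $\phi^{s_1,s_2}$ and $\phi^{t_1,t_2}$ correctly, is what makes the argument technically heavy though routine.
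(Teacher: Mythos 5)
Your argument is correct and is precisely the standard Danchin commutator estimate: the paper does not prove Lemma \ref{cl} itself but cites Lemma 5.1 of \cite{RD2} (equivalently Lemma 6.2 of \cite{RD1}), whose proof is exactly your combination of Bony's decomposition, the commutator gain on the off-diagonal and remainder pieces, integration by parts on the diagonal term $S_{q-1}u^j\partial_j\D_q f$ for \eqref{25}, and the cancellation of the two diagonal terms for \eqref{26}. No substantive difference from the cited proof.
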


For the nonlinear term $\nabla\u E$, we have the following
estimates.
\begin{Lemma}\label{l222}
If  $-\f{N}{2}<s_i\le 1+\f{N}{2}$ for $i=1,2$, then
\begin{equation}\label{27}
\begin{split}
\|\nabla\u E\|_{\tilde{B}^{s_1,s_2}}\le C
\|\u\|_{B^{1+\f{N}{2}}}\|E\|_{\tilde{B}^{s_1,s_2}}.
\end{split}
\end{equation}
\iffalse%%%%%%%
\begin{itemize}
\item
\begin{equation}\label{27}
\begin{split}
\|\D_q(\nabla\u E)\|_{L^2}\le C\alpha_q 2^{-q\phi^{s_1,s_2}(q)}
\|\u\|_{B^{1+\f{N}{2}}}\|E\|_{\tilde{B}^{s_1,s_2}}.
\end{split}
\end{equation}
\item Also, we have
\begin{equation}\label{28}
\begin{split}
\|\D_q(E_{ij}\partial_{x_n} E_{kl})\|_{L^2}\lesssim
\alpha_q2^{-q(\phi^{\f{N}{2}-1,\f{N}{2}}(q)+1)}\|E_{ij}\|_{\tilde{B}^{\f{N}{2}-1,\f{N}{2}}}\|E_{kl}\|_{\tilde{B}^{\f{N}{2}+1,\f{N}{2}}},
\end{split}
\end{equation}
for all $1\le i,j,k,l,n\le N$. Furthermore, if $E$ satisfies
\eqref{11111b}, we have
\begin{equation}\label{2801}
\|\D_q(\textrm{curl}E)\|_{L^2}^2\lesssim
\begin{cases}
\alpha_q2^{-q(\f{N}{2}-1)}\|E\|_{\tilde{B}^{\f{N}{2}-1,\f{N}{2}}}\|E\|_{\tilde{B}^{\f{N}{2}+1,\f{N}{2}}}\|\D_q
E\|_{L^2},\quad\textrm{as}\quad q\le 0;\\
\alpha_q2^{-q(\f{N}{2}-1)}\|E\|_{\tilde{B}^{\f{N}{2}-1,\f{N}{2}}}\|E\|_{\tilde{B}^{\f{N}{2}+1,\f{N}{2}}}\|\Lambda\D_q
E\|_{L^2},\quad\textrm{as}\quad q>0.
\end{cases}
\end{equation}
\end{itemize}
\fi%%%%%%%%%
\end{Lemma}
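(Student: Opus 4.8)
The plan is to treat $\na\u\,E$ as an ordinary product of the two matrix-valued fields $\na\u$ and $E$ and to split it by Bony's decomposition (applied entry by entry),
\[
\na\u\,E = T_{\na\u}E + T_E(\na\u) + R(\na\u,E),
\]
with $T_fg=\sum_q S_{q-1}f\,\D_q g$ and $R(f,g)=\sum_{|p-p'|\le1}\D_p f\,\D_{p'}g$. The single structural fact that makes this go through is that the gradient places $\u$ exactly at the critical level: by the derivation property in Proposition \ref{p3}, $\u\in B^{1+\f{N}{2}}$ yields $\na\u\in B^{\f{N}{2}}$ with $\|\na\u\|_{B^{\f{N}{2}}}\approx\|\u\|_{B^{1+\f{N}{2}}}$, and the embedding $B^{\f{N}{2}}=\dot{B}^{\f{N}{2}}_{2,1}\hookrightarrow L^\infty$ (a one-line consequence of Bernstein's inequality $\|\D_q g\|_{L^\infty}\lesssim 2^{q\f{N}{2}}\|\D_q g\|_{L^2}$ together with $g=\sum_q\D_q g$) gives the crucial bound $\|\na\u\|_{L^\infty}\lesssim\|\u\|_{B^{1+\f{N}{2}}}$. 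We shall also use $\|\D_{q'}\na\u\|_{L^2}\approx 2^{q'}\|\D_{q'}\u\|_{L^2}=2^{-q'\f{N}{2}}\alpha_{q'}\|\u\|_{B^{1+\f{N}{2}}}$ for a summable sequence $(\alpha_{q'})$.

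Next I estimate each of the three pieces in $\tilde{B}^{s_1,s_2}$. The paraproduct $T_{\na\u}E$ is routine: since $\D_q(S_{q'-1}\na\u\,\D_{q'}E)=0$ unless $|q-q'|\le N_0$ for a fixed $N_0$, we get $\|\D_q(T_{\na\u}E)\|_{L^2}\lesssim\|\na\u\|_{L^\infty}\sum_{|q'-q|\le N_0}\|\D_{q'}E\|_{L^2}$, and multiplying by $2^{q\phi^{s_1,s_2}(q)}$ and summing in $q$ (the weights $2^{q\phi^{s_1,s_2}(q)}$ and $2^{q'\phi^{s_1,s_2}(q')}$ being comparable on $|q-q'|\le N_0$) controls this piece by $\|\u\|_{B^{1+\f{N}{2}}}\|E\|_{\tilde{B}^{s_1,s_2}}$. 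For $T_E(\na\u)=\sum_{q'}S_{q'-1}E\,\D_{q'}\na\u$, which is again spectrally supported near $2^{q'}$, I use $\|S_{q'-1}E\|_{L^\infty}\lesssim\sum_{p\le q'-2}2^{p\f{N}{2}}\|\D_pE\|_{L^2}$, write $\|\D_pE\|_{L^2}=2^{-p\phi^{s_1,s_2}(p)}\beta_p\|E\|_{\tilde{B}^{s_1,s_2}}$, pair this against the factor $2^{-q'\f{N}{2}}\alpha_{q'}$ from $\D_{q'}\na\u$, and after summing in $q$ exchange the orders of summation in $p$ and $q'$; the resulting geometric series are controlled by the hypotheses $-\f{N}{2}<s_i\le1+\f{N}{2}$, which keep $\f{N}{2}-\phi^{s_1,s_2}(p)$ of the right sign as $p\to-\infty$ and $\phi^{s_1,s_2}(q)-\f{N}{2}$ under control. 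Finally $R(\na\u,E)$ produces low output frequencies only through high--high interactions, where the low-frequency Bernstein bound $\|\D_q(ab)\|_{L^2}\lesssim 2^{q\f{N}{2}}\|a\|_{L^2}\|b\|_{L^2}$ and the condition $\f{N}{2}+s_i>0$ sum the series. Adding the three yields \eqref{27}.

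Alternatively, the estimate is immediate from the hybrid-space product rule: $\na\u\in B^{\f{N}{2}}=\tilde{B}^{\f{N}{2},\f{N}{2}}$ has both indices equal to $\f{N}{2}$ and $E\in\tilde{B}^{s_1,s_2}$ satisfies $\min\{\f{N}{2}+s_1,\f{N}{2}+s_2\}>0$, so the second part of Proposition \ref{p2}, applied with the indices $(\f{N}{2},\f{N}{2})$ assigned to $\na\u$ and $(s_1,s_2)$ assigned to $E$, gives $\|\na\u\,E\|_{\tilde{B}^{s_1,s_2}}\lesssim\|\na\u\|_{\tilde{B}^{\f{N}{2},\f{N}{2}}}\|E\|_{\tilde{B}^{s_1,s_2}}\approx\|\u\|_{B^{1+\f{N}{2}}}\|E\|_{\tilde{B}^{s_1,s_2}}$. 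I expect the real difficulty, in the hands-on argument, to be concentrated in the term $T_E(\na\u)$ (and secondarily $R(\na\u,E)$): there $E$ sits at the low frequencies while the output frequency is carried by $\na\u$, so the weights $2^{q\phi^{s_1,s_2}(q)}$ threaten summability and one must balance two competing geometric series, which is exactly where the two-sided control on the $s_i$ is consumed; the paraproduct $T_{\na\u}E$, by contrast, needs nothing beyond the $L^\infty$ bound on $\na\u$.
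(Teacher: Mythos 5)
Your main argument is essentially the paper's proof in a more standard packaging: both rest on Bony's decomposition together with the critical bound $\|\na\u\|_{L^\infty}\lesssim \|\na\u\|_{B^{\f{N}{2}}}\thickapprox\|\u\|_{B^{1+\f{N}{2}}}$. The paper writes $\D_q(\na\u\, E)=\D_qT'_E\na\u+J_q$, delegates $T'_E\na\u=T_E\na\u+R(\na\u,E)$ to a cited proposition of Danchin, and recasts the paraproduct $T_{\na\u}E$ as $J_q$ via the commutator $[\D_q,S_{q'-1}(\na\u)]\D_{q'}E$; but that commutator is only needed to produce the blockwise coefficient $\alpha_q$, not for the norm inequality \eqref{27}, and your direct bound on $T_{\na\u}E$ by $\|\na\u\|_{L^\infty}$ together with the standard Bernstein treatment of the remainder (which is where the lower bound $s_i>-\f{N}{2}$ is consumed) matches what the paper does.

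The gap is in $T_E(\na\u)$ -- exactly where you predicted the difficulty -- but the upper bound $s_i\le 1+\f{N}{2}$ does not resolve it. Your estimate $\|S_{q'-1}E\|_{L^\infty}\lesssim\sum_{p\le q'-2}2^{p(\f{N}{2}-\phi^{s_1,s_2}(p))}\beta_p\,\|E\|_{\tilde{B}^{s_1,s_2}}$ requires $\f{N}{2}-\phi^{s_1,s_2}(p)\ge 0$ as $p\to-\infty$, i.e.\ $s_1\le\f{N}{2}$; for $s_1\in(\f{N}{2},1+\f{N}{2}]$ the exponent has the wrong sign, the series diverges, and indeed $\tilde{B}^{s_1,s_2}$ no longer embeds into $L^\infty$, so $S_{q'-1}E$ need not be bounded at all. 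This is not a fixable technicality: taking $E$ a single very low-frequency block $\D_{-M}E$ against $\u$ at unit frequency shows that \eqref{27} itself fails when $s_1>\f{N}{2}$ (and a symmetric example with $\u$ at high frequency shows $s_2\le\f{N}{2}$ is also needed). The same restriction is hidden in your ``alternative'' route: the second part of Proposition \ref{p2}, in its correct form, requires the low-frequency indices of \emph{both} factors to be $\le\f{N}{2}$, a hypothesis you did not check and which is exactly what fails here. In fairness, the paper's own proof has the same blind spot -- it asserts the cited proposition covers $T'_E\na\u$ up to $s_i\le 1+\f{N}{2}$, which the counterexample rules out -- and the lemma is only ever invoked with $(s_1,s_2)=(\f{N}{2}-1,\f{N}{2})$, where both your argument and the paper's are correct. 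So your proof establishes everything that is actually used, but not the lemma on its full stated range; you should either restrict to $s_i\le\f{N}{2}$ or flag that the stated range is too generous.
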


To proof the above lemma, we need to recall the paradifferential
calculus which enables us to define a generalized product between
distributions. The paraproduct between $f$ and $g$ is defined by
$$T_f g=\sum_{q\in\mathbb{Z}}S_{q-1}f\D_qg.$$
We also have the following formal decomposition (modulo a polynomial):
$$fg=T_f g+T_g f+R(f,g),$$
with
$$R(f,g)=\sum_{q\in\mathbb{Z}}\D_q f\tilde{\D}_q g,$$
where $\tilde{\D}_q=\D_{q-1}+\D_q+\D_{q+1}.$

\begin{proof}[Proof of Lemma \ref{l222}]
Denoting $T'_fg=T_fg+R(g, f)$, we get the following decomposition
\begin{equation}\label{29}
\D_q(\nabla \u E)=\D_qT'_E \nabla\u+J_q,
\end{equation}
with
\begin{equation*}
\begin{split}
J_q=\sum_{|q'-q|\le 3}\Big([\D_q, S_{q'-1}(\nabla
\u)]\D_{q'}E+(S_{q'-1}-S_{q-1})(\nabla\u)\D_q\D_{q'}E+S_{q-1}(\nabla\u)\D_q
E\Big).
\end{split}
\end{equation*}
Applying Proposition 5.2 in \cite{RD2} or Proposition 6.1 in
\cite{RD1}, we see that the first term on the right-hand side of
\eqref{29} satisfies \eqref{27} provided $-\f{N}{2}<s_i\le
\f{N}{2}+1$ for $i=1,2$. Next we estimate $J_q$ term by term.
First, for the commutator $[\D_q, S_{q'-1}(\nabla \u)]\D_{q'}E$,
we have
\begin{equation*}
\begin{split}
&[\D_q, S_{q'-1}(\nabla
\u)]\D_{q'}E(x)\\&\quad=2^{-q}\int_{\R^N}\int_0^1h(y)(y\cdot
S_{q'-1}(\nabla\nabla \u)(x-2^{-q}\tau y))\D_{q'}E(x-2^{-q}y)d\tau
dy.
\end{split}
\end{equation*}
The above identity, together with Young's inequality for
convolution operator, yields
$$\|[\D_q, S_{q'-1}(\nabla
\u)]\D_{q'}E\|_{L^2}\le
C\|\nabla\u\|_{L^\infty}\|\D_{q'}E\|_{L^2},$$ since
$$\|S_{q'-1}\nabla\nabla\u\|_{L^\infty}\lesssim
2^q\|\nabla\u\|_{L^\infty}\lesssim 2^q\|\u\|_{B^{\f{N}{2}+1}}$$
according to Bernstein's Lemma (cf. \cite{CH, RD}). Hence, we
easily obtain the following inequality:
$$\|J_q\|_{L^2}\le C\alpha_q 2^{-q\phi^{s_1,s_2}(q)}
\|\u\|_{B^{1+\f{N}{2}}}\|E\|_{\tilde{B}^{s_1,s_2}}.$$
\iffalse %%%%%
To prove \eqref{28}, we have
\begin{equation*}
\begin{split}
\|\D_q(E_{ij}\partial_{x_n}E_{kl})\|_{L^2}&\le \alpha_q
2^{-q(\f{N}{2}-1)}\|E_{ij}\partial_{x_n}E_{kl}\|_{B^{\f{N}{2}-1}}\\
&\le\alpha_q
2^{-q(\f{N}{2}-1)}\|E_{ij}\|_{\tilde{B}^{\f{N}{2}-1,\f{N}{2}}}\|\partial_{x_n}E_{kl}\|_{\tilde{B}^{\f{N}{2},\f{N}{2}-1}}\\
&\le\alpha_q
2^{-q(\f{N}{2}-1)}\|E_{ij}\|_{\tilde{B}^{\f{N}{2}-1,\f{N}{2}}}\|E_{kl}\|_{\tilde{B}^{\f{N}{2}+1,\f{N}{2}}}.
\end{split}
\end{equation*}
Finally, in order to prove \eqref{2801}, we have, using
\eqref{11111b}
\begin{equation*}
\begin{split}
\|\D_q(\textrm{curl}E)\|_{L^2}^2&=\Big(\D_q(\textrm{curl}E)|\D_q(\textrm{curl}E)\Big)\\
&=\Big(\D_q((\textrm{curl}E)_i)|\D_q(E_{lj}\nabla_{x_l}
E_{ik}-E_{lk}\nabla_{x_l}E_{ij})\Big)\\
&\le C
\|\D_q(\textrm{curl}E)\|_{L^2}\|\D_q(E_{ij}\partial_{x_n}E_{kl})\|_{L^2}\\
&\lesssim 2^q\|\D_q E\|_{L^2}\alpha_q
2^{-q(\f{N}{2}-1)}\|E_{ij}\|_{\tilde{B}^{\f{N}{2}-1,\f{N}{2}}}\|E_{kl}\|_{\tilde{B}^{\f{N}{2}+1,\f{N}{2}}},
\end{split}
\end{equation*}
which implies that
\begin{equation*}
\|\D_q(\textrm{curl}E)\|_{L^2}^2\lesssim
\begin{cases}
\alpha_q2^{-q(\f{N}{2}-1)}\|E\|_{\tilde{B}^{\f{N}{2}-1,\f{N}{2}}}\|E\|_{\tilde{B}^{\f{N}{2}+1,\f{N}{2}}}\|\D_q
E\|_{L^2},\quad\textrm{as}\quad q\le 0;\\
\alpha_q2^{-q(\f{N}{2}-1)}\|E\|_{\tilde{B}^{\f{N}{2}-1,\f{N}{2}}}\|E\|_{\tilde{B}^{\f{N}{2}+1,\f{N}{2}}}\|\Lambda\D_q
E\|_{L^2},\quad\textrm{as}\quad q>0.
\end{cases}
\end{equation*}
\fi%%%%%%

The proof is complete.
\end{proof}

\bigskip

\section{Reformulation and Main Results}

In this section, we state our global existence result. We first
reformulate  system \eqref{e1e}. Assume that the pressure
$P(\widehat{\r})$ is an increasing convex function with
$P'(1)>0$, and denote $\chi_0=(P'(1))^{-\f{1}{2}}$. For
$\widehat{\r}>0$, system \eqref{e1e} can be rewritten as
\begin{subequations} \label{x61}
\begin{align}
&\widehat{\r}_t +\widehat{\u}\cdot\nabla\widehat{\r}+\widehat{\r}\Dv\widehat{\u}=0,\\
&\partial_t\widehat{\u}_i+\widehat{\u}\cdot\nabla\widehat{\u}_i-\f{1}{\widehat{\r}}\left(\mu\D
\widehat{\u}_i-(\lambda+\mu)\partial_{x_i}\Dv\widehat{\u}\right)
+\f{P'(\widehat{\r})}{\widehat{\r}}\partial_{x_i} \widehat{\r}=\alpha\F_{jk}\partial_{x_j}\F_{ik},\\
&\F_t+\widehat{\u}\cdot\nabla\F=\nabla\widehat{\u} \, \F,
\end{align}
\end{subequations}
where we used the condition $\Dv(\widehat{\r} \F^\top)=0$ (see
Lemma \ref{div}) for all $t\ge 0$, which ensures that the i-th
component of the vector $\Dv(\r\F\F^\top)$ is
\begin{equation*}
\begin{split}
\partial_{x_j}(\widehat{\r}\F_{ik}\F_{jk})&=\widehat{\r}\F_{jk}\partial_{x_j}\F_{ik}+\F_{ik}\partial_{x_j}(\widehat{\r}\F_{jk})\\
&=\widehat{\r}\F_{jk}\partial_{x_j}\F_{ik}.
\end{split}
\end{equation*}

Define
$$\r(t,x)=\widehat{\r}(\chi_0^2t,\chi_0 x)-1,\quad \u(t,x)=\chi_0
\widehat{\u}(\chi_0^2 t, \chi_0 x),\quad E(t,x)=\F(\chi_0^2
t,\chi_0 x)-I,$$
then
\begin{subequations} \label{62000}
\begin{align}
&\r_t +\u\cdot\nabla\r+\Dv\u=-\r\Dv\u,\\
&\partial_t\u_i+\u\cdot\nabla\u_i-\mathcal{A}\u
+\nabla_{x_i}\r-a\partial_{x_j} E_{ij}=aE_{jk}\partial_{x_j}E_{ik}-\f{\r}{1+\r}\mathcal{A}\u-K(\r)\partial_{x_i}\r,\\
&E_t+\u\cdot\nabla E-\nabla\u=\nabla\u E,
\end{align}
\end{subequations}
with
$$K(\r):= \f{P'(\r+1)}{(1+\r)P'(1)}-1,\quad
\mathcal{A}:=\mu\D+(\lambda+\mu)\nabla\Dv,\quad
a=\f{\alpha}{P'(1)}.$$ We remark that for simplicity of the
presentation, we will assume that $a=1$ for the rest of this
paper.

For $s\in\R$, we denote
%$\Lambda^sf:=\mathcal{F}^{-1}(|\xi|^s\mathcal{F}(f))$.
$$\Lambda^sf:=\mathcal{F}^{-1}(|\xi|^s\mathcal{F}(f)).$$
Let
$$d=\Lambda^{-1}\Dv\u$$ be the``compressible part" of the velocity,
and $$\O=\Lambda^{-1}\textrm{curl}\u, \quad\text{with }
\textrm{curl}(\u))_i^j=\partial_{x_j}\u^i-\partial_{x_i}\u^j$$
be the ``incompressible part". Setting $\nu=\lambda+2\mu$,  then system
\eqref{62000} can be rewritten as
\begin{subequations} \label{63}
\begin{align}
&\r_t +\Lambda d=-\r\Dv\u-\u\cdot\nabla\r,\\
&\partial_t d-\nu\D d-\Lambda \r-\mathcal{T} E=\Lambda^{-1}\Dv\left(-\u\cdot\nabla \u+E_{jk}\partial_{x_j}E_{ik}-\f{\r}{1+\r}\mathcal{A}\u-K(\r)\partial_{x_i}\r\right),\\
&\partial_t\O-\mu\D\O-\mathcal{R}E=\Lambda^{-1}\textrm{curl}\left(-\u\cdot\nabla \u+E_{jk}\partial_{x_j}E_{ik}-\f{\r}{1+\r}\mathcal{A}\u-K(\r)\partial_{x_i}\r\right),\\
&E_t+\u\cdot\nabla E-\nabla\u=\nabla\u E,\\
&\u=-\Lambda^{-1}\nabla d+\Lambda^{-1}\textrm{curl}\O,
\end{align}
\end{subequations}
where
$$\mathcal{R}=\Lambda^{-1}\textrm{curl}\,\Dv, \quad
\mathcal{T}=\Lambda^{-1}\Dv\,\Dv.$$
The operators $\Lambda$, $\mathcal{T}$ and
$\mathcal{R}$ are differential operators of order one.

Notice that the condition $\Dv(\widehat{\r}\F)=0$ for all $t\ge 0$
implies that $\f{\partial^2(\widehat{\r}\F_{ij})}{\partial
x_i\partial x_j}=0$ for all $t\ge 0$ and smooth functions
$\widehat{\r}$, $\F$. Hence, we have
\begin{equation}\label{6401}
\begin{split}
\mathcal{T}E&=\Lambda^{-1}\left(\f{\partial^2E_{ij}}{\partial x_i\partial x_j}\right)\\
&=\underbrace{\Lambda^{-1}\left(\f{\partial^2[(1+\r)(\dl_{ij}+E_{ij})]}{\partial
x_i\partial x_j}\right)}_{=0}-\Lambda^{-1}\Dv\Dv(\r I+\r E)\\
&=\Lambda\r-\Lambda^{-1}\Dv\Dv(\r E),
\end{split}
\end{equation}
where
\begin{equation*} \dl_{ij}=\begin{cases}0,\quad
\textrm{if}\quad i\neq j;\\ 1,\quad\textrm{if}\quad
i=j.\end{cases}
\end{equation*}
On the other hand, according to Lemma \ref{curl} (see Appendix),
we have
\begin{equation*}
\begin{split}
(\mathcal{R}E)_{ij}&=\Lambda^{-1}\left(\partial_{x_j}\left(\f{\partial
E_{ik}}{\partial x_k}\right)-\partial_{x_i}\left(\f{\partial
E_{jk}}{\partial x_k}\right)\right)\\
&=\Lambda^{-1}\left(\partial_{x_k}\left(\f{\partial
E_{ik}}{\partial x_j}\right)-\partial_{x_k}\left(\f{\partial
E_{jk}}{\partial x_i}\right)\right)\\
&=\Lambda^{-1}\left(\partial_{x_k}\left(\f{\partial
E_{ij}}{\partial x_k}\right)-\partial_{x_k}\left(\f{\partial
E_{ji}}{\partial
x_k}\right)\right)+\Lambda^{-1}\partial_{x_k}(E_{lk}\nabla_lE_{ij}-E_{lj}\nabla_l
E_{ik})\\
&\quad-\Lambda^{-1}\partial_{x_k}(E_{lk}\nabla_lE_{ji}-E_{li}\nabla_l E_{jk})\\
&=-\Lambda (E_{ij}-E_{ji})
+\Lambda^{-1}\partial_{x_k}(E_{lk}\nabla_lE_{ij}-E_{lj}\nabla_l
E_{ik})\\&\quad-\Lambda^{-1}\partial_{x_k}(E_{lk}\nabla_lE_{ji}-E_{li}\nabla_l
E_{jk}).
\end{split}
\end{equation*}
Thus, we finally obtain
\begin{subequations} \label{x64}
\begin{align}
&\r_t +\Lambda d=-\r\Dv\u-\u\cdot\nabla\r,\\
&\partial_t d-\nu\D d-2\Lambda \r=\Lambda^{-1}\Dv\left(-\u\cdot\nabla \u+E_{jk}\partial_{x_j}E_{ik}
-\f{\r}{1+\r}\mathcal{A}\u-K(\r)\partial_{x_i}\r-\Dv(\r E)\right),\\
&\partial_t\O-\mu\D\O+\Lambda
(E-E^\top)=\Lambda^{-1}\textrm{curl}\left(-\u\cdot\nabla \u+E_{jk}
\partial_{x_j}E_{ik}-\f{\r}{1+\r}\mathcal{A}\u-K(\r)\partial_{x_i}\r\right)+\mathcal{S},\\
&(E^\top-E)_t+\u\cdot\nabla (E^\top-E)+\Lambda\O=(\nabla\u E)^\top-\nabla\u E,\\
&\mathcal{E}_t+2\Lambda d=-\Lambda^{-1}\partial_{x_i}\Lambda^{-1}\partial_{x_j}\left(\u\cdot\nabla(E_{ij}+E_{ji})\right)+\Lambda^{-1}\partial_{x_i}\Lambda^{-1}\partial_{x_j}\left((\nabla\u E)_{ij}+(\nabla\u E)_{ji}\right),\\
&\u=-\Lambda^{-1}\nabla
d+\Lambda^{-1}\textrm{curl}\O,
\end{align}
\end{subequations}
where the antisymmetric matrix $\mathcal{S}$ is defined as
$$\mathcal{S}_{ij}=\Lambda^{-1}\partial_{x_k}(E_{lk}\nabla_lE_{ij}-E_{lj}\nabla_l
E_{ik})-\Lambda^{-1}\partial_{x_k}(E_{lk}\nabla_lE_{ji}-E_{li}\nabla_l E_{jk}),$$
and the scalar function $\mathcal{E}$ is defined as
$$\mathcal{E}_{ij}=\Lambda^{-1}\partial_{x_i}\Lambda^{-1}\partial_{x_j}(E_{ij}+E_{ji}).$$
Notice that from Proposition \ref{p3}, we deduce that
$$\|\mathcal{E}\|_{B^s}\thickapprox \|E+E^\top\|_{B^s},$$ and
\begin{equation}\label{EE}
\|\mathcal{E}\|_{B^s}+\|E-E^\top\|_{B^s}\thickapprox
\|E\|_{B^s}.
\end{equation}
Also, according to \eqref{6401} and the second equation of
\eqref{x64}, we have
\begin{equation}\label{6402}
\partial_t d-\nu\D d-2\Lambda \mathcal{E}=\Lambda^{-1}\Dv\left(-\u\cdot\nabla \u+E_{jk}\partial_{x_j}E_{ik}
-\f{\r}{1+\r}\mathcal{A}\u-K(\r)\partial_{x_i}\r+\Dv(\r E)\right).
\end{equation}
The motivation to write the second equation of \eqref{x64} as
\eqref{6402} is to obtain the estimate on the symmetric part
$\mathcal{E}$ of the deformation gradient, as we will see in
section 4.

The fact that this new formulation (\eqref{x64}, supplemented with
\eqref{6402}) is equivalent to \eqref{x61} requires some
explanation: it is not immediately obvious that the second and the
third equation in the above system are equivalent to the second
equation in \eqref{x61} under the condition that
$\Dv((1+\r)(I+E)^\top)=0$. Notice however that the right hand side
(denote it by $\mathcal{O}$) of the third equation in \eqref{x64}
and the term $\Lambda(E^\top-E)$ are skew-symmetric matrices and
satisfy the following Jacobi relation:
$$\partial_{x_i}\mathcal{O}_j^k+\partial_{x_j}\mathcal{O}_k^i+\partial_{x_k}\mathcal{O}_i^j=0\quad\textrm{for}\quad
1\le i,j,k\le N.$$ Since
$\O_0:=\Lambda^{-1}\textrm{curl}\u_0$ is a skew-symmetric
matrix satisfying the Jacobi relation, this is also the case for
$\O$. We therefore have the equivalence
$$\u=-\Lambda^{-1}\nabla
d+\Lambda^{-1}\textrm{curl}\O\quad\Leftrightarrow\quad\Dv\u=\Lambda
d\quad\textrm{and}\quad\textrm{curl}\u=\Lambda\O,$$ which enables
us to conclude that $\u$ indeed satisfies the second equation
\eqref{x61} as soon as $d$ and $\O$ satisfy the second and the
third equation in \eqref{x64}.

The existence of a solution to \eqref{e1e} is proved thanks to a
classical (and tedious) iteration method: we define a sequence of
approximate solutions of \eqref{e1e} which solve a linear systems
to which Proposition \ref{p1} applies. For small enough initial
data, we obtain uniform estimates so that we can use a compactness
argument to show the convergence of such an approximate solution.
Refer to Section 5 for more details of the complete proof.

Let us now introduce the functional space which appears in the
global existence theorem.

\begin{Definition}
For $T>0$, and $s\in \R$, we denote
\begin{equation*}
\begin{split}
\mathfrak{B}^s_T&=\Big\{(\r, \u, E)\in \left(L^1(0,T;
\tilde{B}^{s+1,s})\cap
C([0,T];\tilde{B}^{s-1,s})\right)\\&\qquad\qquad\qquad\times\left(L^1(0,T;
B^{s+1})\cap
C([0,T];B^{s-1})\right)^N\\&\qquad\qquad\qquad\times\left(L^1(0,T;
\tilde{B}^{s+1,s})\cap C([0,T];\tilde{B}^{s-1,s})\right)^{N\times
N}\Big\}
\end{split}
\end{equation*}
and
\begin{equation*}
\begin{split}
\|(\r,\u,
E)\|_{\mathfrak{B}^s_T}&=\|\r\|_{L^\infty_T(\tilde{B}^{s-1,s})}+\|\u\|_{L^\infty_T(B^{s-1,s})}+\|E\|_{L^\infty_T(\tilde{B}^{s-1,s})}\\
&\quad+\|\r\|_{L^1_T(\tilde{B}^{s+1,s})}+\|\u\|_{L^1_T(B^{s+1})}+\|E\|_{L^1_T(\tilde{B}^{s+1,s})}.
\end{split}
\end{equation*}
We use the notation $\mathfrak{B}^s$ if $T=+\infty$ by changing
the interval $[0,T]$ into $[0,\infty)$ in the definition above.
\end{Definition}

Now it is ready to state our main result:

\begin{Theorem}\label{mt}
There exists two positive constants $\gamma$ and $\Gamma$,  such
that,  if $\widehat{\r}_0-1\in \tilde{B}^{\f{N}{2}-1,\f{N}{2}}$,
$\widehat{\u}_0\in B^{\f{N}{2}-1}$,
$\F_0-I\in\tilde{B}^{\f{N}{2}-1,\f{N}{2}}$ satisfy
\begin{itemize}
\item $\|\widehat{\r}_0-1\|_{\tilde{B}^{\f{N}{2}-1,\f{N}{2}}}+\|\widehat{\u}_0\|_{B^{\f{N}{2}-1}}+\|\F_0-I\|_{\tilde{B}^{\f{N}{2}-1,\f{N}{2}}}\le
\gamma;$
\item $\Dv(\widehat{\r}_0\F_0^\top)=0$;
\item
$\F_{lk}(0)\nabla_{x_l}\F_{ij}(0)=\F_{lj}(0)\nabla_{x_l}\F_{ik}(0),$
\end{itemize}
then system \eqref{e1e} has a solution $(\widehat{\r}, \widehat{\u}, \F)$ with $(\widehat{\r}-1,
\widehat{\u}, \F-I)$ in $\mathfrak{B}^{\f{N}{2}}$ satisfying
$$\|(\widehat{\r}-1,\widehat{\u},\F-I)\|_{\mathfrak{B}^{\f{N}{2}}}\le
\Gamma\left(\|\widehat{\r}_0-1\|_{\tilde{B}^{\f{N}{2}-1,\f{N}{2}}}+\|\widehat{\u}_0\|_{B^{\f{N}{2}-1}}+\|\F_0-I\|_{\tilde{B}^{\f{N}{2}-1,\f{N}{2}}}\right).$$
\end{Theorem}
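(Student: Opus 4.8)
The plan is to construct the solution via a Friedrichs-type iteration scheme built on the reformulated system \eqref{x64} (together with the auxiliary equation \eqref{6402}), and to close uniform bounds in the space $\mathfrak{B}^{\f{N}{2}}$ using the a priori estimates for the linearized hyperbolic-parabolic system with convection that are established in Section~4 (the announced Proposition~\ref{p1}). First, I would set up the iteration: starting from $(\r^0,\u^0,E^0)=(0,0,0)$, define $(\r^{n+1},\u^{n+1},E^{n+1})$ to solve the linear system obtained from \eqref{x64}--\eqref{6402} by freezing the convection speed and all quadratic terms at step $n$; each such linear problem is globally solvable in $\mathfrak{B}^{\f{N}{2}}_T$ for every $T$ by the linear theory. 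The new unknowns here are the scalar $\mathcal{E}$ (symmetric part of $E$), the antisymmetric part $E-E^\top$, and $d,\O$; by \eqref{EE} controlling $\mathcal{E}$ and $E-E^\top$ is equivalent to controlling $E$, and by Proposition~\ref{p3} controlling $d,\O$ is equivalent to controlling $\u$, so a bound in the $\mathfrak{B}^{\f{N}{2}}$-norm of the reformulated variables transfers back to the original ones.

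Next, I would prove a uniform bound: there is $\Gamma>0$ such that if the initial norm $X_0:=\|\widehat\r_0-1\|_{\tilde B^{\f N2-1,\f N2}}+\|\widehat\u_0\|_{B^{\f N2-1}}+\|\F_0-I\|_{\tilde B^{\f N2-1,\f N2}}\le\gamma$ is small enough, then $\|(\r^n,\u^n,E^n)\|_{\mathfrak{B}^{\f N2}}\le \Gamma X_0$ for all $n$, uniformly in $T=\infty$. This is the heart of the argument. Applying the linear estimate (Proposition~\ref{p1}) to the $(n+1)$-st system gives $\|(\r^{n+1},\u^{n+1},E^{n+1})\|_{\mathfrak{B}^{\f N2}}\lesssim X_0 + \|(\text{nonlinear source terms at step }n)\|_{L^1(\tilde B^{\f N2-1,\f N2})\times L^1(B^{\f N2-1})\times L^1(\tilde B^{\f N2-1,\f N2})}$, where the convection contributions $\u^n\cdot\nabla(\cdot)$ are absorbed into the left-hand side via Lemma~\ref{cl} (this is why the linearized system carries convection terms). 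Each nonlinear term is then estimated by the product rules of Proposition~\ref{p2}, Lemma~\ref{l222}, and the composition Lemma~\ref{l11}: for instance $\r\Dv\u,\ \r E,\ E_{jk}\partial_{x_j}E_{ik},\ \nabla\u\,E$ are all controlled (using that $\tilde B^{\f N2-1,\f N2}\hookrightarrow L^\infty$ and that $\tilde B^{\f N2-1,\f N2}$ is stable under multiplication by $B^{\f N2}$-type factors) by a product of two $\mathfrak{B}^{\f N2}$-norms, hence by $(\Gamma X_0)^2$; the terms $\f{\r}{1+\r}\mathcal{A}\u$ and $K(\r)\partial_{x_i}\r$ are handled with Lemma~\ref{l11}, noting $K(0)=0$, which costs a factor $C(\|\r\|_{L^\infty})$ times $\|\r\|\cdot\|\u\|$ (resp.\ $\|\r\|^2$) — and $\|\r^n\|_{L^\infty}$ stays $\le 1/2$ as long as $\Gamma X_0$ is small, which also keeps $1+\r^n$ bounded away from zero. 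One thus gets $\Gamma X_0 \le C X_0 + C(\Gamma X_0)^2$; choosing $\Gamma=2C$ and then $\gamma$ so small that $C\Gamma\gamma\le \tfrac12$ closes the induction.

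Having the uniform bound, I would then show the sequence is Cauchy: form the differences $(\delta\r^n,\delta\u^n,\delta E^n)=(\r^{n+1}-\r^n,\dots)$, which solve a linear system of the same type with source terms that are differences of the nonlinearities; by bilinearity each such difference is a sum of products in which at least one factor is a difference, so the same product estimates give $\|(\delta\r^{n},\delta\u^n,\delta E^n)\|_{\mathfrak{B}^{\f N2'}}\le C\Gamma\gamma\,\|(\delta\r^{n-1},\delta\u^{n-1},\delta E^{n-1})\|_{\mathfrak{B}^{\f N2'}}$ in a slightly weaker norm — say with regularity index $\f N2-1$ in the low-frequency slot, using the second part of Proposition~\ref{p2} with the room $-\f N2<s\le\f N2$ — with contraction constant $<1$. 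Hence the sequence converges in that weaker space; interpolating with the uniform $\mathfrak{B}^{\f N2}$ bound (Proposition~\ref{p3}, interpolation) upgrades convergence to $\mathfrak{B}^{s}$ for every $s<\f N2$, which is more than enough to pass to the limit in all the (at most quadratic) nonlinear terms and in the linear terms, so the limit $(\r,\u,E)$ solves \eqref{x64}. Undoing the reformulation — using \eqref{EE}, Proposition~\ref{p3}, the equivalence between \eqref{x64} and \eqref{x61} discussed above via the Jacobi/skew-symmetry relations, the scaling change of variables $(\r,\u,E)\mapsto(\widehat\r,\widehat\u,\F)$, and checking that the constraints $\Dv(\widehat\r_0\F_0^\top)=0$ and $\F_{lk}(0)\nabla_{x_l}\F_{ij}(0)=\F_{lj}(0)\nabla_{x_l}\F_{ik}(0)$ propagate in time by Lemmas~\ref{div} and \ref{curl} (Appendix) — yields a solution of \eqref{e1e} with $(\widehat\r-1,\widehat\u,\F-I)\in\mathfrak{B}^{\f N2}$ and the stated bound $\|(\widehat\r-1,\widehat\u,\F-I)\|_{\mathfrak{B}^{\f N2}}\le\Gamma X_0$; lower semicontinuity of the norm under the weak-type convergence gives the final inequality.

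I expect the main obstacle to be the uniform a priori estimate — specifically, extracting genuine dissipation for the linear coupling $\Lambda\r$, $\Dv E$, $\nabla\u$ in the linearized system so that Proposition~\ref{p1} produces an $L^1_T$-in-time (not merely $L^\infty_T$) bound on $(\r,\u,E)$ with constant independent of $T$. That is exactly the point where the decomposition of $E$ into its symmetric part $\mathcal{E}$ (governed by \eqref{6402}) and antisymmetric part $E-E^\top$ is used, together with the splitting of $\u$ into $d$ and $\O$, so that $\mathcal{E}$ is damped through its coupling with $d$ and $E-E^\top$ through its coupling with $\O$; handling the low-frequency versus high-frequency regimes separately (hence the hybrid spaces $\tilde B^{s,t}$) is essential here. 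Once that linear estimate is in hand, the rest is the by-now standard, if lengthy, fixed-point bookkeeping sketched above.
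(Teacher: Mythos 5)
Your proposal is correct and follows essentially the same route as the paper: the same iteration on the reformulated system with convection frozen at step $n$, the same use of Proposition \ref{p1} plus the product and composition lemmas to close the uniform bound $\Gamma X_0\le CX_0+C(\Gamma X_0)^2$, and the same return to \eqref{e1e} via the appendix lemmas. The only cosmetic difference is in the convergence step, where you contract in a weaker norm and interpolate while the paper leans primarily on an Ascoli--Arzela compactness argument (and also asserts the Cauchy property directly in $\mathfrak{B}^{\f{N}{2}}$); both are standard and interchangeable here.
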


\begin{Remark}
The solution in Theorem \ref{mt} is also unique, but we omit the
proof of the uniqueness.  The proof of uniqueness will be
same as in \cite{RD3} with a slightly modification due to the
deformation gradient. See also  \cite{QZ} for a proof.
\end{Remark}

\bigskip

\section{Estimates of an Linear Problem}

In this section, we consider the following auxiliary linear system:
\begin{equation}\label{31}
\begin{cases}
\partial_t \r+\u\cdot\nabla \r+\Lambda d=\mathfrak{L},\\
\partial_t d+\u\cdot\nabla d-\nu\D d-2\Lambda \r=\mathfrak{M},\\
\partial_t\O+\u\cdot\nabla\O-\mu\D\O+\Lambda(E-E^\top)=\mathfrak{N},\\
\partial_t (E^\top-E)+\u\cdot\nabla (E^\top-E)+\Lambda\O=\mathfrak{Q},\\
\mathcal{E}_t+\u\cdot\nabla\mathcal{E}+2\Lambda d=\mathfrak{K},
\end{cases}
\end{equation}
where $\mathfrak{L}, \mathfrak{M}, \mathfrak{N}, \mathfrak{Q},
\mathfrak{J}, \mathfrak{K}$, and $\u$ are given functions, and
\eqref{EE} gives a relation of $E, E-E^\top,$ and $\mathcal{E}$.
We remark that, as in \eqref{6402}, to obtain estimates on
$\mathcal{E}$, we need to rewrite the second equation of
\eqref{31} as
\begin{equation}\label{x31}
\partial_t d+\u\cdot\nabla d-\nu\D d-2\Lambda \mathcal{E}=\mathfrak{J},
\end{equation}
under the constraint
\begin{equation}\label{x32}
2\Lambda \r+\mathfrak{M}=2\Lambda \mathcal{E}+\mathfrak{J}.
\end{equation}

For this system, we have the following estimate:

\begin{Proposition}\label{p1}
Let $(\r, d, \O, E-E^\top, \mathcal{E})$ be a solution of \eqref{31} on $[0,T)$, and
$$V(t):=\int_0^t\|\u(s)\|_{B^{\f{N}{2}+1}}ds.$$
Under the condition \eqref{x32}, the following estimate holds  on
$[0,T)$:
\begin{equation*}
\begin{split}
&\|\r(t)\|_{\tilde{B}^{\f{N}{2}-1,\f{N}{2}}}+\|E(t)\|_{\tilde{B}^{\f{N}{2}-1,\f{N}{2}}}
+\|d(t)\|_{B^{\f{N}{2}-1}}+\|\O(t)\|_{B^{\f{N}{2}-1}}\\&\qquad+\int_0^t\left(\|\r(s)\|_{\tilde{B}^{\f{N}{2}+1,\f{N}{2}}}
+\|d(s)\|_{B^{\f{N}{2}+1}}+\|E(s)\|_{\tilde{B}^{\f{N}{2}+1,\f{N}{2}}}+\|\O(s)\|_{B^{\f{N}{2}+1}}\right)ds\\&\quad\le
Ce^{CV(t)}\Big\{\|\r_0\||_{\tilde{B}^{\f{N}{2}-1,\f{N}{2}}}+\|E_0\|_{\tilde{B}^{\f{N}{2}-1,\f{N}{2}}}
+\|d_0\|_{B^{\f{N}{2}-1}}+\|\O_0\|_{B^{\f{N}{2}-1}}\\
&\qquad+\int_0^te^{-CV(s)}\Big(\|\mathfrak{L}\|_{\tilde{B}^{\f{N}{2}-1,\f{N}{2}}}+\|\mathfrak{M}\|_{B^{\f{N}{2}-1}}+\|\mathfrak{N}\|_{B^{\f{N}{2}-1}}
+\|\mathfrak{Q}\|_{\tilde{B}^{\f{N}{2}-1,\f{N}{2}}}\\
&\quad\qquad+\|\mathfrak{J}\|_{B^{\f{N}{2}-1}}
+\|\mathfrak{K}\|_{\tilde{B}^{\f{N}{2}-1,\f{N}{2}}}\Big)ds\Big\},
\end{split}
\end{equation*}
where $C$ depends only on $N$.
\end{Proposition}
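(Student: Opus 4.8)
The plan is to apply a Littlewood-Paley dyadic decomposition to the system \eqref{31} and derive, frequency block by frequency block, a Lyapunov-type energy inequality; one then sums over $q\in\mathbb{Z}$ with the appropriate weights to recover the hybrid Besov norms. Fix $q\in\mathbb{Z}$ and apply $\D_q$ to each equation of \eqref{31}. The convection terms $\u\cdot\nabla(\cdot)$ are handled by Lemma \ref{cl} (with $G$ the identity or a homogeneous operator of degree $0$ or $1$ as needed for the various unknowns), which produces the factor $\|\u\|_{B^{1+N/2}}$ multiplying the corresponding Besov norm of the unknown, hence after integration in time the exponential weight $e^{CV(t)}$. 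The parabolic terms $-\nu\D d$ and $-\mu\D\O$ give the smoothing: on the block $\D_q d$ they contribute $\nu 2^{2q}\|\D_q d\|_{L^2}^2$, which after dividing by $\|\D_q d\|_{L^2}$ and integrating in time yields the $L^1_T(B^{N/2+1})$ norm. The remaining structure is the skew-symmetric coupling between the first-order operators $\Lambda$: namely $d\leftrightarrow\r$ (coefficient $2\Lambda$ vs $\Lambda$), $\O\leftrightarrow(E-E^\top)$ (coefficient $\Lambda$ vs $\Lambda$), $d\leftrightarrow\mathcal{E}$ (coefficient $2\Lambda$ vs $2\Lambda$).

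The core of the argument is to build the right energy functional at frequency $q$ that makes the dissipation of the non-parabolic quantities $\r$, $\mathcal{E}$, $E-E^\top$ (and the low-frequency part of $d$, $\O$) visible. Following the Danchin-type scheme for compressible Navier-Stokes, one takes for the $(\r,d)$ block an energy of the form $\|\D_q d\|_{L^2}^2 + 2\|\D_q\r\|_{L^2}^2 + \kappa\, 2^{-q}(\D_q d\,|\,\Lambda\D_q\r)$ for a small constant $\kappa$; differentiating in time, the cross term $-2\Lambda\r$ in the $d$-equation and $+\Lambda d$ in the $\r$-equation cancel at leading order, while the cross term interacting with $\nu\D d$ produces a genuinely negative contribution $-c\,2^q\|\D_q\r\|_{L^2}\cdot(\cdots)$ controlling $\r$ in low frequency, and the $-2\Lambda\r$ hitting the cross term produces $-c\,2^{q}\|\D_q\r\|^2_{L^2}$-type dissipation. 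The same device is applied to the $(\O, E-E^\top)$ block with energy $\|\D_q\O\|_{L^2}^2+\|\D_q(E-E^\top)\|_{L^2}^2+\kappa\,2^{-q}(\D_q\O\,|\,\Lambda\D_q(E-E^\top))$, using $\mu\D\O$ for the damping; and to the $(d,\mathcal{E})$ block via \eqref{x31}, which is the reason the constraint \eqref{x32} is imposed — it guarantees that the $d$-equation used for the $\mathcal{E}$-estimate is the genuine evolution equation, so the cross term $(\D_q d\,|\,\Lambda\D_q\mathcal{E})$ is controlled consistently and yields dissipation of $\mathcal{E}$. Adding the three block energies with suitable weights gives, for a constant $c_0>0$,
\begin{equation*}
\f{d}{dt}\,\mathcal{G}_q^2 + c_0\,2^{2q}\big(\|\D_q d\|_{L^2}^2+\|\D_q\O\|_{L^2}^2\big) + c_0\,2^{q}\mathcal{G}_q\cdot\big(\cdots\big) \lesssim \|\u\|_{B^{1+N/2}}\,\mathcal{G}_q^2 + \mathcal{G}_q\cdot\|(\D_q\text{source terms})\|_{L^2},
\end{equation*}
where $\mathcal{G}_q^2\thickapprox \|\D_q\r\|_{L^2}^2+\|\D_q d\|_{L^2}^2+\|\D_q\O\|_{L^2}^2+\|\D_q(E-E^\top)\|_{L^2}^2+\|\D_q\mathcal{E}\|_{L^2}^2$ by virtue of \eqref{EE}, and the second dissipative term on the left is $2^{q}$ times $\mathcal{G}_q$ times $\mathcal{G}_q$ itself in low frequency (giving back $B^{N/2+1}$-integrability of $\r$, $E$) and $2^{q}\cdot\mathcal{G}_q\cdot$(high-frequency part) in high frequency, consistent with the $\tilde B^{N/2+1,N/2}$ exponents on the left-hand side of the Proposition.

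From this block inequality one divides by $\mathcal{G}_q$ to get a linear differential inequality for $\mathcal{G}_q$, then applies Gronwall with the weight $e^{CV(t)}$ to absorb the convection contribution, obtaining
\begin{equation*}
\mathcal{G}_q(t) + c_0\int_0^t\!\big(2^{2q}(\|\D_q d\|_{L^2}+\|\D_q\O\|_{L^2}) + 2^{q}\,\mathcal{G}_q^{\text{hf/lf}}\big)\,ds \le Ce^{CV(t)}\Big(\mathcal{G}_q(0)+\int_0^t e^{-CV(s)}\|\D_q(\text{sources})\|_{L^2}\,ds\Big).
\end{equation*}
Finally I multiply by $2^{q(N/2-1)}$ for $q\le 0$ and by $2^{qN/2}$ for $q\ge 1$ (the hybrid weights), sum over $q\in\mathbb{Z}$, use $\sum_q\alpha_q\le 1$ from Lemma \ref{cl} to keep the convection estimates summable, and invoke \eqref{EE} once more to pass between $\|E\|$, $\|E-E^\top\|$ and $\|\mathcal{E}\|$; the dissipative terms reassemble into $\|d\|_{L^1_T(B^{N/2+1})}$, $\|\O\|_{L^1_T(B^{N/2+1})}$, $\|\r\|_{L^1_T(\tilde B^{N/2+1,N/2})}$, $\|E\|_{L^1_T(\tilde B^{N/2+1,N/2})}$, which is exactly the claimed bound. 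The main obstacle is the second step: choosing the precise cross-term coefficients and the relative weights of the three block energies so that all three dissipative mechanisms ($\r$ from the $(\r,d)$ coupling, $E-E^\top$ from the $(\O,E-E^\top)$ coupling, $\mathcal{E}$ from the $(d,\mathcal{E})$ coupling via \eqref{x31}) are simultaneously positive and none is destroyed by the others — this is where the skew-symmetry of the $\Lambda$-couplings and the algebraic consistency condition \eqref{x32} must be used carefully, and it is also where the low-frequency versus high-frequency split (and hence the hybrid exponents) is forced on us.
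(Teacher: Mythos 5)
Your proposal follows essentially the same route as the paper's proof: localize with $\D_q$, build frequency-block energy functionals corrected by the cross terms $(\Lambda\D_q\r|\D_q d)$, $(\Lambda\D_q(E^\top-E)|\D_q\O)$, $(\Lambda\D_q\mathcal{E}|\D_q d)$ to extract dissipation from the skew-symmetric $\Lambda$-couplings (using \eqref{x31} under the constraint \eqref{x32} for the $\mathcal{E}$-block), treat low and high frequencies with different normalizations to match the hybrid exponents, handle convection via Lemma \ref{cl} with the $e^{CV}$ weight, and sum over $q$. The only differences from the paper are presentational (the paper absorbs the convection by the change of unknown $\tilde\r=e^{-KV(t)}\r$ rather than a terminal Gronwall step, and recovers the $L^1_T(B^{\f{N}{2}+1})$ smoothing of $d,\O$ in a separate fourth step treating $\Lambda\r$, $\Lambda E$ as sources), so the argument is correct as outlined.
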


\begin{Remark}\label{X1}
Notice that the constraint \eqref{x32} is always satisfied by our
system \eqref{x64} in view of the divergence property of the
deformation gradient $E$, and $\mathfrak{J}$ will be given as the
right-hand side of \eqref{6402}. This implies that the estimates
in Proposition \ref{p1} also hold for solutions to \eqref{x64}.
\end{Remark}

%%%%%%

\begin{proof}[Proof of Proposition \ref{p1}]
To prove this proposition, we first localize \eqref{31} in low and
high frequencies according to the {Littlewood-Paley}
decomposition. We then use an energy method to estimate each
dyadic block. To this end, we will divide our proof into four
steps.

Let $(\r, d, \O, E)$ be a solution of \eqref{31} and $K>0$. Define
$$\tilde{\r}=e^{-KV(t)}\r,\quad
\tilde{d}=e^{-KV(t)}d,\quad\tilde{E}^\top-\tilde{E}=e^{-KV(t)}(E^\top-E),$$$$\tilde{\O}=e^{-KV(t)}\O,
\quad\tilde{\mathcal{E}}=e^{-KV(t)}\mathcal{E},
$$ and
$$\tilde{\mathfrak{L}}=e^{-KV(t)}\mathfrak{L},\quad
\tilde{\mathfrak{M}}=e^{-KV(t)}\mathfrak{M},\quad\tilde{\mathfrak{N}}=e^{-KV(t)}\mathfrak{N},$$
$$\tilde{\mathfrak{J}}=e^{-KV(t)}\mathfrak{J},\quad
\tilde{\mathfrak{K}}=e^{-KV(t)}\mathfrak{K}.$$ Applying the
operator $\D_q$ to \eqref{31} and \eqref{x31}, we deduce that
$(\D_q\tilde{\r}, \D_q\tilde{d}, \D_q\tilde{\O}, \D_q\tilde{E})$
satisfies
\begin{equation}\label{32}
\begin{cases}
\partial_t \D_q\tilde{\r}+\D_q(\u\cdot\nabla \tilde{\r})+\Lambda\D_q \tilde{d}=\D_q\tilde{\mathfrak{L}}-KV'(t)\D_q\tilde{\r},\\
\partial_t \D_q\tilde{d}+\D_q(\u\cdot\nabla \tilde{d})-\nu\D \D_q\tilde{d}-2\Lambda\D_q \tilde{\r}
=\D_q\tilde{\mathfrak{M}}-KV'(t)\D_q\tilde{d},\\
\partial_t\D_q\tilde{\O}+\D_q(\u\cdot\nabla\tilde{\O})-\mu\D\D_q\tilde{\O}+\Lambda\D_q(\tilde{E}-\tilde{E}^\top)=\D_q\tilde{\mathfrak{N}}
-KV'(t)\D_q\tilde{\O},\\
\partial_t \D_q(\tilde{E}^\top-\tilde{E})+\D_q(\u\cdot\nabla
(\tilde{E}^\top-\tilde{E}))+\Lambda\tilde{\O}=\D_q\mathfrak{\tilde{Q}}
-KV'(t)\D_q(\tilde{E}^\top-\tilde{E}),\\
\partial_t \D_q\tilde{d}+\D_q(\u\cdot\nabla\tilde{d})-\nu\D \D_q\tilde{d}-2\Lambda\D_q \tilde{\mathcal{E}}=\D_q\tilde{\mathfrak{J}},\\
\partial_t\D_q\tilde{\mathcal{E}}+\D_q(\u\cdot\nabla\tilde{\mathcal{E}})+2\Lambda \D_q\tilde{d}=\D_q\tilde{\mathfrak{K}}.
\end{cases}
\end{equation}

Denote
\begin{equation*}
\begin{split}
g_q&:=2^{q\left(\f{N}{2}-1\right)}\Big(2\|\D_q\tilde{\r}\|^2_{L^2}+2\|\D_q\tilde{d}\|^2_{L^2}+\|\D_q(\tilde{E}^\top-\tilde{E})\|^2_{L^2}
+\|\D_q\tilde{\mathcal{E}}\|^2_{L^2}
+\|\D_q\tilde{\O}\|^2_{L^2}\\&\quad-\f{\nu}{\eta}(\Lambda\D_q(\tilde{E}^\top-\tilde{E})|\D_q\tilde{\O})
-\f{\nu}{\eta}(\Lambda\D_q\tilde{\r}|\D_q\tilde{d})-\f{\nu}{\eta}(\Lambda\D_q\tilde{\mathcal{E}}|\D_q\tilde{d}))\Big)^{\f{1}{2}}
\end{split}
\end{equation*}
for $q\le q_0$ with
$\eta=\max\left\{\f{4^{q_0}\nu^2+3}{2},\nu,\f{\nu}{\mu},\f{4^{q_0}\mu\nu}{2}\right\}+1$;
\begin{equation*}
\begin{split}
g_q&:=2^{q(\f{N}{2}-1)}\Big(\|\Lambda\D_q\tilde{\r}\|_{L^2}^2+\|\Lambda\D_q(\tilde{E}^\top-\tilde{E})\|_{L^2}^2
+\|\Lambda\D_q\tilde{\mathcal{E}}\|_{L^2}^2
+\|\D_q\tilde{d}\|_{L^2}^2+\|\D_q\tilde{\O}\|_{L^2}^2
\\&\qquad-(\Lambda\D_q\tilde{\r}|\D_q\tilde{d})-(\Lambda\D_q(\tilde{E}^\top-\tilde{E})|\D_q\tilde{\O})
-(\Lambda\D_q\tilde{\mathcal{E}}|\D_q\tilde{d})\Big)^{\f{1}{2}}
\end{split}
\end{equation*}
for $q>q_0$,  where $\beta_1=\f{2}{\nu}$, $\beta_2=\f{2}{\mu}$,
$\gamma=\max\left\{\f{2}{\mu^2},\f{5}{\nu^2}\right\}+1$,  and $q_0$
is chosen to satisfy
\begin{equation}\label{301}
\|\Lambda\D_qf\|_{L^2}\ge 2\gamma\|\D_q f\|_{L^2}\quad\textrm{for
all}\quad q\ge q_0.
\end{equation}
Due to the fact supp$\mathcal{F}(\D_q\r)\subset 2^q\mathcal{C}$
and supp$\mathcal{F}(\D_qE)\subset 2^q\mathcal{C}$, one deduce
that
\begin{equation}\label{33}
\left(\f{g_q}{2^{q\phi^{\f{N}{2}-1,\f{N}{2}}(q)}(\|\D_q\tilde{\r}\|_{L^2}+\|\D_q\tilde{E}\|_{L^2})
+2^{q(\f{N}{2}-1)}(\|\D_q\tilde{d}\|_{L^2}+\|\D_q\tilde{\O}\|_{L^2})}\right)^{\pm
1}\le C
\end{equation}
for a universal constant $C$.

The first two steps of the proof are devoted to getting the
following inequality:
\begin{equation}\label{34}
\begin{split}
&\f{1}{2}\f{d}{dt}g_q^2+\kappa 2^{q\phi^{N, N-2}(q)}
\Big(\|\D_q\tilde{\r}\|_{L^2}^2+\|\Lambda\D_q(\tilde{E}^\top-\tilde{E})\|_{L^2}^2+\|\Lambda\D_q\tilde{\mathcal{E}}\|_{L^2}^2
+\|\D_q\tilde{\O}\|_{L^2}^2+\|\D_q\tilde{d}\|_{L^2}^2\Big)\\
&\le C\alpha_q g_q\Big(\|\tilde{\mathfrak{L}}\|_{\tilde{B}^{\f{N}{2}-1,\f{N}{2}}}+\|\tilde{\mathfrak{Q}}\|_{\tilde{B}^{\f{N}{2}-1,\f{N}{2}}}
+\|\tilde{\mathfrak{K}}\|_{\tilde{B}^{\f{N}{2}-1,\f{N}{2}}}+\|\tilde{\mathfrak{J}}\|_{B^{\f{N}{2}-1}}+\|\tilde{\mathfrak{M}}\|_{B^{\f{N}{2}-1}}
+\|\tilde{\mathfrak{N}}\|_{B^{\f{N}{2}-1}}\\&\qquad+V'(t)(\|\tilde{\r}\|_{\tilde{B}^{\f{N}{2}-1,\f{N}{2}}}
+\|\tilde{E}^\top-\tilde{E}\|_{\tilde{B}^{\f{N}{2}-1,\f{N}{2}}}+\|\tilde{\mathcal{E}}\|_{\tilde{B}^{\f{N}{2}-1,\f{N}{2}}}+\|\tilde{d}\|_{B^{\f{N}{2}-1}}+\|\tilde{\O}\|_{B^{\f{N}{2}-1}})\Big)\\&\qquad-KV'g_q^2,
\end{split}
\end{equation}
where $\kappa$ is a universal constant.
\medskip

{\bf First Step: Low Frequencies.}\quad
Suppose $q\le q_0$ and define
\begin{equation*}
\begin{split}
f_q^2&=2\|\D_q\tilde{\r}\|^2_{L^2}+2\|\D_q\tilde{d}\|^2_{L^2}+\|\D_q(\tilde{E}^\top-\tilde{E})\|^2_{L^2}+\|\D_q\tilde{\mathcal{E}}\|^2_{L^2}
+\|\D_q\tilde{\O}\|^2_{L^2}\\&\quad-\f{\nu}{\eta}(\Lambda\D_q(\tilde{E}^\top-\tilde{E})|\D_q\tilde{\O})
-\f{\nu}{\eta}(\Lambda\D_q\tilde{\r}|\D_q\tilde{d})-\f{\nu}{\eta}(\Lambda\D_q\tilde{\mathcal{E}}|\D_q\tilde{d}).
\end{split}
\end{equation*}
Taking the $L^2$-scalar product of the first equation of \eqref{32}
with $\D_q\tilde{\r}$, of the second equation with
$\D_q\tilde{d}$, of the third equation with $\D_q\tilde{\O}$, and
of the fourth equation with $\D_q(\tilde{E}^\top-\tilde{E})$, we
obtain the following four identities:
\begin{equation}\label{35}
\begin{split}
\f{1}{2}\f{d}{dt}\|\D_q\tilde{\r}\|_{L^2}^2+(\D_q(\u\cdot\nabla\tilde{\r})|\D_q\tilde{\r})&+(\Lambda\D_q\tilde{d}|\D_q\tilde{\r})
=(\D_q\tilde{\mathfrak{L}}|\D_q\tilde{\r})-KV'\|\D_q\tilde{\r}\|_{L^2}^2;
\end{split}
\end{equation}
\begin{equation}\label{36}
\begin{split}
&\f{1}{2}\f{d}{dt}\|\D_q\tilde{d}\|_{L^2}^2+\nu\|\Lambda\D_q\tilde{d}\|_{L^2}^2+(\D_q(\u\cdot\nabla\tilde{d})|\D_q\tilde{d})
-2(\Lambda\D_q\tilde{\r}|\D_q\tilde{d})\\&\quad=(\D_q\tilde{\mathfrak{M}}|\D_q\tilde{d})-KV'\|\D_q\tilde{d}\|_{L^2}^2;
\end{split}
\end{equation}
\begin{equation}\label{3601}
\begin{split}
&\f{1}{2}\f{d}{dt}\|\D_q\tilde{\O}\|_{L^2}+(\D_q(\u\cdot\nabla\tilde{\O})|\D_q\tilde{\O})+\mu\|\Lambda\D_q\tilde{\O}\|^2_{L^2}
+(\Lambda\D_q(\tilde{E}-\tilde{E}^\top)|\D_q\tilde{\O})\\
&\quad=(\D_q\tilde{\mathfrak{N}}|\D_q\tilde{\O})-KV'\|\D_q\tilde{\O}\|_{L^2}^2;
\end{split}
\end{equation}
\begin{equation}\label{37}
\begin{split}
&\f{1}{2}\f{d}{dt}\|\D_q(\tilde{E}^\top-\tilde{E})\|_{L^2}^2+(\D_q((\u\cdot\nabla(\tilde{E}^\top-\tilde{E})))|\D_q(\tilde{E}^\top-\tilde{E}))
+(\Lambda\D_q\tilde{\O}|\D_q(\tilde{E}^\top-\tilde{E}))
\\&\quad=(\D_q(\mathfrak{\tilde{Q}}^\top-\mathfrak{\tilde{Q}})|\D_q(\tilde{E}^\top-\tilde{E}))-KV'\|(\tilde{E}^\top-\tilde{E})\|_{L^2}^2.
\end{split}
\end{equation}
And, we also have, from fifth and sixth equation in \eqref{32}
\begin{equation}\label{3701}
\begin{split}
&\f{1}{2}\f{d}{dt}\|\D_q\tilde{d}\|_{L^2}^2+\nu\|\Lambda\D_q\tilde{d}\|_{L^2}^2+(\D_q(\u\cdot\nabla\tilde{d})|\D_q\tilde{d})
-2(\Lambda\D_q\tilde{\mathcal{E}}|\D_q\tilde{d})\\&\quad=(\D_q\tilde{\mathfrak{J}}|\D_q\tilde{d})-KV'\|\D_q\tilde{d}\|_{L^2}^2;
\end{split}
\end{equation}
\begin{equation}\label{3702}
\begin{split}
&\f{1}{2}\f{d}{dt}\|\D_q\tilde{\mathcal{E}}\|_{L^2}^2+(\D_q(\u\cdot\nabla\tilde{\mathcal{E}})|\D_q\tilde{\mathcal{E}})
+2(\Lambda\D_q\tilde{d}|\D_q\tilde{\mathcal{E}})
=(\D_q\mathfrak{\tilde{K}}|\D_q\tilde{\mathcal{E}})-KV'\|\tilde{\mathcal{E}}\|_{L^2}^2.
\end{split}
\end{equation}

For estimates of the term $(\Lambda\D_q\tilde{\r}|\D_q\tilde{d})$,
we apply $\Lambda$ to the first equation in \eqref{32} and take
the $L^2$-scalar product with $\D_q\tilde{d}$, then take the
scalar product of the second equation with $\Lambda\D_q\tilde{\r}$
and sum both equalities to yield
\begin{equation}\label{38}
\begin{split}
&\f{d}{dt}(\Lambda\D_q\tilde{\r}|\D_q\tilde{d})+\|\Lambda\D_q\tilde{d}\|_{L^2}^2
-2\|\Lambda\D_q\tilde{\r}\|_{L^2}^2+(\Lambda\D_q\tilde{\r}|\D_q(\u\cdot\nabla\tilde{d}))
\\&\qquad+(\Lambda\D_q(\u\cdot\nabla\tilde{\r})|\D_q\tilde{d})+\nu(\Lambda^2\D_q\tilde{d}|\Lambda\D_q\tilde{\r})
\\
&\quad=(\Lambda\D_q\tilde{\mathfrak{L}}|\D_q\tilde{d})+(\Lambda\D_q\tilde{\r}|\D_q\tilde{\mathfrak{M}})
-2KV'(\Lambda\D_q\tilde{\r}|\D_q\tilde{d}).
\end{split}
\end{equation}

For estimates of the term
$(\Lambda\D_q(\tilde{E}^\top-\tilde{E})|\D_q\tilde{\O})$, we apply
$\Lambda$ to the fourth equation in \eqref{32} and take the
$L^2$-scalar product with $\D_q\tilde{\O}$, then take the scalar
product of the third equation with
$\Lambda\D_q(\tilde{E}^\top-\tilde{E})$ and sum both equalities to
yield
\begin{equation}\label{3901}
\begin{split}
&\f{d}{dt}(\Lambda\D_q(\tilde{E}^\top-\tilde{E})|\D_q\tilde{\O})-\|\Lambda\D_q(\tilde{E}^\top-\tilde{E})\|_{L^2}^2
+\|\Lambda\D_q\tilde{\O}\|_{L^2}^2+(\Lambda\D_q(\tilde{E}^\top-\tilde{E})|\D_q(\u\cdot\nabla\tilde{\O}))
\\&\qquad+(\Lambda\D_q(\u\cdot\nabla(\tilde{E}^\top-\tilde{E}))|\D_q\tilde{\O})
+\mu(\Lambda^2\D_q\tilde{\O}|\Lambda\D_q(\tilde{E}^\top-\tilde{E}))\\
&\quad=(\Lambda\D_q\mathfrak{\tilde{Q}}|\D_q\tilde{\O})
+(\Lambda\D_q(\tilde{E}^\top-\tilde{E})|\D_q\tilde{\mathfrak{N}})-2KV'(\Lambda\D_q(\tilde{E}^\top-\tilde{E})|\D_q\tilde{\O}).
\end{split}
\end{equation}

For estimates of the term
$(\Lambda\D_q\tilde{\mathcal{E}}|\D_q\tilde{d})$, we apply
$\Lambda$ to the last equation in \eqref{32} and take the
$L^2$-scalar product with $\D_q\tilde{d}$, then take the scalar
product of the fifth equation with
$\Lambda\D_q\tilde{\mathcal{E}}$ and sum both equalities to yield
\begin{equation}\label{3902}
\begin{split}
&\f{d}{dt}(\Lambda\D_q\tilde{\mathcal{E}}|\D_q\tilde{d})+\|\Lambda\D_q\tilde{d}\|_{L^2}^2
-2\|\Lambda\D_q\tilde{\mathcal{E}}\|_{L^2}^2+(\Lambda\D_q\tilde{\mathcal{E}}|\D_q(\u\cdot\nabla\tilde{d}))
\\&\qquad+(\Lambda\D_q(\u\cdot\nabla\tilde{\mathcal{E}})|\D_q\tilde{d})+\nu(\Lambda^2\D_q\tilde{d}|\Lambda\D_q\tilde{\mathcal{E}})
\\
&\quad=(\Lambda\D_q\tilde{\mathfrak{K}}|\D_q\tilde{d})+(\Lambda\D_q\tilde{\mathcal{E}}|\D_q\tilde{\mathfrak{J}})
-2KV'(\Lambda\D_q\tilde{\mathcal{E}}|\D_q\tilde{d}).
\end{split}
\end{equation}

Taking linear combination of \eqref{35}-\eqref{3902}, we obtain,
\begin{equation}\label{3101}
\begin{split}
&\f{1}{2}\f{d}{dt}f_q^2+\f{\nu}{2\eta}\Big(2\|\Lambda\D_q\tilde{\r}\|_{L^2}^2
+\|\Lambda\D_q(\tilde{E}^\top-\tilde{E})\|_{L^2}^2+2\|\Lambda\D_q\tilde{\mathcal{E}}\|_{L^2}^2+(2\eta-3)\|\Lambda\D_q\tilde{d}\|_{L^2}
\\&\qquad-(\nu\Lambda^2\D_q\tilde{d}|\Lambda\D_q\tilde{\r})-(\mu\Lambda^2\D_q\tilde{\O}|\Lambda\D_q(\tilde{E}^\top-\tilde{E})
-\nu(\Lambda^2\D_q\tilde{d}|\Lambda\D_q\tilde{\mathcal{E}}))\Big)+KV'f_q^2
\\&\qquad+\left(\mu-\f{\nu}{2\eta}\right)\|\Lambda\D_q\tilde{\O}\|_{L^2}^2\\
&\quad=\mathcal{X},
\end{split}
\end{equation}
where
\begin{equation*}
\begin{split}
\mathcal{X}&:=2(\D_q\tilde{\mathfrak{L}}|\D_q\tilde{\r})
+(\D_q\tilde{\mathfrak{M}}|\D_q\tilde{d})+(\D_q\tilde{\mathfrak{N}}|\D_q\tilde{\O})+(\D_q\tilde{\mathfrak{Q}}|\D_q(\tilde{E}^\top-\tilde{E}))
+(\D_q\tilde{\mathfrak{J}}|\D_q\tilde{d})\\&\qquad+(\D_q\tilde{\mathfrak{K}}|\D_q\tilde{\mathcal{E}})-2(\D_q(\u\cdot\nabla\tilde{\r})|\D_q\tilde{\r})
-2(\D_q(\u\cdot\nabla\tilde{d})|\D_q\tilde{d})
\\&\qquad-(\D_q(\u\cdot\nabla(\tilde{E}^\top-\tilde{E}))|\D_q(\tilde{E}^\top-\tilde{E}))-(\D_q(\u\cdot\nabla\tilde{\O})|\D_q\tilde{\O})
-(\D_q(\u\cdot\nabla\tilde{\mathcal{E}})|\D_q\tilde{\mathcal{E}})
\\&\qquad+\f{\nu}{2\eta}\Big\{(\Lambda\D_q\tilde{\r}|\D_q(\u\cdot\nabla\tilde{d}))
+(\Lambda\D_q(\u\cdot\nabla\tilde{\r})|\D_q\tilde{d})
+(\Lambda\D_q(\tilde{E}^\top-\tilde{E})|\D_q(\u\cdot\nabla\tilde{\O}))
\\&\qquad+(\Lambda\D_q(\u\cdot\nabla(\tilde{E}^\top-\tilde{E}))|\D_q\tilde{\O})-(\Lambda\D_q\tilde{\mathfrak{L}}|\D_q\tilde{d})
-(\Lambda\D_q(\tilde{E}^\top-\tilde{E})|\D_q\tilde{\mathfrak{N}})\\&\qquad-(\Lambda\D_q\tilde{\r}|\D_q\tilde{\mathfrak{M}})
-(\Lambda\D_q\mathfrak{\tilde{Q}}|\D_q\tilde{\O})+(\Lambda\D_q\tilde{\mathcal{E}}|\D_q(\u\cdot\nabla\tilde{d}))
+(\Lambda\D_q(\u\cdot\nabla\tilde{\mathcal{E}})|\D_q\tilde{d})\\&\qquad-(\Lambda\D_q\tilde{K}|\D_q\tilde{d})
-(\Lambda\D_q\tilde{\mathcal{E}}|\D_q\tilde{J})\Big\}.
\end{split}
\end{equation*}

As $q\le q_0$, there exists a constant $c_0\ge 1$, which depends
on $\lambda,\mu$, such that
\begin{equation}\label{311}
\f{1}{c_0}f_q^2\le\|\D_q\tilde{\r}\|_{L^2}^2+\|\D_q\tilde{d}\|_{L^2}^2+\|\D_q(\tilde{E}^\top-\tilde{E})\|_{L^2}^2+\|\D_q\tilde{\mathcal{E}}\|_{L^2}^2
+\|\D_q\tilde{\O}\|_{L^2}^2\le
c_0f_q^2,
\end{equation}
and this equivalence implies that there exists a universal
positive constant $\kappa$ depending on $\lambda$ and $\mu$, such
that
\begin{equation}\label{312}
\begin{split}
&\f{\nu}{2\eta}\Big(2\|\Lambda\D_q\tilde{\r}\|_{L^2}^2
+\|\Lambda\D_q(\tilde{E}^\top-\tilde{E})\|_{L^2}^2+2\|\Lambda\D_q\tilde{\mathcal{E}}\|_{L^2}^2+(2\eta-3)\|\Lambda\D_q\tilde{d}\|_{L^2}\\
&\qquad-(\nu\Lambda^2\D_q\tilde{d}|\Lambda\D_q\tilde{\r})-(\mu\Lambda^2\D_q\tilde{\O}|\Lambda\D_q(\tilde{E}^\top-\tilde{E})
-\nu(\Lambda^2\D_q\tilde{d}|\Lambda\D_q\tilde{\mathcal{E}}))\Big)\\
&\quad+\left(\mu-\f{\nu}{2\eta}\right)\|\Lambda\D_q\tilde{\O}\|_{L^2}^2
\\&\quad\ge\kappa
2^{2q}\Big(\|\D_q\tilde{\r}\|_{L^2}^2+\|\D_q(\tilde{E}^\top-\tilde{E})\|_{L^2}^2+\|\D_q\tilde{\mathcal{E}}\|_{L^2}^2
+\|\D_q\tilde{\O}\|_{L^2}^2+\|\D_q\tilde{d}\|_{L^2}^2\Big).
\end{split}
\end{equation}

For terms on the right-hand side of \eqref{3101}, we use Lemma
\ref{cl}, \eqref{311}, and the Cauchy-Schwarz inequality to obtain
\begin{equation}\label{313}
\begin{split}
\left|\mathcal{X}\right|&\le C
f_q\Big(\|\D_q\tilde{\mathfrak{L}}\|_{L^2}+\|\D_q\tilde{\mathfrak{M}}\|_{L^2}+\|\D_q\tilde{\mathfrak{N}}\|_{L^2}+\|\D_q\tilde{\mathfrak{Q}}\|_{L^2}
+\|\D_q\tilde{\mathfrak{K}}\|_{L^2}+\|\D_q\tilde{\mathfrak{J}}\|_{L^2}
\\&\qquad\qquad+2^{-q(\f{N}{2}-1)}\alpha_q
V'(\|\tilde{\r}\|_{\tilde{B}^{\f{N}{2}-1,\f{N}{2}}}+\|\tilde{E}^\top-\tilde{E}\|_{\tilde{B}^{\f{N}{2}-1,\f{N}{2}}}
+\|\tilde{\mathcal{E}}\|_{\tilde{B}^{\f{N}{2}-1,\f{N}{2}}}
\\&\qquad\qquad+\|\tilde{d}\|_{B^{\f{N}{2}-1}}+\|\tilde{\O}\|_{B^{\f{N}{2}-1}})\Big).
\end{split}
\end{equation}
Hence, combining \eqref{3101}, \eqref{311}, \eqref{312},
\eqref{313} together, we obtain
\begin{equation}\label{315}
\begin{split}
&\f{1}{2}\f{d}{dt}f_q^2+\kappa
2^{2q}\Big(\|\D_q\tilde{\r}\|_{L^2}^2+\|\Lambda\D_q(\tilde{E}^\top-\tilde{E})\|_{L^2}^2+\|\Lambda\D_q\tilde{\mathcal{E}}\|_{L^2}^2
+\|\D_q\tilde{\O}\|_{L^2}^2+\|\D_q\tilde{d}\|_{L^2}^2\Big)\\
&\le Cf_q\Big(\|\D_q\tilde{\mathfrak{L}}\|_{L^2}+\|\D_q\tilde{\mathfrak{M}}\|_{L^2}+\|\D_q\tilde{\mathfrak{N}}\|_{L^2}+\|\D_q\tilde{\mathfrak{Q}}\|_{L^2}
+\|\D_q\tilde{\mathfrak{K}}\|_{L^2}+\|\D_q\tilde{\mathfrak{J}}\|_{L^2}\\&\qquad+2^{-q(\f{N}{2}-1)}\alpha_q
V'(\|\tilde{\r}\|_{\tilde{B}^{\f{N}{2}-1,\f{N}{2}}}+\|\tilde{E}^\top-\tilde{E}\|_{\tilde{B}^{\f{N}{2}-1,\f{N}{2}}}
+\|\tilde{\mathcal{E}}\|_{\tilde{B}^{\f{N}{2}-1,\f{N}{2}}}
+\|\tilde{d}\|_{B^{\f{N}{2}-1}}\\&\qquad+\|\tilde{\O}\|_{B^{\f{N}{2}-1}}
)\Big)-KV'f_q^2.
\end{split}
\end{equation}

{\bf Second Step: High Frequencies.}\quad
In this step, we assume $q>q_0$. We apply the operator
$\Lambda$ to the first equation of \eqref{32}, multiply by
$\Lambda\D_q\tilde{\r}$ and integrate over $\R^N$ to yield
\begin{equation}\label{316}
\begin{split}
\f{1}{2}\f{d}{dt}\|\Lambda\D_q\tilde{\r}\|_{L^2}^2&+(\Lambda\D_q(\u\cdot\nabla\tilde{\r})|\Lambda\D_q{\tilde{\r}})
+(\Lambda^2\D_q\tilde{d}|\Lambda\D_q\tilde{\r})
\\&=(\Lambda\D_q\tilde{\mathfrak{L}}|\Lambda\D_q\tilde{\r})-KV'\|\Lambda\D_q\tilde{\r}\|^2_{L^2}.
\end{split}
\end{equation}
Applying the operator $\Lambda$ to the fourth equation of
\eqref{32}, multiplying by $\Lambda\D_q(\tilde{E}^\top-\tilde{E})$
and integrating over $\R^N$, we get
\begin{equation}\label{317}
\begin{split}
\f{1}{2}\f{d}{dt}\|\Lambda\D_q(\tilde{E}^\top-\tilde{E})\|_{L^2}^2&+(\Lambda\D_q(\u\cdot\nabla(\tilde{E}^\top-\tilde{E}))|\Lambda\D_q(\tilde{E}^\top-\tilde{E}))
+(\Lambda^2\D_q\tilde{\O}|\Lambda\D_q(\tilde{E}^\top-\tilde{E}))
\\&=(\Lambda\D_q\mathfrak{\tilde{Q}}|\Lambda\D_q(\tilde{E}^\top-\tilde{E}))-KV'\|\Lambda\D_q(\tilde{E}^\top-\tilde{E})\|^2_{L^2}.
\end{split}
\end{equation}
Applying the operator $\Lambda$ to the sixth equation of
\eqref{32}, multiplying by $\Lambda\D_q\tilde{\mathcal{E}}$ and
integrating over $\R^N$, we obtain
\begin{equation}\label{31701}
\begin{split}
\f{1}{2}\f{d}{dt}\|\Lambda\D_q\mathcal{\tilde{E}}\|_{L^2}^2&+(\Lambda\D_q(\u\cdot\nabla\tilde{\mathcal{E}})|\Lambda\D_q{\mathcal{\tilde{E}}})
+2(\Lambda^2\D_q\tilde{d}|\Lambda\D_q\mathcal{\tilde{E}})
\\&=(\Lambda\D_q\mathfrak{\tilde{K}}|\Lambda\D_q\mathcal{\tilde{E}})-KV'\|\Lambda\D_q\mathcal{\tilde{E}}\|^2_{L^2}.
\end{split}
\end{equation}
Denoting
\begin{equation*}
\begin{split}
f_q^2&=\|\Lambda\D_q\tilde{\r}\|_{L^2}^2+\|\Lambda\D_q(\tilde{E}^\top-\tilde{E})\|_{L^2}^2+\|\Lambda\D_q\tilde{\mathcal{E}}\|_{L^2}^2
+2\gamma\|\D_q\tilde{d}\|_{L^2}^2+\gamma\|\D_q\tilde{\O}\|_{L^2}^2
\\&\quad-\beta_1(\Lambda\D_q\tilde{\r}|\D_q\tilde{d})-\beta_2(\Lambda\D_q(\tilde{E}^\top-\tilde{E})|\D_q\tilde{\O})
-2\beta_1(\Lambda\D_q\tilde{\mathcal{E}}|\D_q\tilde{d}).
\end{split}
\end{equation*}
Combining \eqref{36}, \eqref{3601}, \eqref{3701}, \eqref{38},
\eqref{3901}, \eqref{316}, and \eqref{31701}, we obtain
\begin{equation}\label{3181}
\begin{split}
&\f{1}{2}\f{d}{dt}f_q^2+\beta_1\|\Lambda\D_q\tilde{\r}\|^2_{L^2}+2\beta_1\|\Lambda\D_q\tilde{\mathcal{E}}\|^2_{L^2}+\left(2\gamma\nu-\f{5\beta_1}{2}\right)\|\Lambda\D_q\tilde{d}\|_{L^2}^2
+\left(\mu\gamma-\f{\beta_2}{2}\right)\|\Lambda\D_q\tilde{\O}\|_{L^2}^2\\&\qquad+\f{\beta_2}{2}\|\Lambda\D_q(\tilde{E}^\top-\tilde{E})\|^2_{L^2}
-2\gamma(\Lambda\D_q\tilde{\r}|\D_q\tilde{d})-2\gamma(\Lambda\D_q\tilde{\mathcal{E}}|\D_q\tilde{d})
-\gamma(\Lambda\D_q(\tilde{E}^\top-\tilde{E})|\D_q\tilde{\O})\\&\qquad+KV'f_q^2\\&\quad=\mathcal{Y},
\end{split}
\end{equation}
where
\begin{equation*}
\begin{split}
\mathcal{Y}&=\gamma(\D_q\tilde{\mathfrak{M}}|\D_q\tilde{d})+\gamma(\D_q\tilde{\mathfrak{J}}|\D_q\tilde{d})+\gamma(\D_q\tilde{\mathfrak{N}}|\D_q\tilde{\O})-2\gamma(\D_q(\u\cdot\nabla\tilde{d})|\D_q\tilde{d})
\\&\quad-\gamma(\D_q(\u\cdot\nabla\tilde{\O})|\D_q\tilde{\O})+\f{\beta_1}{2}\Big((\Lambda\D_q\tilde{\r}|\D_q(\u\cdot\nabla\tilde{d}))
+(\Lambda\D_q(\u\cdot\nabla\tilde{\r})|\D_q\tilde{d})-(\Lambda\D_q\tilde{\mathfrak{L}}|\D_q\tilde{d})
\\&\quad-(\Lambda\D_q\tilde{\r}|\D_q\tilde{\mathfrak{M}})+2(\Lambda\D_q\tilde{\mathcal{E}}|\D_q(\u\cdot\nabla\tilde{d}))
+2(\Lambda\D_q(\u\cdot\nabla\tilde{\mathcal{E}})|\D_q\tilde{d})-2(\Lambda\D_q\tilde{\mathfrak{K}}|\D_q\tilde{d})\\&\quad-2(\Lambda\D_q\tilde{\mathcal{E}}|\D_q\tilde{\mathfrak{J}}))\Big)
+\f{\beta_2}{2}\Big((\Lambda\D_q(\tilde{E}^\top-\tilde{E})|\D_q(\u\cdot\nabla\tilde{\O}))
+(\Lambda\D_q(\u\cdot\nabla(\tilde{E}^\top-\tilde{E}))|\D_q\tilde{\O})\\
&\quad-(\Lambda\D_q\mathfrak{\tilde{Q}}|\D_q\tilde{\O})
-(\Lambda\D_q(\tilde{E}^\top-\tilde{E})|\D_q\tilde{\mathfrak{N}})\Big)-(\Lambda\D_q(\u\cdot\nabla\tilde{\r})|\Lambda\D_q{\tilde{\r}})
\\&\quad+(\Lambda\D_q\tilde{\mathfrak{L}}|\Lambda\D_q\tilde{\r})-(\Lambda\D_q(\u\cdot\nabla(\tilde{E}^\top-\tilde{E}))|\Lambda\D_q(\tilde{E}^\top-\tilde{E}))
+(\Lambda\D_q\mathfrak{\tilde{Q}}|\Lambda\D_q(\tilde{E}^\top-\tilde{E}))\\&\quad-(\Lambda\D_q(\u\cdot\nabla\tilde{\mathcal{E}})|\Lambda\D_q{\tilde{\mathcal{E}}})
+(\Lambda\D_q\tilde{\mathfrak{K}}|\Lambda\D_q\tilde{\mathcal{E}}).
\end{split}
\end{equation*}
Notice that, for $q\ge q_0$, we have
\begin{equation}\label{31812}
\begin{split}
f_q^2\thickapprox
\|\Lambda\D_q\tilde{\r}\|_{L^2}^2+\|\Lambda\D_q\tilde{E}\|_{L^2}^2+\|\D_q\tilde{d}\|_{L^2}^2+\|\D_q\tilde{\O}\|_{L^2}^2;
\end{split}
\end{equation}
and
\begin{equation}\label{31813}
\begin{split}
&\beta_1\|\Lambda\D_q\tilde{\r}\|^2_{L^2}+2\beta_1\|\Lambda\D_q\tilde{\mathcal{E}}\|^2_{L^2}+\left(2\gamma\nu-\f{5\beta_1}{2}\right)\|\Lambda\D_q\tilde{d}\|_{L^2}^2
+\left(\mu\gamma-\f{\beta_2}{2}\right)\|\Lambda\D_q\tilde{\O}\|_{L^2}^2\\&\quad+\f{\beta_2}{2}\|\Lambda\D_q(\tilde{E}^\top-\tilde{E})\|^2_{L^2}
-2\gamma(\Lambda\D_q\tilde{\r}|\D_q\tilde{d})-2\gamma(\Lambda\D_q\tilde{\mathcal{E}}|\D_q\tilde{d})
-\gamma(\Lambda\D_q(\tilde{E}^\top-\tilde{E})|\D_q\tilde{\O})\\
&\thickapprox\|\Lambda\D_q\tilde{\r}\|^2_{L^2}+\|\D_q\tilde{d}\|_{L^2}^2+\|\D_q\tilde{\O}\|_{L^2}^2
+\|\Lambda\D_q(\tilde{E}^\top-\tilde{E})\|^2_{L^2}+\|\Lambda\D_q\tilde{\mathcal{E}}\|^2_{L^2}.
\end{split}
\end{equation}
Next, we apply Lemma \ref{cl} to obtain, using \eqref{31812}
\begin{equation}\label{319}
\begin{split}
|\mathcal{Y}|&\le
Cf_q\Big(\|\Lambda\D_q\tilde{\mathfrak{L}}\|_{L^2}+\|\Lambda\D_q\tilde{\mathfrak{Q}}\|_{L^2}+\|\Lambda\D_q\tilde{\mathfrak{K}}\|_{L^2}+\|\D_q\tilde{\mathfrak{J}}\|_{L^2}+\|\D_q\tilde{\mathfrak{M}}\|_{L^2}
+\|\D_q\tilde{\mathfrak{N}}\|_{L^2}\\&\qquad\qquad+\alpha_q
2^{-q(\f{N}{2}-1)}V'(\|\tilde{\r}\|_{\tilde{B}^{\f{N}{2}-1,\f{N}{2}}}+\|\tilde{d}\|_{B^{\f{N}{2}-1}}+\|\tilde{E}^\top-\tilde{E}\|_{\tilde{B}^{\f{N}{2}-1,\f{N}{2}}}
\\&\qquad\qquad+\|\tilde{\mathcal{E}}\|_{\tilde{B}^{\f{N}{2}-1,\f{N}{2}}}+\|\tilde{\O}\|_{B^{\f{N}{2}-1}})\Big).
\end{split}
\end{equation}
Therefore, there exists a universal positive constant $\kappa$,
which depends on $\mu$ and $\nu$, such that,
\begin{equation}\label{320}
\begin{split}
&\f{1}{2}\f{d}{dt}f_q^2+\kappa
\left(\|\Lambda\D_q\tilde{\r}\|_{L^2}^2+\|\Lambda\D_q\tilde{E}\|_{L^2}^2+\|\D_q\tilde{d}\|_{L^2}^2+\|\D_q\tilde{\O}\|_{L^2}^2\right)\\
&\quad\le
Cf_q\Big(\|\Lambda\D_q\tilde{\mathfrak{L}}\|_{L^2}+\|\Lambda\D_q\tilde{\mathfrak{Q}}\|_{L^2}+\|\Lambda\D_q\tilde{\mathfrak{K}}\|_{L^2}+\|\D_q\tilde{\mathfrak{J}}\|_{L^2}+\|\D_q\tilde{\mathfrak{M}}\|_{L^2}
+\|\D_q\tilde{\mathfrak{N}}\|_{L^2}\\&\qquad\qquad+\alpha_q
2^{-q(\f{N}{2}-1)}V'(\|\tilde{\r}\|_{\tilde{B}^{\f{N}{2}-1,\f{N}{2}}}+\|\tilde{d}\|_{B^{\f{N}{2}-1}}+\|\tilde{E}^\top-\tilde{E}\|_{\tilde{B}^{\f{N}{2}-1,\f{N}{2}}}
\\&\qquad\qquad+\|\tilde{\mathcal{E}}\|_{\tilde{B}^{\f{N}{2}-1,\f{N}{2}}}+\|\tilde{\O}\|_{B^{\f{N}{2}-1}})\Big)-KV'f_q^2.
\end{split}
\end{equation}

{\bf Third Step: Damping Effect.}\quad
We are now going to show that inequality \eqref{34} entails a
decay for $\r$, $E$, $d$ and $\O$. We postpone the proof of
smoothing properties for $d$ and $\O$ to the next step. Let
$\dl>0$ be a small parameter (which will tend to 0) and denote
$h_q^2=g_q^2+\dl^2$. From \eqref{34}, and dividing by $h_q$, we
obtain
\begin{equation*}
\begin{split}
&\f{d}{dt}h_q+\kappa h_q\\
&\le
C\alpha_q\Big(\|\tilde{\mathfrak{L}}\|_{\tilde{B}^{\f{N}{2}-1,\f{N}{2}}}+\|\tilde{\mathfrak{Q}}\|_{\tilde{B}^{\f{N}{2}-1,\f{N}{2}}}
+\|\tilde{\mathfrak{K}}\|_{\tilde{B}^{\f{N}{2}-1,\f{N}{2}}}+\|\tilde{\mathfrak{J}}\|_{B^{\f{N}{2}-1}}+\|\tilde{\mathfrak{M}}\|_{B^{\f{N}{2}-1}}
+\|\tilde{\mathfrak{N}}\|_{B^{\f{N}{2}-1}}\Big)\\
&\quad+C\alpha_q
V'\Big(\|\tilde{\r}\|_{\tilde{B}^{\f{N}{2}-1,\f{N}{2}}}+\|\tilde{d}\|_{B^{\f{N}{2}-1}}+\|\tilde{E}^\top-\tilde{E}\|_{\tilde{B}^{\f{N}{2}-1,\f{N}{2}}}
+\|\tilde{\mathcal{E}}\|_{\tilde{B}^{\f{N}{2}-1,\f{N}{2}}}
+\|\tilde{\O}\|_{B^{\f{N}{2}-1}}\Big)\\&\quad-KV'h_q+\dl KV'+\dl \kappa.
\end{split}
\end{equation*}
Integrating over $[0,t]$ and having $\dl$ tend to $0$, we obtain
\begin{equation}\label{320}
\begin{split}
&g_q(t)+\kappa\int_0^tg_q(\tau)d\tau\\
&\le
g_q(0)+C\int_0^t\alpha_q(s)\Big(\|\tilde{\mathfrak{L}}\|_{\tilde{B}^{\f{N}{2}-1,\f{N}{2}}}+\|\tilde{\mathfrak{K}}\|_{\tilde{B}^{\f{N}{2}-1,\f{N}{2}}}
+\|\tilde{\mathfrak{J}}\|_{B^{\f{N}{2}-1}}+\|\tilde{\mathfrak{M}}\|_{B^{\f{N}{2}-1}}
+\|\tilde{\mathfrak{N}}\|_{B^{\f{N}{2}-1}}\\&\qquad+\|\tilde{\mathfrak{Q}}\|_{\tilde{B}^{\f{N}{2}-1,\f{N}{2}}}\Big)ds
+\int_0^tV'(s)\Big\{C\alpha_q(s)\Big(\|\tilde{\r}\|_{\tilde{B}^{\f{N}{2}-1,\f{N}{2}}}+\|\tilde{d}\|_{B^{\f{N}{2}-1}}
+\|\tilde{E}^\top-\tilde{E}\|_{\tilde{B}^{\f{N}{2}-1,\f{N}{2}}}\\&\qquad+\|\tilde{\mathcal{E}}\|_{\tilde{B}^{\f{N}{2}-1,\f{N}{2}}}+\|\tilde{\O}\|_{B^{\f{N}{2}-1}}\Big)-Kg_q(s)\Big\}ds.
\end{split}
\end{equation}
Thanks to \eqref{33}, we have
\begin{equation*}
\begin{split}
&C\alpha_q(s)\Big(\|\tilde{\r}\|_{\tilde{B}^{\f{N}{2}-1,\f{N}{2}}}+\|\tilde{d}\|_{B^{\f{N}{2}-1}}
+\|\tilde{E}^\top-\tilde{E}\|_{\tilde{B}^{\f{N}{2}-1,\f{N}{2}}}+\|\tilde{\mathcal{E}}\|_{\tilde{B}^{\f{N}{2}-1,\f{N}{2}}}+\|\tilde{\O}\|_{B^{\f{N}{2}-1}}
\Big)-Kg_q(s)\\&\quad\le
C\alpha_q(s)\|\tilde{\r}(s)\|_{\tilde{B}^{\f{N}{2}-1,\f{N}{2}}}-KC^{-1}2^{\phi^{\f{N}{2}-1,\f{N}{2}}(q)}\|\D_q\tilde{\r}\|_{L^2}
\\&\qquad+C\alpha_q(s)\|\tilde{E}^\top(s)-\tilde{E}(s)\|_{\tilde{B}^{\f{N}{2}-1,\f{N}{2}}}-KC^{-1}2^{\phi^{\f{N}{2}-1,\f{N}{2}}(q)}\|\D_q(\tilde{E}^\top-\tilde{E})\|_{L^2}
\\&\qquad+C\alpha_q(s)\|\tilde{\mathcal{E}}(s)\|_{\tilde{B}^{\f{N}{2}-1,\f{N}{2}}}-KC^{-1}2^{\phi^{\f{N}{2}-1,\f{N}{2}}(q)}\|\D_q\tilde{\mathcal{E}}\|_{L^2}
\\&\qquad+C\alpha_q(s)\|\tilde{d}(s)\|_{B^{\f{N}{2}-1}}-KC^{-1}2^{q(\f{N}{2}-1)}\|\D_q\tilde{d}\|_{L^2}\\
&\qquad+C\alpha_q(s)\|\tilde{\O}(s)\|_{B^{\f{N}{2}-1}}-KC^{-1}2^{q(\f{N}{2}-1)}\|\D_q\tilde{\O}\|_{L^2}.
\end{split}
\end{equation*}
If we choose $K\ge C^2$, we have
\begin{equation*}
\begin{split}
&\sum_{q\in\mathbb{Z}}\Big\{C\alpha_q(s)\Big(\|\tilde{\r}\|_{\tilde{B}^{\f{N}{2}-1,\f{N}{2}}}+\|\tilde{d}\|_{B^{\f{N}{2}-1}}
+\|\tilde{E}^\top-\tilde{E}\|_{\tilde{B}^{\f{N}{2}-1,\f{N}{2}}}+\|\tilde{\mathcal{E}}\|_{\tilde{B}^{\f{N}{2}-1,\f{N}{2}}}\\
&\qquad\qquad\qquad+\|\tilde{\O}\|_{B^{\f{N}{2}-1}}\Big)-Kg_q(s)\Big\}\le 0.
\end{split}
\end{equation*}

According to the last inequality, and thanks to \eqref{33} and
\eqref{320}, we conclude, after summation on $\mathbb{Z}$, that
\begin{equation}\label{321}
\begin{split}
&\|\tilde{\r}(t)\|_{\tilde{B}^{\f{N}{2}-1,\f{N}{2}}}+\|\tilde{E}(t)\|_{\tilde{B}^{\f{N}{2}-1,\f{N}{2}}}+\|\tilde{\O}(t)\|_{B^{\f{N}{2}-1}}
+\|\tilde{d}(t)\|_{B^{\f{N}{2}-1}}\\
&\quad+\kappa\left(\int_0^t\left(\|\tilde{\r}(\tau)\|_{\tilde{B}^{\f{N}{2}+1,\f{N}{2}}}+\|\tilde{E}(\tau)\|_{\tilde{B}^{\f{N}{2}+1,\f{N}{2}}}+\|\tilde{d}(\tau)\|_{\tilde{B}^{\f{N}{2}+1,\f{N}{2}-1}}
+\|\tilde{\O}(\tau)\|_{\tilde{B}^{\f{N}{2}+1,\f{N}{2}-1}}\right)d\tau\right)
\\&\le C\Big\{\|\r_0\|_{\tilde{B}^{\f{N}{2}-1,\f{N}{2}}}+\|E_0\|_{\tilde{B}^{\f{N}{2}-1,\f{N}{2}}}+\|\O_0\|_{B^{\f{N}{2}-1}}
+\|d_0\|_{B^{\f{N}{2}-1}}+\int_0^t\Big(\|\tilde{\mathfrak{L}}(s)\|_{\tilde{B}^{\f{N}{2}-1,\f{N}{2}}}
\\&\qquad+\|\tilde{\mathfrak{M}}(s)\|_{B^{\f{N}{2}-1}}+\|\tilde{\mathfrak{J}}(s)\|_{B^{\f{N}{2}-1}}+\|\tilde{\mathfrak{K}}\|_{\tilde{B}^{\f{N}{2}-1,\f{N}{2}}}
+\|\tilde{\mathfrak{N}}(s)\|_{B^{\f{N}{2}-1}}+\|\tilde{\mathfrak{Q}}\|_{\tilde{B}^{\f{N}{2}-1,\f{N}{2}}}\Big)ds\Big\}.
\end{split}
\end{equation}

{\bf Fourth Step: Smoothing Effect.}\quad
Once showing the damping effect for $\r$ and $E$, we can further
get the smoothing effect on $d$ and $\O$ for the system \eqref{31}
by considering the second and third equations with terms
$\Lambda\r$ and $\Lambda E$ being seen as given source terms.
Indeed, thanks to \eqref{321}, it suffices to state the proof for
high frequencies only. We therefore assume in this step
$q\ge q_0$.
Define
$$I_q=2^{q(\f{N}{2}-1)}\|\D_q\tilde{d}\|_{L^2}+2^{q(\f{N}{2}-1)}\|\D_q\tilde{\O}\|_{L^2}.$$
Then, from the energy estimates for the system
\begin{equation*}
\begin{cases}
\partial_t \D_q\tilde{d}+\D_q(\u\cdot\nabla \tilde{d})-\nu\D \D_q\tilde{d}=2\Lambda\D_q \tilde{\r}
+\D_q\tilde{\mathfrak{M}}-KV'(t)\D_q\tilde{d},\\
\partial_t\D_q\tilde{\O}+\D_q(\u\cdot\nabla\tilde{\O})-\mu\D\D_q\tilde{\O}=\mathcal{R}\D_q\tilde{E}+\D_q\tilde{\mathfrak{N}}
-KV'(t)\D_q\tilde{\O},
\end{cases}
\end{equation*}
we have
\begin{equation*}
\begin{split}
&\f{1}{2}\f{d}{dt}I_q^2+\kappa 2^{2q}I_q^2\\
&\le
I_q\Big(2^{q\f{N}{2}}\|\D_q\tilde{\r}\|_{L^2}+2^{q(\f{N}{2}-1)}\|\D_q\tilde{\mathfrak{M}}\|_{L^2}+2^{q(\f{N}{2}-1)}\|\D_q\tilde{\mathfrak{N}}\|_{L^2}
+2^{q\f{N}{2}}\|\D_q \tilde{E}\|_{L^2}\Big)\\&\qquad
+I_qV'(t)\Big(C\alpha_q(\|\tilde{d}\|_{B^{\f{N}{2}-1}}+\|\tilde{\O}\|_{B^{\f{N}{2}-1}})-KI_q\Big),
\end{split}
\end{equation*}
for a universal positive constant $\kappa$. Using
$J_q^2=I_q^2+\dl^2$, integrating over $[0,t]$ and then taking the
limit as $\dl\rightarrow 0$, we deduce
\begin{equation}\label{322}
\begin{split}
I_q(t)+\kappa 2^{2q}\int_0^tI_q(s)ds &\le I_q(0)+\int_0^t2^{q(\f{N}{2}-1)}(\|\D_q\tilde{\mathfrak{M}}(s)\|_{L^2}+\|\D_q\tilde{\mathfrak{N}}(s)\|_{L^2})ds\\
&\qquad+\int_0^t2^{q\f{N}{2}}(\|\D_q\tilde{\r}(s)\|_{L^2}+\|\D_q\tilde{E}(s)\|_{L^2})ds\\&\qquad
+C\int_0^tV'(s)\alpha_q(s)(\|\tilde{d}(s)\|_{B^{\f{N}{2}-1}}+\|\tilde{\O}(s)\|_{B^{\f{N}{2}-1}})ds.
\end{split}
\end{equation}
We therefore get
\begin{equation*}
\begin{split}
&\sum_{q\ge
q_0}2^{q(\f{N}{2}-1)}(\|\D_q\tilde{d}(t)\|_{L^2}+\|\D_q\tilde{\O}(t)\|_{L^2})+\kappa\int_0^t\sum_{q\ge
q_0}2^{q(\f{N}{2}+1)}(\|\D_q\tilde{d}(s)\|_{L^2}+\|\D_q\tilde{\O}(t)\|_{L^2})ds\\
&\quad\le\|d_0\|_{B^{\f{N}{2}-1}}+\|\O_0\|_{B^{\f{N}{2}}-1}+\int_0^t(\|\tilde{\mathfrak{M}}(s)\|_{B^{\f{N}{2}-1}}
+\|\tilde{\mathfrak{N}}(s)\|_{B^{\f{N}{2}-1}})ds\\&\qquad+\int_0^t\sum_{q\ge
q_0}2^{q\f{N}{2}}(\|\D_q\tilde{\r}(s)\|_{L^2}+\|\D_q\tilde{E}(s)\|_{L^2})ds+CV(t)\sup_{s\in
[0,t]}(\|\tilde{d}\|_{B^{\f{N}{2}-1}}+\|\tilde{\O}\|_{B^{\f{N}{2}-1}}).
\end{split}
\end{equation*}
Using \eqref{321}, we eventually conclude that
\begin{equation*}
\begin{split}
&\kappa\int_0^t\sum_{q\ge
q_0}2^{q(\f{N}{2}+1)}(\|\D_q\tilde{d}(s)\|_{L^2}+\|\D_q\tilde{\O}(s)\|_{L^2})ds\\&\quad\le
C(1+V(t))\Big\{\|\r_0\|_{\tilde{B}^{\f{N}{2}-1,\f{N}{2}}}+\|E_0\|_{\tilde{B}^{\f{N}{2}-1,\f{N}{2}}}+\|\O_0\|_{B^{\f{N}{2}-1}}+\|d_0\|_{B^{\f{N}{2}-1}}\\&\qquad\qquad
+\int_0^t\Big(\|\tilde{\mathfrak{L}}(s)\|_{\tilde{B}^{\f{N}{2}-1,\f{N}{2}}}+\|\tilde{\mathfrak{M}}(s)\|_{B^{\f{N}{2}-1}}
+\|\tilde{\mathfrak{N}}(s)\|_{B^{\f{N}{2}-1}}+\|\tilde{\mathfrak{K}}(s)\|_{\tilde{B}^{\f{N}{2}-1,\f{N}{2}}}\\
&\quad\qquad\qquad+\|\tilde{\mathfrak{Q}}(s)\|_{\tilde{B}^{\f{N}{2}-1,\f{N}{2}}}
+\|\tilde{\mathfrak{J}}(s)\|_{B^{\f{N}{2}-1}}\Big)ds\Big\}.
\end{split}
\end{equation*}
Combining the last inequality with \eqref{321}, we finish the
proof of Proposition \ref{p1}.
\end{proof}

In the rest of this section, we will sketch the proof of the
existence of a unique global solution to \eqref{31}. To this end,
we only need to use some properties of transport and heat
equations in nonhomogeneous Sobolev spaces $H^s$ with high
regularity order, see \cite{CH, RD}. Indeed, we set $(\r^0, d^0,
\O^0, E^0)=(\r_0, d_0, \O_0, E_0)$ and
\begin{equation}\label{323}
\begin{cases}
\partial_t \r^{n+1}+\u\cdot\nabla \r^{n+1}=-\Lambda d^n+\mathfrak{L},\\
\partial_t d^{n+1}-\nu\D d^{n+1}=\mathfrak{M}-\u\cdot\nabla d^n+2\Lambda \r^n,\\
\partial_t\O^{n+1}-\mu\D\O^{n+1}=\mathfrak{N}+\u\cdot\nabla\O^n+\Lambda ((E^n)^\top-E^n),\\
\partial_t ((E^{n+1})^\top-E^{n+1})+\u\cdot\nabla ((E^{n+1})^\top-E^{n+1})=-\Lambda\O^n+\mathfrak{Q},\\
\partial_t\mathcal{E}^{n+1}+\u\cdot\nabla\mathcal{E}^{n+1}=-2\Lambda
d^n+\mathfrak{K},
\end{cases}
\end{equation}
where

 $$(\r^{n+1}, d^{n+1}, \O^{n+1}, E^{n+1})|_{t=0}=(\r_0, d_0,
\O_0, E_0)$$ Let $T>0$ and $s$ (large enough) be fixed, and let
$K$ be a suitably large positive constant (depending on $s$, $T$
and $\u$). We set
$$\tilde{\r}^n=e^{-Kt}\r^n,\quad\tilde{d}^n=e^{-Kt}d^n,\quad\tilde{\O}^n=e^{-Kt}\O^n,\quad\tilde{E}^n=e^{-Kt}E^n.$$
The straightforward computations show that $\{(\tilde{\r}^n,
\tilde{d}^n, \tilde{\O}^n,\tilde{E}^n)\}_{n\in\mathbb{N}}$ is a
Cauchy sequence in
$$C([0,T]; H^s)\times(C([0,T]; H^{s-1}\times H^{s-1})\cap
L^1([0,T]; H^{s+1}\times H^{s+1}))^{2N}\times C([0,T];
H^s)^{N\times N}.$$ Denoting by $(\tilde{\r}, \tilde{d},
\tilde{\O},\tilde{E})$ the limit,  then it is easy to show that
$(e^{Kt}\tilde{\r}, e^{Kt}\tilde{d},
e^{Kt}\tilde{\O},e^{Kt}\tilde{E})$ solves \eqref{31}.

\bigskip

\section{Global Existence}

The goal of this section is to prove the global existence of
solutions to \eqref{e1} by building approximating solutions
$(\r^n,\u^n, E^n)$ using an iterative method. Those approximate
solutions are solutions of auxiliary systems of \eqref{31} to
which Proposition \ref{p1} applies.

We set the first term $(\r^0,\u^0, E^0)$ to $(0,0,0)$. We then
define $\{(\r^n,\u^n, E^n)\}_{n\in\mathbb{N}}$ by induction. In
fact, we choose $(\r^{n+1},\u^{n+1}, E^{n+1})$ as the solution of
the following  system:
\begin{equation}\label{41}
\begin{cases}
\partial_t \r^{n+1}+\u^n\cdot\nabla \r^{n+1}+\Lambda d^{n+1}=\mathfrak{L}^n,\\
\partial_t d^{n+1}+\u^n\cdot\nabla d^{n+1}-\nu\D d^{n+1}-2\Lambda \r^{n+1}=\mathfrak{M}^n,\\
\partial_t\O^{n+1}+\u^n\cdot\nabla\O^{n+1}-\mu\D\O^{n+1}-\Lambda (E^{n+1}-(E^{n+1})^\top)=\mathfrak{N}^n,\\
\partial_t ((E^{n+1})^\top-E^{n+1})+\u\cdot\nabla ((E^{n+1})^\top-E^{n+1})+\Lambda\O^{n+1}=\mathfrak{Q}^n,\\
\partial_t\mathcal{E}^{n+1}+\u\cdot\nabla\mathcal{E}^{n+1}+2\Lambda
d^{n+1}=\mathfrak{K}^n,\\
\u^{n+1}=-\Lambda^{-1}\nabla d^{n+1}-\Lambda^{-1}\textrm{curl}\O^{n+1}, \\
(\r^{n+1},d^{n+1}, \O^{n+1}, E^{n+1})|_{t=0}=(\r_n,
\Lambda^{-1}\Dv \u_n, \Lambda^{-1}\mathrm{curl}\u_n, E_n),
\end{cases}
\end{equation}
where
$$\r_n=\sum_{|q|\le n}\D_q\r_0,\quad \u_n=\sum_{|q|\le
n}\D_q\u_0,\quad E_n=\sum_{|q|\le n}\D_qE_0, $$
$$\mathfrak{L}^n=-\r^n\Dv\u^n,$$
\begin{equation*}
\begin{split}
\mathfrak{M}^n=\u^n\cdot\nabla
d^n-\Lambda^{-1}\Dv\Big(&\u^n\cdot\nabla\u^n+K(\r^n)\nabla
\r^n+\f{\r^n}{1+\r^n}\mathcal{A}\u^n\\
&-E^n_{jk}\partial_{x_j}E_{ik}^n+\Dv(\r^n
E^n)\Big),
\end{split}
\end{equation*}
$$\mathfrak{N}^n=\u^n\cdot\nabla\O^n-\Lambda^{-1}\mathrm{curl}\left(\u^n\cdot\nabla\u^n+K(\r^n)\nabla
\r^n+\f{\r^n}{1+\r^n}\mathcal{A}\u^n-E^n_{jk}\partial_{x_j}E_{ik}^n\right),$$
$$\mathfrak{Q}^n=(\nabla\u^n E^n)^\top-\nabla\u^n E^n,$$
and
\begin{equation*}
\begin{split}
\mathfrak{K}^n&=\u^n\cdot\nabla\mathcal{E}^{n}-\Lambda^{-1}\partial_{x_i}\Lambda^{-1}\partial_{x_j}(\u\cdot\nabla(E^n_{ij}+E^n_{ji}))\\&\quad+\Lambda^{-1}\partial_{x_i}\Lambda^{-1}\partial_{x_j}((\nabla\u^n
E^n)_{ij}+(\nabla\u^n E^n)_{ji}).
\end{split}
\end{equation*}

As in \eqref{6402}, due to the divergence property of the
deformation gradient, we can rewritten the second equation in
\eqref{41} as
\begin{equation}\label{x41}
\partial_t d^{n+1}+\u\cdot\nabla d^{n+1}-\nu\D d^{n+1}-2\Lambda\mathcal{E}^{n+1}=\mathfrak{J}^n,
\end{equation}
where $\mathfrak{J}^n$ is given by
\begin{equation*}
\begin{split}
\mathfrak{J}^n&=\u^n\cdot\nabla
d^n-\Lambda^{-1}\Dv\Big(\u^n\cdot\nabla\u^n+K(\r^n)\nabla
\r^n+\f{\r^n}{1+\r^n}\mathcal{A}\u^n-E^n_{jk}\partial_{x_j}E_{ik}^n-\Dv(\r^n
E^n)\Big).
\end{split}
\end{equation*}

The argument in the previous section guarantees that the system
\eqref{41} is solvable and Remark \ref{X1} tells us that, in view
of \eqref{x41}, the solution to \eqref{41} satisfies the estimates
in Proposition \ref{p1}.

\subsection{Uniform Estimates in the Critical Regularity Case}
In this subsection, we establish uniform estimates in
$\mathfrak{B}^{\f{N}{2}}$ for $(\r^n, \u^n, E^n)$. Denote
$$\gamma=\|\r_0\|_{\tilde{B}^{\f{N}{2}-1,\f{N}{2}}}+\|\u_0\|_{B^{\f{N}{2}-1}}+\|E_0\|_{\tilde{B}^{\f{N}{2}-1,\f{N}{2}}}.$$
We are going to show that there exists a positive constant
$\Gamma$ such that, if $\gamma$ is small enough, the following
estimate holds for all $n\in\mathbb{N}$:
\begin{equation}
\|(\r^n, \u^n, E^n)\|_{\mathfrak{B}^{\f{N}{2}}}\le \Gamma \gamma.
\tag{$\mathfrak{P}_n$}
\end{equation}
We will prove ($\mathfrak{P}_n$) by the mathematical induction.
Suppose that ($\mathfrak{P}_n$) is satisfied and let us prove that
($\mathfrak{P}_{n+1}$) also holds.

According to Proposition \ref{p1} and the definition of
$(\r_n, \u_n, E_n)$, the following inequality holds
\begin{equation}\label{40}
\begin{split}
\|(\r^{n+1}, \u^{n+1}, E^{n+1})\|_{\mathfrak{B}^{\f{N}{2}}}&\le
Ce^{CV^n}\Big(\|\r_0\|_{\tilde{B}^{\f{N}{2}-1,\f{N}{2}}}+\|\u_0\|_{B^{\f{N}{2}-1}}+\|E_0\|_{\tilde{B}^{\f{N}{2}-1,\f{N}{2}}}\\
&\quad+\|\mathfrak{L}^n\|_{L^1(\tilde{B}^{\f{N}{2}-1,\f{N}{2}})}+\|\mathfrak{M}^n\|_{L^1(B^{\f{N}{2}-1})}+\|\mathfrak{N}^n\|_{L^1(B^{\f{N}{2}-1})}
\\&\qquad+\|\mathfrak{Q}^n\|_{L^1(\tilde{B}^{\f{N}{2}-1,\f{N}{2}})}+\|\mathfrak{K}^n\|_{L^1(\tilde{B}^{\f{N}{2}-1,\f{N}{2}})}
+\|\mathfrak{J}^n\|_{L^1(B^{\f{N}{2}-1})}\Big)
\end{split}
\end{equation}
where
$$V^n=\int_0^\infty\|\u^n(s)\|_{B^{\f{N}{2}+1}}ds.$$
Hence, it remains to obtain the estimates for $\mathfrak{L}^n$,
$\mathfrak{M}^n$, $\mathfrak{N}^n$, $\mathfrak{K}^n$,
$\mathfrak{J}^n$ and $\mathfrak{Q}^n$ by using ($\mathfrak{P}_n$).

{\bf Estimate of $\mathfrak{L}^n$:} The estimate of
$\mathfrak{L}^n$ is straightforward; thanks to Proposition
\ref{p2}, we have
\begin{equation}\label{44}
\begin{split}
\|\mathfrak{L}^n\|_{L^1(\tilde{B}^{\f{N}{2}-1,\f{N}{2}})}&\le
C\|\r^n\|_{L^\infty(\tilde{B}^{\f{N}{2}-1,\f{N}{2}})}\|\Dv\u^n\|_{L^1(B^{\f{N}{2}})}\\
&\le C\Gamma^2\gamma^2.
\end{split}
\end{equation}

{\bf Estimates of $\mathfrak{M}^n$, $\mathfrak{N}^n$, and
$\mathfrak{J}^n$:}
To estimate $\mathfrak{M}^n$, $\mathfrak{N}^n$, and
$\mathfrak{J}^n$, we assume that $\gamma$ satisfies
$$\gamma\le \f{1}{2\Gamma\mathfrak{C}},$$
where $\mathfrak{C}$ is the continuity modulus of
$B^{\f{N}{2}-1,\f{N}{2}}\hookrightarrow L^\infty$. If
($\mathfrak{P}_n$) holds, then
\begin{equation*}
\|\r^n\|_{L^\infty(\R^+\times\R^N)}\le\mathfrak{C}\|\r^n\|_{B^{\f{N}{2}-1,\f{N}{2}}}\le
\f{1}{2},
\end{equation*}
and
$$\|E^n\|_{L^\infty(\R^+\times\R^N)}\le\mathfrak{C}\|E^n\|_{B^{\f{N}{2}-1,\f{N}{2}}}\le
\f{1}{2}.$$
 We will estimate $\mathfrak{M}^n$ term by term.
First we have, according to Proposition \ref{p2} and
Lemma \ref{l11},
\begin{equation}\label{43}
\begin{split}
\left\|\f{\r^n}{1+\r^n}\nabla^2\u^n\right\|_{L^1(B^{\f{N}{2}-1})}&\le
C\|\nabla^2\u^n\|_{L^1(B^{\f{N}{2}-1})}\left\|\f{\r^n}{1+\r^n}\right\|_{L^\infty(B^{\f{N}{2}})}\\
&\le
C\|\u^n\|_{L^1(B^{\f{N}{2}+1})}\|\r^n\|_{L^\infty(B^{\f{N}{2}})}
\le C\Gamma^2\gamma^2
\end{split}
\end{equation}
Since $K(0)=0$ and Lemma \ref{l11}, one has
\begin{equation*}
\begin{split}
\|K(\r^n)\nabla\r^n\|_{L^1(B^{\f{N}{2}-1})}\le
C\|K(\r^n)\|_{L^2(B^{\f{N}{2}})}\|\nabla\r^n\|_{L^2(B^{\f{N}{2}-1})}
\le C\|\r^n\|_{L^2(B^{\f{N}{2}})}^2.
\end{split}
\end{equation*}
On the other hand, we have, by H\"{o}lder's inequality,
\begin{equation*}
\begin{split}
\|\r^n\|^2_{L^2(B^{\f{N}{2}})}&=\int_0^\infty\left(\sum_{q\in\mathbb{Z}}\left(2^{q\phi^{\f{N}{2}-1,\f{N}{2}}(q)}\|\D_q\r^n(t)\|_{L^2}\right)^{\f12}
\left(2^{q\phi^{\f{N}{2}+1,\f{N}{2}}(q)}\|\D_q\r^n(t)\|_{L^2}\right)^{\f12}\right)^2dt\\
&\le\int_0^\infty\|\r^n(t)\|_{\tilde{B}^{\f{N}{2}-1,\f{N}{2}}}\|\r^n(t)\|_{\tilde{B}^{\f{N}{2}+1,\f{N}{2}}}dt\\
&\le\|\r^n\|_{L^\infty(\tilde{B}^{\f{N}{2}-1,\f{N}{2}})}\|\r^n\|_{L^1(\tilde{B}^{\f{N}{2}+1,\f{N}{2}})}
\le C\Gamma^2\gamma^2.
\end{split}
\end{equation*}
Thus, the above two inequalities imply that
\begin{equation}\label{46}
\begin{split}
\|K(\r^n)\nabla\r^n\|_{L^1(B^{\f{N}{2}-1})}\le C\Gamma^2\gamma^2.
\end{split}
\end{equation}
From Proposition \ref{p2}, we obtain the following estimates:
\begin{equation}\label{47}
\begin{split}
&\|\u^n\cdot\nabla
d^n\|_{L^1(B^{\f{N}{2}-1})}+\|\u^n\cdot\nabla\u^n\|_{L^1(B^{\f{N}{2}-1})}+\|\u^n\cdot\nabla\O^n\|_{L^1(B^{\f{N}{2}-1})}\\
&\quad\le
C\int_0^\infty\|\u^n(t)\|_{B^{\f{N}{2}-1}}\left(\|d^n(t)\|_{B^{\f{N}{2}+1}}+\|\O^n(t)\|_{B^{\f{N}{2}+1}}\right)dt\\
&\quad\le
C\|\u^n\|_{L^\infty(B^{\f{N}{2}-1})}\left(\|d^n\|_{L^1(B^{\f{N}{2}+1})}+\|\O^n\|_{L^1(B^{\f{N}{2}+1})}\right)\\
&\quad\le C\Gamma^2\gamma^2;
\end{split}
\end{equation}
\begin{equation}\label{48}
\begin{split}
\|E^n_{jk}\partial_{x_j}E_{ik}^n\|_{L^1(B^{\f{N}{2}-1})}&\le
C\int_0^\infty\|E^n(t)\|_{\tilde{B}^{\f{N}{2}-1,\f{N}{2}}}\|E^n(t)\|_{\tilde{B}^{\f{N}{2}+1,\f{N}{2}}}dt\\
&\le C\|E^n\|_{L^\infty(\tilde{B}^{\f{N}{2}-1,\f{N}{2}})}\|E^n\|_{L^1(\tilde{B}^{\f{N}{2}+1,\f{N}{2}})}\\
&\le C\Gamma^2\gamma^2;
\end{split}
\end{equation}
and
\begin{equation}\label{49}
\begin{split}
\|\Dv(\r^n E^n)\|_{L^1(B^{\f{N}{2}-1})}&\le
C\Big(\int_0^\infty\|\r^n(t)\|_{\tilde{B}^{\f{N}{2}-1,\f{N}{2}}}\|E^n(t)\|_{\tilde{B}^{\f{N}{2}+1,\f{N}{2}}}dt\\&\quad+
\int_0^\infty\|E^n(t)\|_{\tilde{B}^{\f{N}{2}-1,\f{N}{2}}}\|\r^n(t)\|_{\tilde{B}^{\f{N}{2}+1,\f{N}{2}}}dt\Big)\\
&\le C\Big(\|\r^n\|_{L^\infty(\tilde{B}^{\f{N}{2}-1,\f{N}{2}})}\|E^n\|_{L^1(\tilde{B}^{\f{N}{2}+1,\f{N}{2}})}\\
&\quad+\|E^n\|_{L^\infty(\tilde{B}^{\f{N}{2}-1,\f{N}{2}})}\|\r^n\|_{L^1(\tilde{B}^{\f{N}{2}+1,\f{N}{2}})}\Big)\\
&\le C\Gamma^2\gamma^2.
\end{split}
\end{equation}
Summarizing \eqref{43}-\eqref{49}, we finally get
\begin{equation}\label{410}
\begin{split}
\|\mathfrak{M}^n\|_{L^1(B^{\f{N}{2}-1})}+\|\mathfrak{N}^n\|_{L^1(B^{\f{N}{2}-1})}+\|\mathfrak{J}^n\|_{L^1(B^{\f{N}{2}-1})}\le
C\Gamma^2\gamma^2.
\end{split}
\end{equation}

{\bf Estimates of $\mathfrak{Q}^n$ and $\mathfrak{K}^n$:}  It is
easy to obtain, using Proposition \ref{p2},
\begin{equation}\label{411}
\begin{split}
\|\nabla\u^n E^n\|_{L^1(\tilde{B}^{\f{N}{2}-1,\f{N}{2}})}&\le
C\int_0^\infty\|\u^n(t)\|_{B^{\f{N}{2}+1}}\|E^n(t)\|_{\tilde{B}^{\f{N}{2}-1,\f{N}{2}}}dt\\
&\le
C\|\u^n\|_{L^1(B^{\f{N}{2}+1})}\|E^n\|_{L^\infty(\tilde{B}^{\f{N}{2}-1,\f{N}{2}})}\\
&\le C\Gamma^2\gamma^2.
\end{split}
\end{equation}
Hence
$$\|\mathfrak{Q}^n\|_{\tilde{B}^{\f{N}{2}-1,\f{N}{2}}}+\|\mathfrak{K}^n\|_{\tilde{B}^{\f{N}{2}-1,\f{N}{2}}}\le
C\Gamma\gamma.$$

From \eqref{44}, \eqref{410} and \eqref{411}, we finally have
\begin{equation}\label{412}
\begin{split}
\|(\r^{n+1}, \u^{n+1}, E^{n+1})\|_{\mathfrak{B}^{\f{N}{2}}}\le
Ce^{C\Gamma\gamma}(\Gamma^2\gamma^2+\gamma).
\end{split}
\end{equation}
Now we choose $\Gamma=4C$ and choose $\gamma$ such that
\begin{equation}
\Gamma^2\gamma\le 1,\quad e^{C\Gamma\gamma}\le
2\quad\textrm{and}\quad \Gamma\gamma\le \f{1}{2\mathfrak{C}},
\tag{$\mathfrak{H}$}
\end{equation}
then ($\mathfrak{P}_n$) holds for all $n\in \mathbb{N}$.

\medskip\medskip

\subsection{Existence of a solution}
In this subsection, we show that, up to a subsequence, the
sequence $(\r^n, \u^n, E^n)_{n\in\mathbb{N}}$ converges in
$\mathcal{D}'(\R^+\times\R^N)$ to a solution $(\r,\u, E)$ of
\eqref{e1}. We will use some compactness arguments. The starting
point is to show that the  first-order time derivative of $(\r^n,
\u^n, E^n)$ is uniformly bounded in appropriate spaces. This
enables us to apply the Ascoli-Arzela theorem and get the
existence of a limit $(\r,\u,E)$ for a subsequence. Then, the
uniform bounds of the previous subsection provide us with some
additional regularity and convergence properties so that we may
pass to the limit in the system \eqref{41}.

To begin with, we have to prove uniform bounds in the suitable
functional spaces and the convergence of $\{(\r^n, \u^n,
E^n)\}_{n\in\mathbb{N}}$. The uniform bounds is summarized in the
following lemma.

\begin{Lemma}\label{ub}
$\{(\r^n, \u^n, E^n)\}_{n\in\mathbb{N}}$ is uniformly bounded in
$$C^{\f{1}{2}}_{loc}\left(\R^+;
B^{\f{N}{2}-1}\right)\times\left(C^{\f{1}{4}}_{loc}\left(\R^+;B^{\f{N}{2}-\f{3}{2}}\right)\right)^N\times\left(C^{\f{1}{2}}_{loc}\left(\R^+;
B^{\f{N}{2}-1}\right)\right)^{N\times N}$$ (and also in
$C^{\f{1}{2}}_{loc}\left(B^{\f{N}{2}-1}\times\left(B^{\f{N}{2}-2}\right)^N\times
\left(B^{\f{N}{2}-1}\right)^{N\times N}\right)$ if $N\ge 3$).
\end{Lemma}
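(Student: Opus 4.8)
The plan is to deduce the time regularity of $(\r^n,\u^n,E^n)$ directly from the system \eqref{41} — rewriting the velocity through $\u^{n}=-\Lambda^{-1}\nabla d^{n}-\Lambda^{-1}\mathrm{curl}\,\O^{n}$ — together with the uniform bound ($\mathfrak{P}_n$) and the elementary observation that if $\partial_t f\in L^p_{loc}(\R^+;X)$ with $p\in(1,\infty]$ then
$$\|f(t)-f(s)\|_X\le |t-s|^{1/p'}\,\|\partial_t f\|_{L^p([s,t];X)},\qquad \f1p+\f1{p'}=1,$$
so that $f\in C^{1/p'}_{loc}(\R^+;X)$. Thus everything reduces to showing, uniformly in $n$ and on every bounded interval $[0,T]$, that $\partial_t\r^n$ and $\partial_t E^n$ lie in $L^2_{loc}(B^{\f{N}{2}-1})$, and that $\partial_t\u^n$ lies in $L^{4/3}_{loc}(B^{\f{N}{2}-\f32})$ (and in $L^{2}_{loc}(B^{\f{N}{2}-2})$ when $N\ge 3$), with norms controlled by a constant depending only on $\Gamma\gamma$ and $T$.

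First I would record, from ($\mathfrak{P}_n$) alone, the integrability needed to treat the right-hand sides. Since for low frequencies a higher Besov index is weaker, $\tilde B^{\f{N}{2}-1,\f{N}{2}}\hookrightarrow B^{\f{N}{2}}$, and more generally $\tilde B^{s,t}\hookrightarrow B^{\sigma}$ for $s\le\sigma\le t$; hence $\r^n,E^n\in L^\infty_{loc}(B^{\f{N}{2}})$ and $\nabla\r^n,\nabla E^n\in L^\infty_{loc}(\tilde B^{\f{N}{2}-2,\f{N}{2}-1})\hookrightarrow L^\infty_{loc}(B^{\f{N}{2}-1})$. Interpolating the $L^\infty_T$ and $L^1_T$ endpoints of $\mathfrak{B}^{\f{N}{2}}$ by the Cauchy--Schwarz argument already used in \eqref{49} (i.e. $\|f\|_{L^2_T(B^{\f{N}{2}})}\le\|f\|^{1/2}_{L^\infty_T(\tilde B^{\f{N}{2}-1,\f{N}{2}})}\|f\|^{1/2}_{L^1_T(\tilde B^{\f{N}{2}+1,\f{N}{2}})}$, and its analogue for $\u$) yields $\r^n,\u^n,E^n\in L^2_{loc}(B^{\f{N}{2}})$, so that $\nabla\u^n,\Dv\u^n,\Lambda d^n,\Lambda\O^n\in L^2_{loc}(B^{\f{N}{2}-1})$, and also $\nabla\u^n\in L^{4/3}_{loc}(B^{\f{N}{2}-\f12})$ and $\mathcal{A}\u^n,\D d^n,\D\O^n\in L^{4/3}_{loc}(B^{\f{N}{2}-\f32})\cap L^2_{loc}(B^{\f{N}{2}-2})$ by interpolating their $L^\infty_T(B^{\f{N}{2}-3})$ and $L^1_T(B^{\f{N}{2}-1})$ bounds; moreover $\|\r^n\|_{L^\infty_{t,x}},\|E^n\|_{L^\infty_{t,x}}\le\f12$ by the uniform bounds of the previous subsection. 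With these facts, Proposition \ref{p2}, Lemma \ref{l11}, Lemma \ref{l222}, the algebra property of $B^{\f{N}{2}}$, and the observation that $\Lambda^{-1}\nabla,\Lambda^{-1}\mathrm{curl}$ are of order $0$ while $\Lambda^{-1}\Dv\Dv,\mathcal{A}$ are of order $1$ and $2$, one estimates each right-hand side of \eqref{41}.

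For the mass and elasticity equations, $\partial_t\r^{n+1}=-\u^n\cdot\nabla\r^{n+1}-\Lambda d^{n+1}-\r^n\Dv\u^n$, and similarly for $\mathcal{E}^{n+1}$ and $(E^{n+1})^\top-E^{n+1}$ (using $\mathfrak{Q}^n,\mathfrak{K}^n$); each term is either linear ($\Lambda d^{n+1},\Lambda\O^{n+1},\nabla\u^{n+1}\in L^2_{loc}(B^{\f{N}{2}-1})$), a transport term $\u^n\cdot\nabla(\cdot)$ treated by pairing $\u^n\in L^2_{loc}(B^{\f{N}{2}})$ with $\nabla(\cdot)\in L^\infty_{loc}(B^{\f{N}{2}-1})$ through $B^{\f{N}{2}}\times B^{\f{N}{2}-1}\to B^{\f{N}{2}-1}$, or a quadratic term ($\r^n\Dv\u^n$, $\nabla\u^n E^n$, and the constituents of $\mathfrak{K}^n$ after peeling off the harmless order-$0$ operators) treated by pairing a factor in $L^\infty_{loc}(B^{\f{N}{2}})$ with one in $L^2_{loc}(B^{\f{N}{2}-1})$; hence $\partial_t\r^n,\partial_t\mathcal{E}^n,\partial_t(E^n-(E^n)^\top)\in L^2_{loc}(B^{\f{N}{2}-1})$ uniformly, and by \eqref{EE} so does $\partial_t E^n$, giving $\r^n,E^n\in C^{1/2}_{loc}(B^{\f{N}{2}-1})$. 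For the velocity, since $\partial_t\u^{n+1}=-\Lambda^{-1}\nabla\partial_t d^{n+1}-\Lambda^{-1}\mathrm{curl}\,\partial_t\O^{n+1}$, it suffices to control $\partial_t d^{n+1}=\nu\D d^{n+1}+2\Lambda\r^{n+1}-\u^n\cdot\nabla d^{n+1}+\mathfrak{M}^n$ and its $\O$-analogue: the parabolic terms $\D d^{n+1},\D\O^{n+1}$ only belong to $L^1_{loc}(B^{\f{N}{2}-1})$, which forces the descent to $B^{\f{N}{2}-\f32}$ where they lie in $L^{4/3}_{loc}$; $\Lambda\r^{n+1},\Lambda E^{n+1}\in L^\infty_{loc}(\tilde B^{\f{N}{2}-2,\f{N}{2}-1})\hookrightarrow L^\infty_{loc}(B^{\f{N}{2}-\f32})$; the transport term is handled by pairing $\u^n\in L^\infty_{loc}(B^{\f{N}{2}-1})$ with $\nabla d^{n+1}\in L^{4/3}_{loc}(B^{\f{N}{2}-\f12})$ via $B^{\f{N}{2}-1}\times B^{\f{N}{2}-\f12}\to B^{\f{N}{2}-\f32}$; and the nonlinear constituents of $\mathfrak{M}^n$ ($\u^n\cdot\nabla\u^n$, $K(\r^n)\nabla\r^n$, $\f{\r^n}{1+\r^n}\mathcal{A}\u^n$, $E^n\nabla E^n$, $\Dv(\r^n E^n)$) are placed in $L^{4/3}_{loc}(B^{\f{N}{2}-\f32})$ similarly, using Lemma \ref{l11}. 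This gives $\partial_t\u^n\in L^{4/3}_{loc}(B^{\f{N}{2}-\f32})$ uniformly, hence $\u^n\in C^{1/4}_{loc}(B^{\f{N}{2}-\f32})$; and when $N\ge3$ the product laws $B^{s_1}\times B^{s_2}\to B^{s_1+s_2-\f{N}{2}}$ have enough slack (with $s_1,s_2$ near $\f{N}{2}-1$ one still has $s_1+s_2>0$) to keep $L^2$-in-time integrability of all these terms at the level of $B^{\f{N}{2}-2}$, upgrading the velocity to $C^{1/2}_{loc}(B^{\f{N}{2}-2})$ and giving the parenthetical statement.

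The delicate point throughout is precisely the transport terms $\u^n\cdot\nabla(\cdot)$ and the parabolic terms $\D d^n,\D\O^n$: each is a product of two factors that individually belong only to $L^\infty_T$ or $L^1_T$ of a critical Besov space, so extracting $L^2_{loc}$ (or $L^{4/3}_{loc}$) time integrability of the product at the correct spatial regularity requires the interpolation/Cauchy--Schwarz bookkeeping described above, and in dimension $N=2$ the borderline character of the product laws is exactly what pins down the Hölder exponents $\f12,\f14$ and the spaces $B^{\f{N}{2}-1},B^{\f{N}{2}-\f32}$ that appear. Since every bound so obtained is majorized by a constant depending only on $\Gamma\gamma$ and $T$, all the estimates are uniform in $n$, and the stated Hölder bounds follow from the displayed inequality of the first paragraph.
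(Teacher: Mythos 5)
Your proposal is correct and follows essentially the same route as the paper: bound $\partial_t\r^n,\partial_t E^n$ uniformly in $L^2_{loc}(B^{\f{N}{2}-1})$ and $\partial_t d^n,\partial_t\O^n$ in $L^{4/3}_{loc}(B^{\f{N}{2}-\f{3}{2}})$ (resp.\ $L^2_{loc}(B^{\f{N}{2}-2})$ for $N\ge 3$) using the iterative system, the uniform bound ($\mathfrak{P}_n$), interpolation between the $L^\infty_T$ and $L^1_T$ endpoints, and the product laws, then integrate in time to get the H\"older exponents. The only (harmless) cosmetic differences are that you absorb the paper's $L^4$-in-time pieces into $L^{4/3}_{loc}$ on bounded intervals and place $\Lambda\r^n$ via the hybrid-space embedding rather than $L^4$-interpolation.
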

\begin{proof}We will finish the proof via five steps.

{\bf Step 1: Uniform bound of $\partial_t\r^n$ in
$L^2(B^{\f{N}{2}-1})$.} In fact, notice that
$$\partial_t\r^{n+1}=-\r^n\Dv\u^n-\u^n\cdot\nabla \r^{n+1}-\Lambda
d^n.$$ According to the estimates in the previous subsection and
the interpolation result in Proposition \ref{p3}, we see that
$\{\u^n\}_{n\in\mathbb{N}}$ is uniformly bounded in
$L^2(B^{\f{N}{2}})$, and $\{(\r^n, E^n)\}_{n\in\mathbb{N}}$ is
uniformly bounded in $(L^\infty(B^{\f{N}{2}}))^{N^2+1}$. Thus,
$-\r^n\Dv\u^n-\u^n\cdot\nabla \r^{n+1}-\Lambda d^n$ is uniformly
bounded in $L^2(B^{\f{N}{2}-1})$,  which implies that
$\partial_t\r^n$ is uniformly bounded in $L^2(B^{\f{N}{2}-1})$, and furthermore
$\{\r^n\}_{n\in\mathbb{N}}$ is uniformly bounded in
$C^{\f{1}{2}}_{loc}\left(\R^+; B^{\f{N}{2}-1}\right)$.

{\bf Step 2: Uniform bound of $\partial_tE^n$ in
$L^2(B^{\f{N}{2}-1})$.} In fact, notice that
$$\partial_t((E^{n+1})^\top-E^{n+1})=-\Lambda\O^n+\nabla\u^n E^n-\u^n\cdot\nabla ((E^{n+1})^\top-E^{n+1}).$$ According to the estimates in the previous subsection and
the interpolation result in Proposition \ref{p3}, we conclude that
$\{\u^n\}_{n\in\mathbb{N}}$ is uniformly bounded in
$L^2(B^{\f{N}{2}})$, and $\{(\r^n, E^n)\}_{n\in\mathbb{N}}$ is
uniformly bounded in $(L^\infty(B^{\f{N}{2}}))^{N^2+1}$. Thus,
$-\Lambda\O^n+\nabla\u^n E^n-\u^n\cdot\nabla
((E^{n+1})^\top-E^{n+1})$ is uniformly bounded in
$L^2(B^{\f{N}{2}-1})$. Therefore, $\partial_tE^n$ is
uniformly bounded in $L^2(B^{\f{N}{2}-1})$, and furthermore,
$\{(E^n)^\top-E^n\}_{n\in\mathbb{N}}$ is uniformly bounded in
$\left(C^{\f{1}{2}}_{loc}\left(\R^+;
B^{\f{N}{2}-1}\right)\right)^{N\times N}$.
Similarly, we can show that $\{\mathcal{E}^n\}_{n\in\mathbb{N}}$
is uniformly bounded in $C^{\f{1}{2}}_{loc}\left(\R^+;
B^{\f{N}{2}-1}\right)$. Hence, $\{E^n\}_{n\in\mathbb{N}}$ is
uniformly bounded in $C^{\f{1}{2}}_{loc}\left(\R^+;
B^{\f{N}{2}-1}\right)$.

{\bf Step 3: Uniform bound of $\partial_td^n$ in
$L^{\f{4}{3}}(B^{\f{N}{2}-\f{3}{2}})+L^4(B^{\f{N}{2}-\f{3}{2}})$.}
To this end, we recall that
\begin{equation*}
\begin{split}
\partial_t d^{n+1}&=-\u^n\cdot\nabla d^{n+1}+\nu\D d^{n+1}+2\Lambda
\r^{n+1}+\u^n\cdot\nabla d^n\\&\quad
-\Lambda^{-1}\Dv\Big(\u^n\cdot\nabla\u^n+K(\r^n)\nabla
\r^n+\f{\r^n}{1+\r^n}\mathcal{A}\u^n-E^n_{jk}\partial_{x_j}E_{ik}^n+\Dv(\r^n
E^n)\Big).
\end{split}
\end{equation*}
The estimates in the previous subsection and the interpolation
result in Proposition \ref{p3} yield that
$\{\u^n\}_{n\in\mathbb{N}}$ is uniformly bounded in
$L^\infty(B^{\f{N}{2}-1})\cap L^{\f{4}{3}}(B^{\f{N+1}{2}})$. This
uniform bound, combining together with the uniform bound of
$\{\r^n\}_{n\in\mathbb{N}}$ in $L^\infty(B^{\f{N}{2}})$, gives an
uniform bound of
$$-\u^n\cdot\nabla d^{n+1}+\nu\D d^{n+1}+\u^n\cdot\nabla
d^n-\Lambda^{-1}\Dv\left(\u^n\cdot\nabla\u^n+\f{\r^n}{1+\r^n}\mathcal{A}\u^n\right)$$
in $L^{\f{4}{3}}(B^{\f{N-3}{2}})$. Using the
uniform bound of $\{\r^n\}_{n\in\mathbb{N}}$ in
$L^\infty(B^{\f{N}{2}})\cap L^2(B^{\f{N}{2}})$ obtained
from the uniform bounds of $\{\r^n\}_{n\in\mathbb{N}}$ in
$L^\infty(\tilde{B}^{\f{N}{2}-1,\f{N}{2}})\cap
L^1(\tilde{B}^{\f{N}{2}+1,\f{N}{2}})$ and the interpolation result
in Proposition \ref{p3}, we deduce that
$\{\r^n\}_{n\in\mathbb{N}}$ is uniformly bounded in
$L^4(B^{\f{N-1}{2}})$, and hence
$\{\Lambda\r^n\}_{n\in\mathbb{N}}$ is uniformly bounded in
$L^4(B^{\f{N-3}{2}})$. Finally, both
$E^n_{jk}\partial_{x_j}E_{ik}^n$ and $\Dv(\r^n E^n)$ are also
uniformly bounded in $L^4(B^{\f{N-3}{2}})$, since
$\{\r^n\}_{n\in\mathbb{N}}$ and $\{E^n\}_{n\in\mathbb{N}}$ are
uniformly bounded in $L^\infty(B^{\f{N}{2}})\cap
L^4(B^{\f{N-1}{2}})$ and $\Dv(\r E)$ can be rewritten as the sum
of $\r\Dv E$ and $\nabla\r E$. Therefore, $\{\partial_t
d^n\}_{n\in\mathbb{N}}$ is uniformly bounded in
$L^{\f{4}{3}}(B^{\f{N}{2}-\f{3}{2}})+L^4(B^{\f{N}{2}-\f{3}{2}})$.

{\bf Step 4: Uniform bound of $\partial_t\O^n$ in
$L^{\f{4}{3}}(B^{\f{N}{2}-\f{3}{2}})+L^4(B^{\f{N-3}{2}})$.} We
recall that
\begin{equation*}
\begin{split}
\partial_t\O^{n+1}&=-\u^n\cdot\nabla\O^{n+1}+\ov{\mu}\D\O^{n+1}+\Lambda((E^{n+1})^\top-E^{n+1})+\u^n\cdot\nabla\O^n\\&\quad-\Lambda^{-1}\mathrm{curl}\left(\u^n\cdot\nabla\u^n+K(\r^n)\nabla
\r^n+\f{\r^n}{1+\r^n}\mathcal{A}\u^n-E^n_{jk}\partial_{x_j}E_{ik}^n\right).
\end{split}
\end{equation*}
Similar to the argument in Step 3, we conclude that
$$-\u^n\cdot\nabla d^{n+1}+\nu\D d^{n+1}+\u^n\cdot\nabla
d^n-\Lambda^{-1}\textrm{curl}\left(\u^n\cdot\nabla\u^n+\f{\r^n}{1+\r^n}\mathcal{A}\u^n\right)$$
is uniformly bounded in $L^{\f{4}{3}}(B^{\f{N-3}{2}})$, using the
uniform bound of $\{\r^n\}_{n\in\mathbb{N}}$ in
$L^\infty(B^{\f{N}{2}})\cap L^2(B^{\f{N}{2}})$. Also,  similarly to Step 3,  $\{\Lambda
E^{n+1}\}_{n\in\mathbb{N}}$ and
$\{E^n_{jk}\partial_{x_j}E_{ik}^n\}_{n\in\mathbb{N}}$ are
uniformly bounded in $L^4(B^{\f{N-3}{2}})$. Hence, $\{\partial_t
\O^n\}_{n\in\mathbb{N}}$ is uniformly bounded in
$L^{\f{4}{3}}(B^{\f{N}{2}-\f{3}{2}})$
$+L^4(B^{\f{N}{2}-\f{3}{2}})$.

{\bf Step 5: Uniform bound as $N\ge 3$.} Indeed, in this case, the
only difference is the uniform bound on $\u^n$. Actually,
from the uniform bounds on $\{\u^n\}_{n\in\mathbb{N}}$ in
$L^\infty(B^{\f{N}{2}-1})\cap L^2(B^{\f{N}{2}})$, we deduce that
$\{\u^n\cdot\nabla\u^n\}_{n\in\mathbb{N}}$ is uniformly bounded in
$L^2(B^{\f{N}{2}-2})$. Then, following the same argument in Step 3
and Step 4, we can deduce that
$\{\partial_td^n\}_{n\in\mathbb{N}}$ and
$\{\partial_t\O^n\}_{n\in\mathbb{N}}$ are uniformly bounded in
$L^2(B^{\f{N}{2}-2})+L^\infty(B^{\f{N}{2}-2})$, because
$\{\Lambda\r^n\}_{n\in\mathbb{N}}$, $\{\Lambda
E^{n+1}\}_{n\in\mathbb{N}}$ and
$\{E^n_{jk}\partial_{x_j}E_{ik}^n\}_{n\in\mathbb{N}}$ are
uniformly bounded in $L^\infty(B^{\f{N}{2}-2})$. This further
implies that $\{\partial_t\u^n\}_{n\in\mathbb{N}}$ is uniformly
bounded in $L^2(B^{\f{N}{2}-2})+L^\infty(B^{\f{N}{2}-2})$, and
hence is uniformly bounded in $C^{\f{1}{2}}_{loc}(\R^+;
B^{\f{N}{2}-2})$.
\end{proof}

We can now turn to the proof of the existence in  Theorem \ref{mt}, using
a compactness argument. To this end, we denote
$\{\chi_p\}_{p\in\mathbb{N}}$ be a sequence of $C_0^\infty(\R^N)$
cut-off functions supported in the ball $B(0, p+1)$ in $\R^N$ and
equal to $1$ in a neighborhood of the ball $B(0,p)$.
For any $p\in\mathbb{N}$, Lemma \ref{ub} tells us that
$\{(\chi_p\r^n, \chi_p\u^n, \chi_p E^n)\}_{n\in\mathbb{N}}$ is
uniformly equicontinuous in $C(\R^+; B^{\f{N}{2}-1}\times
(B^{\f{N-3}{2}})^N\times (B^{\f{N}{2}-1})^{N\times N})$. Notice
that the operator $f\mapsto \chi_p f$ is compact from
$B^{\f{N}{2}-1}\cap B^{\f{N}{2}}$ into $\dot{H}^{\f{N}{2}-1}$, and
from $B^{\f{N}{2}-1}\cap B^{\f{N-3}{2}}$ into
$\dot{H}^{\f{N-3}{2}}$. This can be proved easily by noticing that
$f\mapsto \chi_p f$ is compact from $\dot{H}^s\cap\dot{H}^{s'}$
into $\dot{H}^s$ for $s<s'$ and $B^s\hookrightarrow\dot{H}^s$. We
now apply the Ascoli-Arzela theorem to the sequence
$\{(\chi_p\r^n, \chi_p\u^n, \chi_p E^n)\}_{n\in\mathbb{N}}$ on the
time interval $[0,p]$. We then use Cantor's diagonal process. This
finally provides us with a distribution $(\r,\u,E)$ belonging to
$C(\R^+;
\dot{H}^{\f{N}{2}-1}\times(\dot{H}^{\f{N-3}{2}})^N\times(\dot{H}^{\f{N}{2}-1})^{N\times
N})$ and a subsequence (which we still denote by $\{(\r^n, \u^n,
E^n)\}_{n\in\mathbb{N}}$), such that, for all $p\in\mathbb{N}$, we
have
\begin{equation}\label{cs}
\begin{split}
(\chi_p\r^n, \chi_p\u^n, \chi_pE^n)\rightarrow (\chi_p\r,
\chi_p\u, \chi_pE)
\end{split}
\end{equation}
in
$C([0,p];\dot{H}^{\f{N}{2}-1}\times(\dot{H}^{\f{N-3}{2}})^N\times(\dot{H}^{\f{N}{2}-1})^{N\times
N})$. In particular, this implies that $(\r^n,\u^n, E^n)$ tends to
$(\r,\u, E)$ in $\mathcal{D}'(\R^+\times\R^N)$ as $n\rightarrow
\infty$. Furthermore, according to the uniform bounds in the
previous subsection, we deduce that $(\r, \u, E)$ belongs to
\begin{equation*}
\begin{split}
&L^\infty\left(\R^+;
\tilde{B}^{\f{N}{2}-1,\f{N}{2}}\times(B^{\f{N}{2}-1})^N\times(\tilde{B}^{\f{N}{2}-1,\f{N}{2}})^{N\times
N}\right)\\&\quad\cap L^1\left(\R^+;
\tilde{B}^{\f{N}{2}+1,\f{N}{2}}\times(B^{\f{N}{2}+1})^N\times(\tilde{B}^{\f{N}{2}+1,\f{N}{2}})^{N\times
N}\right),
\end{split}
\end{equation*}
and belongs to $C^{\f{1}{2}}(\R^+; B^{\f{N}{2}-1})\times
(C^{\f{1}{4}}(\R^+; B^{\f{N-3}{2}}))^N\times(C^{\f{1}{2}}(\R^+;
B^{\f{N}{2}-1}))^{N\times N}$ (and also belongs to
$C^{\f{1}{2}}(\R^+; B^{\f{N}{2}-1}\times (B^{\f{N}{2}-2})^N\times
(B^{\f{N}{2}-1})^{N\times N})$ if $N\ge 3$). And, obviously, we
have the bounds provided by $(\mathfrak{P_n})$ for this solution.
More important, we can claim that the whole sequence is a Cauchy
sequence in $\mathfrak{B}^{\f{N}{2}}$ as the initial data is such
small that $2C\Gamma\gamma<\f{1}{2}$. Indeed, if we denote
$$U^n=\|(\r^n-\r^{n-1}, \u^n-\u^{n-1},
E^n-E^{n-1})\|_{\mathfrak{B}^{\f{N}{2}}},$$ then applying
Proposition \ref{p2} and following the estimates in the previous
subsection, we can deduce that $U^n$ satisfies
$$U^n(t)\le a_n+\f{1}{2}U^{n-1}(t),\quad\textrm{for all}\quad t\in
\R^+,$$ where $a_n\ge 0$, obtained from the initial data,
satisfies $\sum_{n\in\mathbb{N}}a_n<\infty$. This inequality
easily ensure that the sequence $\{(\r^n, \u^n,
E^n)\}_{n\in\mathbb{N}}$ is a Cauchy sequence in
$\mathfrak{B}^{\f{N}{2}}$.

Next, we need to prove that $(\r,\u, E)$ obtained above solves
\eqref{e1e}. To this end, we first observe that, according to
\eqref{6401},
\begin{equation}\label{413}
\begin{cases}
\partial_t \r^{n+1}+\u^n\cdot\nabla \r^{n+1}+\Dv \u^{n+1}=-\r^n\Dv\u^n,\\
\partial_t\u^{n+1}+\u^n\cdot\nabla\u^{n+1}-\mathcal{A}\u^{n+1}+\nabla\r^{n+1}-\Dv E^{n+1}\\\quad=K(\r^n)\nabla\r^n+\f{\r^n}{1+\r^n}\mathcal{A}\u^n+\Dv((E^n)(E^n)^\top),\\
\partial_t E^{n+1}+\u^n\cdot\nabla E^{n+1}+\nabla\u^n=\nabla\u^n E^{n+1}.\\
\end{cases}
\end{equation}
Hence, the only problem now is to pass to the limit in
$\mathcal{D}'(\R^+\times\R^N)$ for each term in \eqref{413},
especially for those nonlinear terms. Let $\theta\in
C_0^\infty(\R^+\times\R^N)$ and $p\in\mathbb{N}$ be such that
$\textrm{supp }\theta\subset[0,p]\times B(0,p)$. For the
convergence of $\f{\r^n}{1+\r^n}\mathcal{A}\u^n$, we write
\begin{equation*}
\begin{split}
\theta\left(\f{\r^n}{1+\r^n}\mathcal{A}\u^n-\f{\r}{1+\r}\mathcal{A}\u\right)&=\theta\f{\r^n}{1+\r^n}\chi_p\mathcal{A}(\chi_p\u^n-\chi_p\u)
\\&\quad+\theta\left(\f{\chi_p\r^n}{1+\chi_p\r^n}-\f{\chi_p\r}{1+\chi_p\r}\right)\chi_p\mathcal{A}\u.
\end{split}
\end{equation*}
Since $\theta \f{\r^n}{1+\r^n}$ is uniformly bounded in
$L^\infty(B^{\f{N}{2}})\subset L^\infty(\dot{H}^{\f{N}{2}})$ and
$\chi_p\u^n$ tends to $\chi_p\u$ in
$C([0,p];\dot{H}^{\f{N-3}{2}})$, the first term in the above
identity tends to $0$ in $C([0,p];\dot{H}^{\f{N-3}{2}})$, while
$\f{\chi_p\r^n}{1+\chi_p\r^n}$ tends to $\f{\chi_p\r}{1+\chi_p\r}$
in $C([0,p]; \dot{H}^{\f{N}{2}-1})$ by Lemma \ref{l11}, which
implies that the third term also tends to $0$ in $C([0,p];
\dot{H}^{\f{N-3}{2}})$. The convergence of other nonlinear terms
can be treated similarly.

Finally, we now prove the continuity of $\r$ and $E$ in time.

\begin{Lemma} Let $(\r,\u, E)$ be a solution to \eqref{e1e}. Then $\r$ and $E$ are continuous in
$\tilde{B}^{\f{N}{2}-1,\f{N}{2}}$, and $\u$ belongs to $C(\R^+;
B^{\f{N}{2}-1})$.
\end{Lemma}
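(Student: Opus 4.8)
The plan is to split the argument along the structure of \eqref{x64}: the parabolic unknowns $d,\O$ (hence $\u$) will be handled by the smoothing and strong continuity of the heat flow, while the damped‑transported unknowns $\r$ and $E$ will be handled by the classical continuity theory for transport equations in critical hybrid Besov spaces. In both cases all source terms are controlled by the uniform bounds already at our disposal, together with Propositions \ref{p3}, \ref{p2} and Lemma \ref{l222}.

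First I would treat the velocity. Since $\u=-\Lambda^{-1}\na d+\Lambda^{-1}\textrm{curl}\,\O$ and $\Lambda^{-1}\na$, $\Lambda^{-1}\textrm{curl}$ are homogeneous Fourier multipliers of degree $0$ (hence bounded on $B^{\f{N}{2}-1}$), it suffices to prove $d,\O\in C(\R^+;B^{\f{N}{2}-1})$. Both solve heat equations, $\partial_t d-\nu\D d=F_d$ and $\partial_t\O-\mu\D\O=F_\O$, with $d(0),\O(0)\in B^{\f{N}{2}-1}$. One checks that $F_d,F_\O\in L^1_{\textrm{loc}}(\R^+;B^{\f{N}{2}-1})$: the convection terms are bounded by $\|\u\|_{L^\infty(B^{\f{N}{2}-1})}\bigl(\|d\|_{L^1(B^{\f{N}{2}+1})}+\|\O\|_{L^1(B^{\f{N}{2}+1})}\bigr)$ via Proposition \ref{p2}; the linear terms $\Lambda\r$, $\Lambda E$ lie in $L^2_{\textrm{loc}}(B^{\f{N}{2}-1})$ because $\r,E\in L^\infty(\tilde{B}^{\f{N}{2}-1,\f{N}{2}})\cap L^1(\tilde{B}^{\f{N}{2}+1,\f{N}{2}})\subset L^2(B^{\f{N}{2}})$ (the interpolation already used in Section 5.1); and the remaining nonlinearities were bounded in $L^1(\R^+;B^{\f{N}{2}-1})$ there. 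Then in the Duhamel formula $d(t)=e^{\nu t\D}d(0)+\int_0^te^{\nu(t-\tau)\D}F_d(\tau)\,d\tau$, the first term lies in $C(\R^+;B^{\f{N}{2}-1})$ by strong continuity of the heat semigroup on $B^{\f{N}{2}-1}$ (using the density of $C_0^\infty$ from Proposition \ref{p3}, valid since $|\f{N}{2}-1|\le\f{N}{2}$), and the integral is continuous in $t$ by dominated convergence; likewise for $\O$. Hence $\u\in C(\R^+;B^{\f{N}{2}-1})$.

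Next, $\r$ solves the transport equation $\partial_t\r+\u\cdot\na\r=-\r\Dv\u-\Lambda d$, whose right‑hand side is in $L^1_{\textrm{loc}}(\R^+;\tilde{B}^{\f{N}{2}-1,\f{N}{2}})$: by the first bound of Proposition \ref{p2}, $\|\r\Dv\u\|_{\tilde{B}^{\f{N}{2}-1,\f{N}{2}}}\lesssim\|\r\|_{L^\infty}\|\Dv\u\|_{\tilde{B}^{\f{N}{2}-1,\f{N}{2}}}+\|\Dv\u\|_{L^\infty}\|\r\|_{\tilde{B}^{\f{N}{2}-1,\f{N}{2}}}$, and $\Dv\u,\Lambda d\in L^1(B^{\f{N}{2}})\subset L^1(\tilde{B}^{\f{N}{2}-1,\f{N}{2}})$. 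Since moreover $\na\u\in L^1_{\textrm{loc}}(B^{\f{N}{2}})\subset L^1_{\textrm{loc}}(L^\infty)$, $\r(0)\in\tilde{B}^{\f{N}{2}-1,\f{N}{2}}$ and $\r\in L^\infty_{\textrm{loc}}(\tilde{B}^{\f{N}{2}-1,\f{N}{2}})$, the standard continuity result for transport equations in hybrid Besov spaces (cf. \cite{RD, RD1, RD2}; both indices $\f{N}{2}-1$ and $\f{N}{2}$ lie in $(-\f{N}{2},1+\f{N}{2}]$) yields $\r\in C(\R^+;\tilde{B}^{\f{N}{2}-1,\f{N}{2}})$. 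The same applies to $E^\top-E$, solving $\partial_t(E^\top-E)+\u\cdot\na(E^\top-E)=-\Lambda\O+(\na\u E)^\top-\na\u E$ with right‑hand side in $L^1_{\textrm{loc}}(\tilde{B}^{\f{N}{2}-1,\f{N}{2}})$ thanks to $\Lambda\O\in L^1(B^{\f{N}{2}})$ and Lemma \ref{l222}; while $\mathcal{E}$ satisfies the transport‑free equation $\partial_t\mathcal{E}=-2\Lambda d-\Lambda^{-1}\partial_{x_i}\Lambda^{-1}\partial_{x_j}\big(\u\cdot\na(E_{ij}+E_{ji})\big)+\Lambda^{-1}\partial_{x_i}\Lambda^{-1}\partial_{x_j}\big((\na\u E)_{ij}+(\na\u E)_{ji}\big)$, whose right‑hand side is again in $L^1_{\textrm{loc}}(\tilde{B}^{\f{N}{2}-1,\f{N}{2}})$ (the operators $\Lambda^{-1}\partial_{x_i}\Lambda^{-1}\partial_{x_j}$ are of order $0$, and one rewrites $\u_k\partial_{x_k}E=\partial_{x_k}(\u_kE)-(\Dv\u)E$ to reuse the estimates of Section 5.1), so $\mathcal{E}(t)=\mathcal{E}(0)+\int_0^t\mathcal{E}_t(\tau)\,d\tau$ is continuous in time. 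By the norm equivalence \eqref{EE} this gives $E\in C(\R^+;\tilde{B}^{\f{N}{2}-1,\f{N}{2}})$.

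I expect the main obstacle to be the continuity of the \emph{high‑frequency} part of $\r$ and $E$, i.e. continuity with values in $B^{\f{N}{2}}$: the low‑frequency parts are automatically continuous — in fact $C^{\f{1}{2}}_{\textrm{loc}}$ — from the bounds on $\partial_t\r$, $\partial_t E$ in Lemma \ref{ub}, but the top index genuinely requires the transport continuity theory above, which in turn hinges on verifying that every source term lies in $L^1_{\textrm{loc}}$ of the critical hybrid space. This verification runs exactly parallel to the nonlinear estimates of Section 5.1 and rests on Propositions \ref{p3}, \ref{p2} and Lemma \ref{l222}; a small amount of care is needed because a few terms (those involving $\r E$ and $\na\u E$) are only $L^1$ on bounded time intervals, which is nevertheless enough for continuity.
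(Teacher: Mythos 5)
Your overall strategy coincides with the paper's: show that the time derivatives (equivalently, the source terms of the transport equations for $\r$ and for the components of $E$) lie in $L^1_{loc}$ of the critical spaces, and then invoke continuity of transport/heat flows. The differences are mostly presentational. For $\u$ the paper does not go through Duhamel's formula for $d$ and $\O$: it writes the momentum equation for $\u$ itself, observes that the whole right-hand side (including $\mathcal{A}\u$, which is in $L^1(B^{\f{N}{2}-1})$ because $\u\in L^1(B^{\f{N}{2}+1})$) belongs to $L^1(B^{\f{N}{2}-1})+L^2(B^{\f{N}{2}-1})$, and integrates in time; this avoids any appeal to strong continuity of the heat semigroup. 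For the top index $B^{\f{N}{2}}$ of $\r$ and $E$, where you cite the transport-continuity theory of \cite{RD,RD1,RD2}, the paper proves the statement by hand: it applies $\D_q$ to the transport equation, notes that each $\D_q\r$ is continuous with values in $L^2$, and runs the dyadic energy estimate of Lemma \ref{cl} to bound $2^{q\f{N}{2}}\|\D_q\r(t_2)\|_{L^2}$ by $2^{q\f{N}{2}}\|\D_q\r(t_1)\|_{L^2}$ plus quantities summable in $q$ and integrable in time. The paper also treats $E$ directly rather than through the pair $(E^\top-E,\mathcal{E})$ and \eqref{EE}. Both routes deliver the lemma.

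Two steps are wrong as written, though both are repairable with tools you already use. First, $B^{\f{N}{2}}\not\hookrightarrow\tilde{B}^{\f{N}{2}-1,\f{N}{2}}$: since $\f{N}{2}-1<\f{N}{2}$, the Remark in Section 2 gives $\tilde{B}^{\f{N}{2}-1,\f{N}{2}}=B^{\f{N}{2}-1}\cap B^{\f{N}{2}}$, and membership in $B^{\f{N}{2}}$ controls nothing about the low-frequency half of the $B^{\f{N}{2}-1}$ norm (for $q\le 0$ one has $2^{q(\f{N}{2}-1)}\ge 2^{q\f{N}{2}}$). So the claim ``$\Dv\u,\Lambda d\in L^1(B^{\f{N}{2}})\subset L^1(\tilde{B}^{\f{N}{2}-1,\f{N}{2}})$'' is not a valid inclusion; you must add the interpolated bound $\u,d\in L^2(B^{\f{N}{2}})$ to control the low frequencies, which makes these terms only $L^1$ on bounded time intervals --- sufficient for continuity, as you note for other terms. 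Second, some of your product estimates are stated at an endpoint that Proposition \ref{p2} excludes when $N=2$: the first item requires $s_1,s_2>0$, whereas $\f{N}{2}-1=0$ for $N=2$, and the second item applied to $\u\cdot\nabla d$ with $\u\in B^{\f{N}{2}-1}$, $\nabla d\in B^{\f{N}{2}}$ needs $\min\{s_1+t_1,s_2+t_2\}=N-2>0$. In both cases the fix is to trade regularity via interpolation (e.g.\ $\u\in L^2(B^{\f{N}{2}})$ against $\nabla d\in L^2(B^{\f{N}{2}-1})$), or simply to reuse the paper's own estimates \eqref{44} and \eqref{47}. With these repairs the argument is sound.
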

\begin{proof} To prove this, we follow the argument in \cite{RD1}.
Indeed, the continuity of $\u$ is straightforward, because $\u$
satisfies
$$\partial_t\u=-\u\cdot\nabla\u+\mathcal{A}\u-\nabla\r-\left(\f{\r}{1+\r}\right)\mathcal{A}\u-K(\r)\nabla\r+E_{jk}\nabla_{x_j}E_{ik},$$
and the right-hand side belongs to
$L^1(B^{\f{N}{2}-1})+L^2(B^{\f{N}{2}-1})$ due to the facts that
$\r\in L^2(B^{\f{N}{2}-1})$ and $E\in L^2(B^{\f{N}{2}-1})$.

To prove  $\r,E\in C(\R^+; B^{\f{N}{2}-1})$, we notice that,
$\r_0, E_0\in B^{\f{N}{2}-1}$, $\r, E\in L^\infty(\R^+;
B^{\f{N}{2}-1})$ and $\partial_t\r, \partial_tE\in L^2(\R^+;
B^{\f{N}{2}-1})$. Thus, it remains to prove the continuity in time
of $\r, E$ in $B^{\f{N}{2}}$.
To this end, we apply the operator $\D_q$ to the first equation of
\eqref{e1e} to yield
\begin{equation}\label{414}
\begin{split}
\partial_t\D_q\r=-\D_q(\u\cdot\nabla\r)-\Lambda\D_q
d-\D_q(\r\Dv\u).
\end{split}
\end{equation}
Obviously, for fixed $q$, the right-hand side belongs to
$L^1(\R^+; L^2)$. Hence, each $\D_q\r$ is continuous in time with
values in $L^2$ (thus in $B^{\f{N}{2}}$).
Now, applying an energy method to \eqref{414}, thanks to Lemma
\ref{cl}, we obtain
\begin{equation*}
\begin{split}
\f{1}{2}\f{d}{dt}\|\D_q\r\|_{L^2}^2\le
C\|\D_q\r\|_{L^2}\Big(\alpha_q
2^{-q\f{N}{2}}\|\r\|_{B^{\f{N}{2}}}\|\u\|_{B^{\f{N}{2}+1}}+\|\Lambda\D_qd\|_{L^2}+\|\D_q(\r\Dv\u)\|_{L^2}\Big).
\end{split}
\end{equation*}
Integrating the above inequality with respect to time on the
interval $[t_1, t_2]$, we obtain
\begin{equation*}
\begin{split}
2^{q\f{N}{2}}\|\D_q\r(t_2)\|_{L^2}&\le
2^{q\f{N}{2}}\|\D_q\r(t_1)\|_{L^2}+C\int_{t_1}^{t_2}\Big(\alpha_q(\tau)\|\r(\tau)\|_{B^{\f{N}{2}}}\|\u(\tau)\|_{B^{\f{N}{2}+1}}\\&\qquad+2^{q(\f{N}{2}+1)}\|\D_qd(\tau)\|_{L^2}
+2^{q\f{N}{2}}\|\D_q(\r\Dv\u)(\tau)\|_{L^2}\Big)d\tau
\end{split}
\end{equation*}
Since $\r\in L^\infty(B^{\f{N}{2}})$, $\u\in L^1(B^{\f{N}{2}+1})$,
and $\r\Dv\u\in L^1(B^{\f{N}{2}})$, we eventually obtain
\begin{equation*}
\begin{split}
\|\r(t_2)\|_{B^{\f{N}{2}}}\lesssim\|\r(t_1)\|_{B^{\f{N}{2}}}+(1+\|\r\|_{L^\infty(B^{\f{N}{2}})})\int_{t_1}^{t_2}\|\u(\tau)\|_{B^{\f{N}{2}+1}}d\tau+\int_{t_1}^{t_2}\|\r\Dv\u(\tau)\|_{B^{\f{N}{2}}}d\tau,
\end{split}
\end{equation*}
which implies that $\r$ belongs to $C(\R^+; B^{\f{N}{2}})$.

Similarly, we can prove that $E$ also belongs to $C(\R^+;
B^{\f{N}{2}})$.

This finishes our proof.
\end{proof}

\bigskip\bigskip

\section{Appendix}

For the completeness of the presentation, we give in this appendix
the proof of two fundamental lemmas concerning the divergence and
curl of the deformation gradient in the viscoelasticity system
\eqref{e1e}.

The first one we are going to state now is the following lemma
(cf. Proposition 3.1 in \cite{LZ}).

\begin{Lemma}\label{div}
Assume that $\Dv(\r_0\F_0^\top)=0$ and $(\r, \u, \F)$ is the
solution of the system \eqref{e1e}. Then the following identity
\begin{equation}\label{Dv}
\Dv(\r \F^\top)=0
\end{equation}
holds for all time $t> 0$.
\end{Lemma}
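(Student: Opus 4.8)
The plan is to show that the quantity $m_j:=\partial_{x_i}(\r\F_{ij})$ (so that the assertion \eqref{Dv} is exactly $m_j\equiv0$ for each $j$) satisfies the \emph{same} continuity equation as the density $\r$, and then to conclude by propagating the vanishing of $m_j$ from the initial time.

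First I would put the evolution law \eqref{e1e3} for $\F$ into conservative form. Writing \eqref{e1e3} in components, $\partial_t\F_{ij}+\u_k\partial_{x_k}\F_{ij}=\partial_{x_k}\u_i\,\F_{kj}$, multiplying by $\r$, and combining with the continuity equation \eqref{e1e1}, the coefficient of $\F_{ij}$ collapses to $[\partial_t\r+\Dv(\r\u)]\F_{ij}=0$ and one is left with
\[
\partial_t(\r\F_{ij})+\partial_{x_k}\!\big(\u_k\,\r\F_{ij}\big)=\r\,\F_{kj}\,\partial_{x_k}\u_i
\qquad\text{for all }i,j .
\]

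Next I would apply $\partial_{x_i}$ to this identity. The time term becomes $\partial_t m_j$. In the flux term, $\partial_{x_i}\partial_{x_k}(\u_k\r\F_{ij})=\partial_{x_k}(\u_k m_j)+\partial_{x_k}\big(\r\F_{ij}\,\partial_{x_i}\u_k\big)$. On the right-hand side, $\partial_{x_i}\big(\r\F_{kj}\,\partial_{x_k}\u_i\big)$ expands into three terms which, after relabelling the two summation indices $i\leftrightarrow k$, agree one-by-one with the three terms of $\partial_{x_k}\big(\r\F_{ij}\,\partial_{x_i}\u_k\big)$; hence these contributions cancel and what survives is the clean identity
\[
\partial_t m_j+\Dv(\u\,m_j)=0 \qquad (1\le j\le N) .
\]

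Finally, since $m_j|_{t=0}=\partial_{x_i}\big(\r_0(\F_0)_{ij}\big)=0$ by the hypothesis $\Dv(\r_0\F_0^\top)=0$, I would conclude $m_j\equiv0$ for all $t>0$. Reading the last equation along the flow $\dot X=\u(X,t)$ it becomes $\frac{d}{dt}m_j(X(t),t)=-(\Dv\u)(X(t),t)\,m_j(X(t),t)$, a linear homogeneous ODE whose solution with zero initial value is identically zero; alternatively one invokes uniqueness for the linear continuity equation with the given velocity field. The only point requiring care is the index bookkeeping in the cancellation step — keeping the summation conventions consistent so that each of the three terms coming from the right-hand side matches the corresponding term in the flux — together with the mild regularity needed to differentiate \eqref{e1e3} and to run the characteristics argument; for the strong solutions of \eqref{e1e} considered here this is available, and otherwise one establishes the identity first for smooth approximate solutions and passes to the limit.
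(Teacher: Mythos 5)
Your proof is correct and follows essentially the same route as the paper: both arguments reduce the lemma to the transport identity $\partial_t\bigl(\Dv(\r\F^\top)\bigr)+\Dv\bigl(\u\otimes\Dv(\r\F^\top)\bigr)=0$ (equation \eqref{65}) and then propagate the vanishing initial data. The only differences are cosmetic — you obtain \eqref{65} a bit more cleanly by first putting the $\F$-equation in conservative form for $\r\F$ and then taking a single divergence, rather than adding the paper's two separate identities \eqref{62} and \eqref{64}, and you close the argument by integrating along characteristics where the paper uses an $L^2$ energy estimate with Gronwall's inequality; both conclusions need the same regularity ($\nabla\u\in L^1_tL^\infty_x$), which is available here.
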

\begin{proof}
First, we transpose the third equation in \eqref{e1e} and apply
the divergence operator to the resulting equation to yield
\begin{equation}\label{61}
\partial_t(\partial_{x_j}\F_{ji})+\u\cdot\nabla(\partial_{x_j}\F_{ji})=\left(\f{\partial^2\u_j}{\partial{x_k}\partial{x_j}}\right)\F_{ki}.
\end{equation}
Multiply the first equation in \eqref{e1e} by
$\partial_{x_j}\F_{ji}$, multiply \eqref{61} by $\r$, and summing
them together, we obtain
\begin{equation}\label{62}
\partial_t(\r\partial_{x_j}\F_{ji})+\u\cdot\nabla(\r\partial_{x_j}\F_{ji})=\r\left(\f{\partial^2\u_j}{\partial{x_k}\partial{x_j}}\right)\F_{ki}
-\r\partial_{x_k}\u_k\partial_{x_j}\F_{ji}.
\end{equation}

On the other hand, we differentiate the first equation in
\eqref{e1e} with respect to $x_j$ to yield
\begin{equation}\label{63}
\partial_t(\partial_{x_j}\r)+\u\cdot\nabla(\partial_{x_j}\r)+\partial_{x_j}\u_k\partial_{x_k}\r+\f{\partial^2\u_k}{\partial
x_k\partial x_j}\r+\partial_{x_j}\r\partial_{x_k}\u_k=0.
\end{equation}
Multiplying \eqref{63} by $\F_{ji}$, multiplying the third
equation in \eqref{e1e} by $\partial_{x_j}\r$, and summing them
together, we obtain
\begin{equation}\label{64}
\partial_t(\partial_{x_j}\r\F_{ji})+\u\cdot\nabla(\partial_{x_j}\r\F_{ji})=-\r\left(\f{\partial^2\u_k}{\partial{x_k}\partial{x_j}}\right)\F_{ji}
-\partial_{x_j}\r\partial_{x_k}\u_k\F_{ji}.
\end{equation}

Adding \eqref{62} and \eqref{64} together yields
\begin{equation}\label{65}
\partial_t(\Dv(\r\F^\top))+\Dv(\u\otimes\Dv(\r\F^\top))=0.
\end{equation}

If $(\r, \u, \F)$ is sufficiently smooth, we multiply \eqref{65}
by $\Dv(\r\F^\top)$, we get
$$\partial_t\left(\left|\Dv(\r\F^\top)\right|^2\right)+\Dv\left(\u\left|\Dv(\r\F^\top)\right|^2\right)=-\f{1}{2}\Dv\u\left|\Dv(\r\F^\top)\right|^2.$$
Integrating the above identity with respect to $x$ over $\R^N$, we
obtain
\begin{equation*}
\begin{split}
\f{d}{dt}\left\|\Dv(\r\F^\top)\right\|_{L^2}^2&=-\f{1}{2}\int_{\R^N}\Dv\u\left|\Dv(\r\F^\top)\right|^2
dx\\
&\le
\f{1}{2}\|\nabla\u\|_{L^\infty}\left\|\Dv(\r\F^\top)\right\|_{L^2}^2,
\end{split}
\end{equation*}
which, by Gronwall's inequality, implies that, for all $t\ge 0$
$$\left\|\Dv(\r\F^\top)(t)\right\|_{L^2}^2\le\left\|\Dv(\r_0\F_0^\top)\right\|_{L^2}^2e^{\f{1}{2}\int_0^t\|\nabla\u(\tau)\|_{L^\infty}d\tau}.$$
Hence, if $\Dv(\r_0\F_0^\top)=0$, the above inequality will gives
$\|\Dv(\r\F^\top)\|_{L^2}=0$ for all $t>0$, which implies that
$\Dv(\r\F^\top)=0$ for all $t>0$.

This finishes the proof.
\end{proof}

Another hidden, but important, property of the viscoelastic fluids
system \eqref{e1e} is concerned with the curl of the deformation
gradient (for the incompressible case, see \cite{LLZH2, LLZH}).
Actually, the following lemma says that the curl of the
deformation gradient is of higher order.

\begin{Lemma}\label{curl}
Assume that \eqref{e1e3} is satisfied and $(\u, \F)$ is the
solution of the system \eqref{e1e}. Then the following identity
\begin{equation}\label{curl1}
\F_{lk}\nabla_l \F_{ij}=\F_{lj}\nabla_l \F_{ik}
\end{equation}
holds for all time $t> 0$ if it initially satisfies \eqref{curl1}.
\end{Lemma}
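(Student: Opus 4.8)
The plan is to imitate the argument used for Lemma \ref{div}. Introduce the ``curl defect'' tensor
$$G_{ijk}:=\F_{lk}\nabla_l\F_{ij}-\F_{lj}\nabla_l\F_{ik},$$
derive a closed \emph{linear} transport equation for $G$ of the form $(\partial_t+\u\cdot\nabla)G_{ijk}=(\partial_{x_m}\u_i)\,G_{mjk}$, and then conclude by a Gronwall estimate on $\|G\|_{L^2}^2$ that $G\equiv 0$ for all $t>0$ whenever $G|_{t=0}=0$. Note that the hypothesis of the lemma is exactly $G|_{t=0}=0$, and that the corresponding assumption in Theorem \ref{mt} is stated in the same way via $\F_{lk}(0)\nabla_l\F_{ij}(0)=\F_{lj}(0)\nabla_l\F_{ik}(0)$.

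First I would write \eqref{e1e3} in components as $\partial_t\F_{ij}+\u_m\partial_{x_m}\F_{ij}=(\partial_{x_m}\u_i)\F_{mj}$ and differentiate it in $x_l$ to obtain the evolution of $\nabla_l\F_{ij}$ along the flow:
$$(\partial_t+\u\cdot\nabla)\partial_{x_l}\F_{ij}=-(\partial_{x_l}\u_m)\partial_{x_m}\F_{ij}+(\partial_{x_l}\partial_{x_m}\u_i)\F_{mj}+(\partial_{x_m}\u_i)\partial_{x_l}\F_{mj}.$$
Combining this with the equation for $\F_{lk}$ itself, a direct computation of $(\partial_t+\u\cdot\nabla)(\F_{lk}\partial_{x_l}\F_{ij})$ shows that the two terms quadratic in first derivatives of $\u$ and $\F$ cancel after relabeling the dummy indices $l$ and $m$, leaving
$$(\partial_t+\u\cdot\nabla)(\F_{lk}\partial_{x_l}\F_{ij})=\F_{lk}\F_{mj}\,\partial_{x_l}\partial_{x_m}\u_i+(\partial_{x_m}\u_i)\,\F_{lk}\partial_{x_l}\F_{mj}.$$
Writing the analogous identity with $j$ and $k$ interchanged and subtracting, the second-order term $(\F_{lk}\F_{mj}-\F_{lj}\F_{mk})\partial_{x_l}\partial_{x_m}\u_i$ vanishes because its coefficient is antisymmetric in $(l,m)$ while $\partial_{x_l}\partial_{x_m}\u_i$ is symmetric, and the remaining terms assemble into $(\partial_{x_m}\u_i)\,G_{mjk}$. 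This yields the desired transport equation for $G$.

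Finally, assuming $(\r,\u,\F)$ is smooth enough (as in Lemma \ref{div}), I would take the $L^2$-inner product of the transport equation with $G_{ijk}$, sum over the indices, integrate by parts in the convection term to produce $\tfrac12\int_{\R^N}\Dv\u\,|G|^2\,dx$, and bound the right-hand side by $C\|\nabla\u\|_{L^\infty}\|G\|_{L^2}^2$. Gronwall's inequality then gives $\|G(t)\|_{L^2}^2\le\|G(0)\|_{L^2}^2\exp\big(C\int_0^t\|\nabla\u(\tau)\|_{L^\infty}\,d\tau\big)$, so $G(0)=0$ forces $G(t)=0$, i.e.\ \eqref{curl1} holds for all $t>0$. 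The only real obstacle is the index bookkeeping in the second step: verifying that the $\nabla\u\cdot\nabla\F$ terms cancel and that the Hessian-of-$\u$ terms drop out by the symmetry/antisymmetry argument. Once that algebra is in place the conclusion is immediate and parallels the proof of Lemma \ref{div}.
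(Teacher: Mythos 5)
Your proposal is correct and follows essentially the same route as the paper: the same curl-defect tensor $G_{ijk}=\F_{lk}\nabla_l\F_{ij}-\F_{lj}\nabla_l\F_{ik}$, the same cancellations (relabeling the dummy indices $l$ and $m$ to kill the terms quadratic in first derivatives, and the symmetry of $\nabla_l\nabla_m\u_i$ against the antisymmetry of $\F_{lk}\F_{mj}-\F_{lj}\F_{mk}$ to kill the Hessian terms), arriving at the same closed transport equation $(\partial_t+\u\cdot\nabla)G_{ijk}=\nabla_m\u_i\,G_{mjk}$. The only (harmless) deviation is the final step: you close with an $L^2$ Gronwall estimate in the style of Lemma \ref{div}, whereas the paper bounds the pointwise maximum $\mathcal{M}=\max_{i,j,k}|G_{ijk}|^2$ along characteristics; both yield $G\equiv 0$ under the same smoothness assumptions.
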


\begin{proof}
First, we establish the evolution equation for the equality
$\F_{lk}\nabla_l \F_{ij}-\F_{lj}\nabla_l \F_{ik}$. Indeed, by the
equation \eqref{e1e3}, we can get
\begin{equation*}
\partial_t\nabla_l\F_{ij}+\u\cdot\nabla\nabla_l
\F_{ij}+\nabla_l\u\cdot\nabla\F_{ij}=\nabla_m\u_i\nabla_l\F_{mj}+\nabla_l\nabla_m\u_i
\F_{mj}.
\end{equation*}
Thus,
\begin{equation}\label{curl2}
\F_{lk}(\partial_t\nabla_l\F_{ij}+\u\cdot\nabla\nabla_l
\F_{ij})+\F_{lk}\nabla_l\u\cdot\nabla\F_{ij}=\F_{lk}\nabla_m\u_i\nabla_l\F_{mj}+\F_{lk}\nabla_l\nabla_m\u_i
\F_{mj}.
\end{equation}
Also, from \eqref{e1e3}, we obtain
\begin{equation}\label{curl3}
\nabla_l\F_{ij}(\partial_t\F_{lk}+\u\cdot\nabla\F_{lk})=\nabla_l\F_{ij}\nabla_m\u_l\F_{mk}.
\end{equation}

Now, adding \eqref{curl2} and \eqref{curl3}, we deduce that
\begin{equation}\label{curl4}
\begin{split}
\partial_t(\F_{lk}\nabla_l \F_{ij})+\u\cdot\nabla(\F_{lk}\nabla_l
\F_{ij})&=-\F_{lk}\nabla_l\u\cdot\nabla\F_{ij}+\F_{lk}\nabla_m\u_i\nabla_l\F_{mj}\\&\quad+\F_{lk}\nabla_l\nabla_m\u_i
\F_{mj}+\nabla_l\F_{ij}\nabla_m\u_l\F_{mk}\\
&=\F_{lk}\nabla_m\u_i\nabla_l\F_{mj}+\F_{lk}\nabla_l\nabla_m\u_i
\F_{mj}.
\end{split}
\end{equation}
Here, we used the identity which is derived by interchanging the
roles of indices $l$ and $m$:
$$\F_{lk}\nabla_l\u\cdot\nabla\F_{ij}=\F_{lk}\nabla_l\u_m\nabla_m\F_{ij}=\nabla_l\F_{ij}\nabla_m\u_l\F_{mk}.$$
Similarly, one has
\begin{equation}\label{curl5}
\begin{split}
\partial_t(\F_{lj}\nabla_l \F_{ik})+\u\cdot\nabla(\F_{lj}\nabla_l
\F_{ik})=\F_{lj}\nabla_m\u_i\nabla_l\F_{mk}+\F_{lj}\nabla_l\nabla_m\u_i
\F_{mk}.
\end{split}
\end{equation}
Subtracting \eqref{curl5} from \eqref{curl4} yields
\begin{equation}\label{curl6}
\begin{split}
&\partial_t(\F_{lk}\nabla_l \F_{ij}-\F_{lj}\nabla_l
\F_{ik})+\u\cdot\nabla(\F_{lk}\nabla_l \F_{ij}-\F_{lj}\nabla_l
\F_{ik})\\&\quad=\nabla_m\u_i(\F_{lk}\nabla_l\F_{mj}-\F_{lj}\nabla_l\F_{mk})+\nabla_l\nabla_m\u_i
(\F_{mj}\F_{lk}-\F_{mk}\F_{lj}).
\end{split}
\end{equation}
Due to the fact
$$\nabla_l\nabla_m\u_i=\nabla_m\nabla_l\u_i$$ in the sense of distributions, we
have, again by interchanging the roles of indices $l$ and $m$,
\begin{equation*}
\begin{split}
\nabla_l\nabla_m\u_i
(\F_{mj}\F_{lk}-\F_{mk}\F_{lj})&=\nabla_l\nabla_m\u_i
\F_{mj}\F_{lk}-\nabla_l\nabla_m\u_i
\F_{mk}\F_{lj}\\
&=\nabla_l\nabla_m\u_i \F_{mj}\F_{lk}-\nabla_m\nabla_l\u_i
\F_{lk}\F_{mj}\\
&=(\nabla_l\nabla_m\u_i-\nabla_m\nabla_l\u_i) \F_{lk}\F_{mj}=0.
\end{split}
\end{equation*}
From this identity, equation \eqref{curl6} can be simplified as
\begin{equation}\label{curl7}
\begin{split}
&\partial_t(\F_{lk}\nabla_l \F_{ij}-\F_{lj}\nabla_l
\F_{ik})+\u\cdot\nabla(\F_{lk}\nabla_l \F_{ij}-\F_{lj}\nabla_l
\F_{ik})\\&\quad=\nabla_m\u_i(\F_{lk}\nabla_l\F_{mj}-\F_{lj}\nabla_l\F_{mk}).
\end{split}
\end{equation}
Multiplying \eqref{curl7} by $\F_{lk}\nabla_l
\F_{ij}-\F_{lj}\nabla_l \F_{ik}$, we get
\begin{equation}\label{curl8}
\begin{split}
&\partial_t|\F_{lk}\nabla_l \F_{ij}-\F_{lj}\nabla_l
\F_{ik}|^2+\u\cdot\nabla|\F_{lk}\nabla_l \F_{ij}-\F_{lj}\nabla_l
\F_{ik}|^2\\&\quad=2(\F_{lk}\nabla_l \F_{ij}-\F_{lj}\nabla_l
\F_{ik})\nabla_m\u_i(\F_{lk}\nabla_l\F_{mj}-\F_{lj}\nabla_l\F_{mk})\\
&\quad\le 2\|\nabla\u\|_{L^\infty(\R^3)}\mathcal{M}^2,
\end{split}
\end{equation}
where $\mathcal{M}$ is defined as
$$\mathcal{M}=\max_{i,j,k}\{|\F_{lk}\nabla_l
\F_{ij}-\F_{lj}\nabla_l \F_{ik}|^2\}.$$ Hence, \eqref{curl8}
implies
\begin{equation}\label{curl9}
\partial_t\mathcal{M}+\u\cdot\nabla\mathcal{M}\le
2\|\nabla\u\|_{L^\infty(\R^3)}\mathcal{M}.
\end{equation}

On the other hand, the characteristics of $\partial_t
f+\u\cdot\nabla f=0$ is given by
$$\f{d}{ds}X(s)=\u(s,X(s)),\quad X(t)=x.$$
Hence, \eqref{curl8} can be rewritten as
\begin{equation}\label{curl10}
\f{\partial U}{\partial t}\le B(t,y)U,\quad
U(0,y)=\mathcal{M}_0(y),
\end{equation}
where
$$U(t,y)=\mathcal{M}(t, X(t,x)),\quad
B(t,y)=2\|\nabla\u\|_{L^\infty(\R^3)}(t, X(t,y)).$$ The
differential inequality \eqref{curl10} implies that
$$U(t,y)\le U(0)\exp\left(\int_0^t B(s,y)ds\right).$$ Hence,
$$\mathcal{M}(t,x)\le \mathcal{M}(0)\exp\left(\int_0^t
2\|\nabla\u\|_{L^\infty(\R^3)}(s)ds\right).$$ Hence, if
$\mathcal{M}(0)=0$, then $\mathcal{M}(t)=0$ for all $t>0$, and
 the proof of the lemma is complete.
\end{proof}

Using $\F=I+E$, \eqref{curl1} means
\begin{equation}\label{11111b}
\nabla_{k}E_{ij}+E_{lk}\nabla_{l}E_{ij}=\nabla_{j}E_{ik}+E_{lj}\nabla_{l}E_{ik}.
\end{equation}
According to \eqref{11111b}, it is natural to assume that the
initial condition of $E$ in the viscoelastic fluids system
\eqref{e1} should satisfy the compatibility condition
\begin{equation}\label{11113}
\nabla_{k}E(0)_{ij}+E(0)_{lk}\nabla_{l}E(0)_{ij}=\nabla_{j}E(0)_{ik}+E(0)_{lj}\nabla_{l}E(0)_{ik}.
\end{equation}

\bigskip

\section*{Acknowledgments}

Xianpeng Hu's research was supported in part by the National
Science Foundation grant DMS-0604362. Dehua Wang's research was
supported in part by the National Science Foundation under grants
DMS-0604362 and DMS-0906160, and by the Office of Naval Research
under Grant N00014-07-1-0668.

\bigskip

\end{document}